\newtheorem{thm}{Theorem}[section]
\newtheorem{cor}[thm]{Corollary}
\newtheorem{lem}[thm]{Lemma}
\newtheorem{prop}[thm]{Proposition}
\theoremstyle{definition}
\newtheorem{defn}[thm]{Definition}
\newtheorem{exam}[thm]{Example}
\newtheorem{rem}[thm]{Remark}
\numberwithin{equation}{section}
\DeclareMathOperator{\NN}{\mathbb {N}}
\DeclareMathOperator{\ZZ}{\mathbb {Z}}
\DeclareMathOperator{\lk}{lk}
\DeclareMathOperator{\height}{height}
\DeclareMathOperator{\depth}{depth}
\DeclareMathOperator{\pd}{pd}
\DeclareMathOperator{\supp}{supp}
\DeclareMathOperator{\reg}{reg}
\def\D {\Delta}
\def\M {\mathcal M}
\def\a {\mathbf a}
\def\b {\mathbf b}
\def\m {\mathfrak m}
\def\F {\mathfrak F}
\def\k {\mathrm{k}}
\def\h {\widetilde{H}}
\def\c {\mathbf{c}}
\def\ww {\mathbf{w}}
\begin{document}

\title{Depth and regularity of tableau ideals}

\author{Do Trong Hoang}
\address{School of Applied Mathematics and Informatics, Hanoi University of Science and Technology, 1 Dai Co Viet, Hai Ba Trung, Hanoi, Vietnam.}

\email{hoang.dotrong@hust.edu.vn}

\author{Thanh Vu}
\address{Institute of Mathematics, VAST, 18 Hoang Quoc Viet, Hanoi, Vietnam}
\email{vuqthanh@gmail.com}

\subjclass[2020]{13D02, 05E40, 13F55}
\keywords{depth; regularity; Young diagrams; tableau ideals}

\date{}

\dedicatory{Dedicated to Professor Brian Harbourne on the occasion of his 65th birthday}
\commby{}
\maketitle
\begin{abstract}
   We compute the depth and regularity of ideals associated with arbitrary fillings of positive integers to a Young diagram, called the tableau ideals.
\end{abstract}

\maketitle

\section{Introduction}
\label{sect_intro}
A Young diagram, also called a Ferrers diagram, is a collection of boxes arranged in left-justified rows with row lengths weakly decreasing. Any way of putting a positive integer in each box of a Young diagram will be called  a {\it filling}  of the diagram. A filling $Y$ is called weakly increasing if it is weakly increasing across each row and down each column of the diagram. (In standard terminology, a semistandard Young tableau requires strictly increasing down each column of the diagram). The shape of a tableau $Y$ is the corresponding partition $\lambda$ whose $\lambda_i$ is the number of boxes in the $i$th row of $Y$. Young tableaux play an important role in reprensentation theory of symmetric groups and general linear groups, as well as Schubert calculus. They also have wide applications in other fields (see \cite{Ful} for more detail).

For each Young diagram $\lambda$, Corso and Nagel \cite{CN} introduced and studied the so-called Ferrers ideal, $I_\lambda$. They described its minimal free resolution. To each filling $Y$, we also have a natural associated monomial ideal, which generalizes the Ferrers ideal. More precisely, assume that $\lambda = (\lambda_1,\ldots, \lambda_n)$ and $Y$ is the filling of $\lambda$ whose value at the $ij$ box is $w(i,j)$. The {\it tableau ideal} associated with $Y$ is the following ideal inside a standard graded polynomial ring $S = \k[x_1, \ldots, x_n,y_1,\ldots,y_m]$ over a field $\k$, where $m=\lambda_1$
$$I(Y) = \left ( (x_iy_j)^{w(i,j)} \mid 1\le i\le n, 1 \le j \le \lambda_i \right ).$$
The tableau ideal is the edge ideal of an edge-weighted graph introduced by Paulsen and Sather-Wagstaff \cite{PS}. Recently, there has been a surge of interest in characterizing weights for which the edge ideals of edge-weighted graphs are Cohen-Macaulay \cite{FSTY, DMV, Hi, W}. Nonetheless, computing the exact value of the depth and regularity of edge ideals of edge-weighted graphs is more subtle. In this paper, we give a simple recursive formula for the depth and regularity of $I(Y)$ for arbitrary fillings $Y$. Our work is the first result to compute the depth and regularity of edge ideals of a family of edge-weighted graphs for arbitrary weights.

To describe our main results, we first introduce some notation. Let 
$$\omega = \min \left \{ w(i,j) \mid 1 \le i \le n, 1 \le j \le \lambda_i  \right \}$$
be the minimum weight. A box $(\gamma,\delta)$ is called a {\it minimal box} of $Y$ if $w(\gamma,\delta) = \omega$ and $w(i,j) > \omega$ for all $(i,j) \neq (\gamma,\delta)$  such that $i \le \gamma$ and $j \le \delta$. We have

\begin{thm}\label{thm_depth_tableau_0}
    Let $Y$ be an arbitrary filling of a Young diagram $\lambda$. Let $(\gamma,\delta)$ be a minimal box of $Y$. Then 
    $$\depth S/I(Y) = \min \left \{ \depth (S/(I(Y) + (x_\gamma))), \depth (S/(I(Y) + (y_\delta))) \right \}.$$
\end{thm}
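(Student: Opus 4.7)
The plan is to deduce the formula from the two short exact sequences
\begin{equation*}
0 \to (S/(I(Y):x_\gamma))(-1) \xrightarrow{\cdot x_\gamma} S/I(Y) \to S/(I(Y)+(x_\gamma)) \to 0,
\end{equation*}
\begin{equation*}
0 \to (S/(I(Y):y_\delta))(-1) \xrightarrow{\cdot y_\delta} S/I(Y) \to S/(I(Y)+(y_\delta)) \to 0.
\end{equation*}
Writing $D := \depth S/I(Y)$ and $A_x, A_y, B_x, B_y$ for the depths of the other four quotients, these two sequences give six standard depth-lemma inequalities relating $D$, $A_x$, $A_y$, $B_x$, $B_y$. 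The objective is then to combine them with additional control on the colon depths $A_x$ and $A_y$ extracted from the minimality of $(\gamma,\delta)$.

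The core of the argument is a comparison between the colon depths and the sum depths. The colon ideal $I(Y):x_\gamma$ is obtained from $I(Y)$ by dropping the $x_\gamma$-exponent of each generator in row $\gamma$ by one; in particular, the generator at the minimal box becomes $x_\gamma^{\omega-1}y_\delta^\omega$, which has $y_\delta$-exponent exactly $\omega$. The minimality of $(\gamma,\delta)$ guarantees that every other generator of $I(Y):x_\gamma$ coming from a box $(i,\delta)$ with $i<\gamma$ has $y_\delta$-exponent strictly greater than $\omega$. I would exploit this asymmetry by re-applying the same exact-sequence machinery to $S/(I(Y):x_\gamma)$ with respect to $y_\delta$ (and symmetrically to $S/(I(Y):y_\delta)$ with respect to $x_\gamma$), reducing by induction on the total weight $\sum w(i,j)$ to smaller tableau-like ideals controlled by the inductive hypothesis. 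Feeding the resulting bounds on $A_x$ and $A_y$ back into the six depth-lemma inequalities then pins down $D = \min(B_x, B_y)$ by a short case analysis.

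The main obstacle is that $I(Y):x_\gamma$ is not itself a tableau ideal in the original sense: in row $\gamma$, the $x_\gamma$- and $y_j$-exponents are now offset by one, so the induction hypothesis does not apply verbatim. Overcoming this will require either enlarging the induction to a broader class of monomial ideals closed under the single-variable colon operation (while still admitting a sensible notion of a minimal box), or finding a polarization-style substitution that recasts $I(Y):x_\gamma$ as a genuine tableau ideal on a modified Young diagram. Verifying that the minimality condition propagates cleanly through each colon and each subsequent sum operation is where I expect the bulk of the technical work to lie.
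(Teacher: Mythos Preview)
The gap in your plan is the upper bound $\depth S/I(Y)\le \depth S/(I(Y)+(x_\gamma))$. For a general monomial ideal and variable one only has $\depth S/(I,x)\ge \depth S/I-1$, and your six depth-lemma inequalities---even augmented by the dichotomy $D\in\{A_x,B_x\}$ from \cite{CHHKTT} and by inductive controls of the shape $A_x\ge B_y$, $A_y\ge B_x$ that your iterated colons might produce---do not rule out the configuration $D=A_x=A_y=B_x+1=B_y+1$: every one of the inequalities is satisfied, yet $D>\min(B_x,B_y)$. So the ``short case analysis'' you envision cannot close without an independent argument that adding $x_\gamma$ (or $y_\delta$) does not drop the depth. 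Nothing in the exact-sequence framework supplies this.

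That inequality is precisely Lemma~\ref{lem_depth_upperbound_add_variable} in the paper, and it is the technical core. The paper does not use iterated exact sequences or induction on total weight; it works throughout with Hochster's formula $\depth S/I=\min_f\depth S/\sqrt{I:f}$. One first finds (Lemma~\ref{depth_complete_bipartite}) an associated radical $J$ of $I(Y)+(x_\gamma)$ realizing its depth and corresponding to a complete bipartite graph, then runs a multi-step reduction---stripping away the $\b$-part and then the $\a$-part of the colon exponent, pushing $\gamma$ to the last row---to exhibit that same $J$ as an associated radical of $I(Y)$ itself; the explicit projective-dimension formula for Ferrers ideals (Proposition~\ref{CN_Ilambda}) is used along the way. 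Once $D\le\min(B_x,B_y)$ is in hand, the lower bound is short: a comparison of associated radicals (Lemma~\ref{lem_depth_reduction_w_1}) gives $\depth S/I(Y)\ge\depth S/I(Y')$ where $Y'$ has weight $1$ at $(\gamma,\delta)$, and for $\omega=1$ one checks directly that every associated radical of $I(Y):x_\gamma$ is already an associated radical of $I(Y)+(y_\delta)$, so $A_x\ge B_y$; combined with $D\in\{A_x,B_x\}$ this finishes. The obstacle you flag---that $I(Y):x_\gamma$ is not a tableau ideal---is thus secondary: the paper never needs an inductive hypothesis on that colon.
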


Consequently, we deduce that the depth of $I(Y)$ is the same as that of the corresponding Ferrers ideal when $Y$ is weakly increasing and classify all weights for which $I(Y)$ is Cohen-Macaulay.

\begin{cor}\label{cor_weakly_increasing_depth} Assume that $Y$ is a weakly increasing filling of a Young diagram $\lambda$. Then $$\depth S/I(Y) = \depth S/I_\lambda = m+n - \max \{ \lambda_i + i -1 \mid i = 1, \ldots, n\}.$$ 
 \end{cor}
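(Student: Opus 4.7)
The plan is to combine Theorem~\ref{thm_depth_tableau_0} with induction on the total size $|\lambda| = \lambda_1 + \cdots + \lambda_n$. In the base case $|\lambda|=1$, both $I(Y)$ and $I_\lambda$ are principal ideals in $S = \k[x_1,y_1]$ generated by a single nonzerodivisor, so both quotients have depth $1 = m+n-1$, matching the formula. For the inductive step, since $Y$ is weakly increasing, the minimum weight is attained only at the top-left box: any $(i,j)$ with $i\le 1$ and $j\le 1$ forces $(i,j)=(1,1)$. Thus $(1,1)$ is the unique minimal box of $Y$, and Theorem~\ref{thm_depth_tableau_0} gives
$$\depth S/I(Y) = \min\{\depth S/(I(Y)+(x_1)),\ \depth S/(I(Y)+(y_1))\}.$$

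Next I would identify the two quotients on the right as polynomial extensions of tableau-ideal quotients of smaller weakly increasing fillings. Setting $x_1 = 0$ kills every generator $(x_1y_j)^{w(1,j)}$ from row $1$, so $S/(I(Y)+(x_1))$ is isomorphic to $\k[x_2,\ldots,x_n,y_1,\ldots,y_m]/I(Y')$, where $Y'$ is the weakly increasing filling of $\lambda'=(\lambda_2,\ldots,\lambda_n)$ and the variables $y_{\lambda_2+1},\ldots,y_m$ appear freely. Similarly, $S/(I(Y)+(y_1))$ is the tableau-ideal quotient for the weakly increasing filling $\tilde Y$ of $\tilde\lambda = (\lambda_1-1,\ldots,\lambda_{n''}-1)$, with $n''=\#\{i : \lambda_i\ge 2\}$, polynomially extended by the free variables $x_{n''+1},\ldots,x_n$. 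Both $Y'$ and $\tilde Y$ have strictly fewer boxes, so the inductive hypothesis applies to each.

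Since depth is additive under polynomial extension, the same reduction applied to the Ferrers ideal $I_\lambda$ (the weakly increasing filling with all weights equal to $1$) yields identical values on the right-hand side of the recursion, so the induction delivers $\depth S/I(Y)=\depth S/I_\lambda$ in one stroke. The explicit formula then follows either from the Corso--Nagel minimal free resolution of $I_\lambda$ in \cite{CN} together with Auslander--Buchsbaum, or directly from the combinatorial identity
$$\max_{1\le i\le n}\{\lambda_i+i-1\} = \max\!\Big(\max_{2\le i\le n}\{\lambda_i+i-1\},\ \max_{1\le i\le n''}\{\lambda_i+i-1\}\Big),$$
which is exactly what is needed to match the two sides of the recursion. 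I expect the main obstacle to be careful bookkeeping of the extra free $x$- and $y$-variables introduced when $\lambda_2<\lambda_1$ or when $n''<n$, together with the degenerate boundary cases $n=1$ (row removal empties the diagram) and $n''=0$ (column removal erases every row); both should be absorbed cleanly into the base of the induction rather than complicating the main step.
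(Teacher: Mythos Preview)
Your approach is correct and essentially identical to the paper's: both observe that $(1,1)$ is a minimal box of a weakly increasing filling, apply Theorem~\ref{thm_depth_tableau_0} to obtain the same recursion for $I(Y)$ and for $I_\lambda$ (the constant filling), and conclude by induction together with the Corso--Nagel formula. One minor slip in your wording: the minimum weight need not be attained \emph{only} at $(1,1)$ (take a constant filling), but $(1,1)$ is nonetheless a minimal box---indeed the unique one, since any other $(\gamma,\delta)$ has $(1,1)$ weakly above-left with $w(1,1)=\omega$, violating the strict inequality in the definition---and that is all your argument actually uses.
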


 \begin{cor}
     Let $Y$ be an arbitrary filling of a Young diagram $\lambda$. Then $I(Y)$ is Cohen-Macaulay if and only if $\lambda = (n,n-1,\ldots,2,1)$ and $Y$ is weakly increasing.
 \end{cor}

We also have a similar formula for the regularity of $I(Y)$.

\begin{thm}\label{thm_regularity_tableau_0} Let $Y$ be an arbitrary filling of a Young diagram $\lambda$ with at least two boxes. Let $(\gamma,\delta)$ be a minimal box of $Y$. Then, 
$$\reg S/I(Y) = \omega-1 + \max \left \{ \reg S/(I(Y) + (x_{\gamma})), \reg S/(I(Y) + (y_{\delta})) \right \}.$$   
\end{thm}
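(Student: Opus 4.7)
The plan is to derive the formula via a short exact sequence involving $x_\gamma^\omega$, after first reducing the relevant auxiliary ideals through tensor-product decompositions.

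Since $(\gamma,\delta)$ is a minimal box, every row-$\gamma$ generator $(x_\gamma y_j)^{w(\gamma,j)}$ of $I(Y)$ has $x_\gamma$-exponent $w(\gamma,j)\ge\omega$ and hence lies in $(x_\gamma^\omega)$. Thus $I(Y)+(x_\gamma^\omega) = (x_\gamma^\omega) + J_\gamma$ with $J_\gamma$ free of $x_\gamma$, giving the tensor decomposition
\[ S/(I(Y)+(x_\gamma^\omega)) \cong S/(I(Y)+(x_\gamma))\otimes_{\k} \k[x_\gamma]/(x_\gamma^\omega), \]
and hence $\reg S/(I(Y)+(x_\gamma^\omega)) = (\omega-1)+\reg S/(I(Y)+(x_\gamma))$, with the symmetric identity for $y_\delta^\omega$. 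The same reasoning applies to the colon: the fact that $(x_\gamma y_\delta)^\omega \in I(Y)$ forces $y_\delta^\omega \in I(Y):x_\gamma^\omega$, while every column-$\delta$ generator outside row $\gamma$ has $y_\delta$-exponent $\ge\omega$ and is absorbed into $(y_\delta^\omega)$. This yields
\[ S/(I(Y):x_\gamma^\omega) \cong \bar S/(\tilde I : x_\gamma^\omega) \otimes_{\k} \k[y_\delta]/(y_\delta^\omega), \]
where $\bar S = \k[x_1,\ldots,x_n, y_j : j\ne\delta]$ and $\tilde I$ is the tableau ideal of the filling $\tilde Y$ obtained from $Y$ by deleting column $\delta$ (still a Young diagram since $\lambda$ has weakly decreasing rows). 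Hence $\reg S/(I(Y):x_\gamma^\omega) = (\omega-1)+\reg \bar S/(\tilde I:x_\gamma^\omega)$.

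The upper bound comes from the short exact sequence
\[ 0 \to S/(I(Y):x_\gamma^\omega)(-\omega) \to S/I(Y) \to S/(I(Y)+(x_\gamma^\omega)) \to 0, \]
which gives $\reg S/I(Y)\le\max\{\reg S/(I(Y):x_\gamma^\omega)+\omega,\; \reg S/(I(Y)+(x_\gamma^\omega))\}$. Combining with the three tensor identities above, this matches the desired quantity $\omega-1+\max\{\reg S/(I(Y)+(x_\gamma)),\reg S/(I(Y)+(y_\delta))\}$ provided one establishes the key inequality
\[ \reg\bar S/(\tilde I:x_\gamma^\omega)\le \reg\bar S/\tilde I - \omega, \qquad (\star) \]
since $\reg\bar S/\tilde I = \reg S/(I(Y)+(y_\delta))$. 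The plan is to prove $(\star)$ by strong induction on the number of boxes of $\lambda$: since $\tilde Y$ has strictly fewer boxes than $Y$, the inductive hypothesis (together with Theorem~\ref{thm_depth_tableau_0}) decomposes $\reg\bar S/\tilde I$ through the minimal box of $\tilde Y$, and then a further short exact sequence in $\bar S$---again analyzed via the tensor decompositions of the first paragraph---pins down $\reg\bar S/(\tilde I:x_\gamma^\omega)$.

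For the matching lower bound, the short exact sequence with $y_\delta^\omega$ in place of $x_\gamma^\omega$, combined with the symmetric counterpart of $(\star)$, forces $\reg S/I(Y)$ to attain whichever of $\reg S/(I(Y)+(x_\gamma^\omega))$ and $\reg S/(I(Y)+(y_\delta^\omega))$ is larger, finishing the proof. The principal obstacle is the inductive proof of $(\star)$: the modified colon $\tilde I:x_\gamma^\omega$ is not itself a tableau ideal, and the minimum weight of $\tilde Y$ may strictly exceed $\omega$, so establishing the exact $\omega$-gap demands a careful comparison involving the minimal box of $\tilde Y$, which need not lie in row $\gamma$.
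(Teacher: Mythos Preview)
Your approach via the short exact sequence for $x_\gamma^\omega$ and tensor decompositions is genuinely different from the paper's method, which proceeds through degree complexes and associated radicals (Lemmas \ref{lem_reg_reduction_to_weight_1} and \ref{lem_reg_lower_bound}). The tensor identities in your first paragraph are correct and elegant. However, the proposal has a fatal gap: the inequality $(\star)$ is \emph{false} as stated.

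Take $\lambda=(2,1)$ with weights $w(1,1)=3$, $w(1,2)=5$, $w(2,1)=2$. Then $\omega=2$ and $(\gamma,\delta)=(2,1)$ is a minimal box. Deleting column $1$ leaves $\tilde Y$ with the single box $(1,2)$ of weight $5$, so $\tilde I=((x_1y_2)^5)$ in $\bar S=\k[x_1,x_2,y_2]$. Since $x_2$ does not appear in $\tilde I$, we have $\tilde I:x_2^2=\tilde I$, and $\reg\bar S/\tilde I=9$. Thus $(\star)$ reads $9\le 9-2$, which fails. The underlying issue is that when row $\gamma$ disappears from $\tilde Y$ (which happens whenever $\lambda_\gamma=1$ and $\delta=1$), the colon by $x_\gamma^\omega$ does nothing, and there is no reason for the $\omega$-gap. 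What your short exact sequence actually requires is the weaker bound $\omega+\reg\bar S/(\tilde I:x_\gamma^\omega)\le\max\{\reg S/(I(Y)+(x_\gamma)),\reg\bar S/\tilde I\}$, and in the example above this does hold (both sides equal $11$); but you neither state this correctly nor prove it, and you yourself flag the inductive step as ``the principal obstacle'' without supplying the argument.

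The lower bound paragraph is also not a proof. A short exact sequence gives only $\reg B\le\max\{\reg A,\reg C\}$; it does not ``force'' $\reg B$ to attain the larger of two quotient regularities without additional information ruling out cancellation. You would need a separate argument (as the paper provides in Lemma \ref{lem_reg_lower_bound} via an explicit critical pair) to obtain $\reg S/I(Y)\ge(\omega-1)+\reg S/(I(Y)+(x_\gamma))$. As written, the proposal is a sketch that correctly sets up the framework but leaves both the upper and lower bounds unproved, with the stated key inequality actually incorrect.
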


Note that $I(Y) + (x_{\gamma})$ and $I(Y) + (y_\delta)$ correspond to the tableau ideals obtained by deleting the $\gamma$th row and $\delta$th column of $Y$ respectively. Thus, Theorem \ref{thm_depth_tableau_0} and Theorem \ref{thm_regularity_tableau_0} give us simple recursive formulae for computing the depth and regularity of $I(Y)$. Here is an example to illustrate our result.

\begin{exam}
    Consider the following filling of a Young diagram 
    \begin{center}
\begin{tabular}{ccc}
$\begin{ytableau}
*(white) 3 & *(red) 1 & *(white) 5 & *(blue) 6\\
*(yellow) 2 & *(white) 3 & *(green) 4 & *(orange) 6   \\
*(white) 2 & *(white) 3 & *(white) 5    \\
*(white) 2 & *(white) 4 \\
*(white) 3
\end{ytableau}$\\[5pt]
\end{tabular}
 \end{center}
The tableau ideal associated with $Y$ is 
\begin{align*}
    I(Y) = ( &(x_1y_1)^3,x_1y_2,(x_1y_3)^5,(x_1y_4)^6,(x_2y_1)^2,(x_2y_2)^3,(x_2y_3)^4,(x_2y_4)^6 \\
    & (x_3y_1)^2,(x_3y_2)^3,(x_3y_3)^5,(x_4y_1)^2,(x_4y_2)^4,(x_5y_1)^3 ).
\end{align*}
First, the red box is the one with the smallest weight. Hence, we have 
$$\depth S/I(Y) = \min \{ \depth S/(I(Y) + (x_1)), \depth S/(I(Y) + (y_2)) \}.$$
After deleting either the first row or the second column, the yellow box will become the minimal box. Repeating the process, we get $\depth S/I(Y) = 2$. 

For the regularity, we see that deleting the first row will give us higher regularity. Repeating the process, we choose the yellow, green, blue, and orange boxes and get
$$\reg S/I(Y) = 1 + 3 + 5 + 5 + 6 = 20.$$
\end{exam}

We now describe the idea of proofs of the main results. For a monomial ideal $I$, Hochster \cite{Hoc} introduced the set of {\it associated radicals} of $I$, namely $\sqrt{I:u}$ for monomials $u\notin I$ and proved that 
$$\depth S/I = \min \{\depth S/J \mid J \text{ is an associated radical of } I \}.$$
In \cite{MNPTV}, Minh, Nam, Phong, Thuy, and Vu proved that the regularity of $I$ can also be computed via the associated radicals of $I$. Thus, a key step in proving our main results is to understand the associated radicals of tableau ideals. More precisely, let $Y'$ be the filling of $\lambda$ obtained by replacing the weight $\omega$ on the $(\gamma,\delta)$ box of $Y$ by $1$. The steps to prove Theorem \ref{thm_depth_tableau_0} are 
\begin{enumerate}
    \item By comparing the associated radicals of $I(Y)$ and $I(Y')$, we prove in Lemma \ref{lem_depth_reduction_w_1} that $\depth S/I(Y) \ge \depth S/I(Y')$. 
    \item In Lemma \ref{lem_depth_upperbound_add_variable}, we prove that $\depth S/I(Y) \le \depth S/(I(Y) + (x_\gamma))$ by using the structure of Ferrers ideals to reduce to the case where $I(Y) + (x_\gamma)$ has an associated radical corresponding to a complete bipartite graph (i.e., a Ferrers ideal of a rectangular partition). 
    \item By using the result of Caviglia, Ha, Herzog, Kummini, Terai, and Trung \cite{CHHKTT} stating that $\depth S/I \in \{\depth S/(I,x), \depth (S/I:x)\}$, we deduce the conclusion.
\end{enumerate}
Similarly, the steps to prove Theorem \ref{thm_regularity_tableau_0} are
\begin{enumerate}
    \item By analyzing the structure of the degree complexes of $I(Y)$, in Lemma \ref{lem_reg_reduction_to_weight_1} we prove that $\reg S/I(Y) \le \reg S/I(Y') + (\omega - 1)$. 
    \item Similarly, by analyzing the degree complexes of $I(Y)$ and $I(Y) + (x_\gamma)$, in Lemma \ref{lem_reg_lower_bound}, we prove that $\reg S/I(Y) \ge (\omega - 1) + \reg S/(I(Y) + (x_\gamma))$. 
    \item By using the result of Dao, Huneke, and Schweig \cite{DHS} stating that $\reg S/I  \in \{ \reg S/(I,x), \reg S/I:x + 1\}$, we deduce the conclusion.
\end{enumerate}
While the steps to establish the formulae for the depth and regularity of tableau ideals are similar, the proofs are on different extremes. The formulae in Theorem \ref{thm_depth_tableau_0} and Theorem \ref{thm_regularity_tableau_0} can be restated in terms of combinatorics of the tableau $Y$ as follows. Each box $(i,j)$ of $Y$ is marked with either $r$ or $c$, corresponding to the $i$th row or the $j$th column of $Y$. A marked $(i,j)$ box is denoted by $(i,j,m)$ with $m \in \{r,c\}$. For a marked box $(i,j,m)$, we let
$$L(i,j,m) = \begin{cases}
    \text{the } i \text{th row of } Y & \text{ if } m = r,\\
    \text{the } j \text{th column of } Y & \text{ if } m = c.
\end{cases}$$
A collection of marked boxes $M = \{ (i_1,j_1,m_1), \ldots, (i_s,j_s,m_s)\}$ is called an {\it admissible collection} if $Y = L(i_1,j_1,m_1) \cup \cdots \cup L(i_s,j_s,m_s)$ and $(i_t,j_t)$ is a minimal box of the tableau $Y_t$ obtained by deleting $L(i_1,j_1,m_1), \ldots, L(i_{t-1},j_{t-1},m_{t-1})$. The shape of $Y_t$ is denoted by $\lambda^t$. Its conjugate partition is denoted by $\mu^t$. For each $t = 1, \ldots, s$, we set 
$$d_t = \begin{cases} (\lambda^t)_1 - (\lambda^t)_2 & \text{ if } m_t = c \text{ and } L(i_t,j_t,m_t) \text{ is the first column of } Y_t,\\
(\mu^t)_1 - (\mu^t)_2 & \text{ if } m_t = r \text{ and } L(i_t,j_t,m_t) \text{ is the first row of } Y_t, \\
0 & \text{ otherwise}.
\end{cases}$$
The depth and regularity of $M$ with respect to $Y$ are defined by \begin{align*}
    d(M,Y) & = d_1 + \cdots + d_s,\\
    r(M,Y) &= \sum_{i=1}^s(w(i_1,j_1)-1) + w(i_s,j_s).
\end{align*}
Then we have 
\begin{thm}\label{thm_main}
    Let $Y$ be an arbitrary filling of a Young diagram $\lambda$. Denote by $\M$ the set of all admissible collections of marked boxes of $Y$. Then 
    \begin{align*}
        \depth S/I(Y) &= \min\{ d(M,Y) \mid M \in \M\}, \\
        \reg S/I(Y) &= \max \{ r(M,Y) \mid M \in \M\}.
    \end{align*}
\end{thm}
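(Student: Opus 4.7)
The strategy is to prove both statements simultaneously by induction on $|Y|$, the number of boxes, using Theorem \ref{thm_depth_tableau_0} and Theorem \ref{thm_regularity_tableau_0} as the driving recursions.

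The base case is $|Y|=1$: a single box $(a,b)$ with weight $w$, so $I(Y)=((x_ay_b)^w)$ is a principal ideal of degree $2w$ in $S=\k[x_a,y_b]$. Hence $\depth S/I(Y)=1$ and $\reg S/I(Y)=2w-1$. The only admissible collections are the singletons $\{(a,b,r)\}$ and $\{(a,b,c)\}$, and one checks directly that each satisfies $d(M,Y)=1$ and $r(M,Y)=(w-1)+w=2w-1$, matching the formulas.

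For the inductive step, fix a minimal box $(\gamma,\delta)$ of $Y$ and let $Y_r$ (resp.\ $Y_c$) be the filling obtained by deleting row $\gamma$ (resp.\ column $\delta$); after the natural re-indexing both are again fillings of Young diagrams and have strictly fewer boxes. The core computation is to relate $\depth S/(I(Y)+(x_\gamma))$ and $\reg S/(I(Y)+(x_\gamma))$ to the intrinsic invariants of $Y_r$ computed in the polynomial ring $S_{Y_r}$ that uses only the variables actually appearing in $I(Y_r)$. Killing $x_\gamma$ in $S$ passes to a polynomial ring in $n+m-1$ variables; the $y$-variables that no longer appear in $I(Y_r)$ (these number $\lambda_1-\lambda_2$ when $\gamma$ is the topmost row of $Y$ and $0$ otherwise) contribute $+1$ to the depth each while leaving the regularity unchanged. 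This delivers
\[
\depth S/(I(Y)+(x_\gamma)) = e_r + \depth S_{Y_r}/I(Y_r), \qquad \reg S/(I(Y)+(x_\gamma)) = \reg S_{Y_r}/I(Y_r),
\]
with the free-variable count $e_r$ equal to the step contribution $d_1$ assigned to $(\gamma,\delta,r)$. A parallel identity holds for column deletion, with the freed $x$-variables playing the symmetric role.

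Combining these identities with the two theorems and the inductive hypothesis, I would show that admissible collections of $Y$ whose first marked box is $(\gamma,\delta,m_1)$ correspond bijectively to admissible collections of $Y_{m_1}$, and that under this correspondence $d(M,Y)=d_1+d(M^\ast,Y_{m_1})$ while $r(M,Y)=(w(\gamma,\delta)-1)+r(M^\ast,Y_{m_1})$. Taking the minimum (resp.\ maximum) over $m_1\in\{r,c\}$ and over admissible collections of the smaller tableaux then realizes the depth (resp.\ regularity) as prescribed. The fact that Theorem \ref{thm_depth_tableau_0} and Theorem \ref{thm_regularity_tableau_0} are valid at \emph{any} minimal box of $Y$ ensures that restricting $M$ to begin at the particular $(\gamma,\delta)$ does not lose the optimum, since an arbitrary admissible $M\in\M$ can be analyzed by applying the recursion at its own first minimal box and invoking induction on the resulting sub-tableau. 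The main obstacle is the careful bookkeeping of the ambient polynomial ring across the recursion: one must verify in all three geometric cases (topmost row deletion, leftmost column deletion, and interior row/column deletion) that the freed variables produce exactly the combinatorial step contribution $d_t$, and separately track the $w(\gamma,\delta)-1$ that Theorem \ref{thm_regularity_tableau_0} adds to the regularity at each stage.
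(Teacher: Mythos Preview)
Your proposal is correct and follows essentially the same approach as the paper's own proof: induction on the number of boxes, reduction to the smaller tableaux $Y_r$ and $Y_c$ via Theorems~\ref{thm_depth_tableau_0} and~\ref{thm_regularity_tableau_0}, the free-variable bookkeeping (Lemma~\ref{lem_depth_variables_clearing}) to identify the step contribution $d_1$, and the observation that every admissible collection begins at some minimal box so that the recursion at any chosen minimal box captures the global optimum. Your treatment of the last point---arguing that applying the recursion at the first box of an arbitrary $M$ and invoking induction shows $d(M,Y)\ge\depth$ and $r(M,Y)\le\reg$---is in fact a bit more explicit than the paper's one-line remark.
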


We now describe the organization of the paper. In Section \ref{sec_pre}, we first state definitions and standard results about the depth and regularity of monomial ideals. Next, we recall the notion of edge ideals of edge-weighted graphs. Finally, we consider Ferrers ideals and tableau ideals. In Section \ref{sec_depth_tableaux_ideal}, we compute the depth of tableau ideals. In Section \ref{sec_regularity_tableaux_ideal}, we compute the regularity of tableau ideals and prove Theorem \ref{thm_main}.

\section{Preliminaries}\label{sec_pre}
In this section, we first state definitions and some properties concerning the depth and regularity of monomial ideals. Next, we recall edge ideals of edge-weighted graphs and prove some results about their associated radicals. We then recall the results of Corso and Nagel \cite{CN} about Ferrers ideals. Finally, we introduce tableau ideals and establish some properties of their associated radicals.

In the first three subsections, we let $S = \k[x_1,\ldots, x_n]$ be a standard graded polynomial ring over a field $\k$ with the maximal homogeneous ideal $\m = (x_1,\ldots, x_n)$. We first recall the notion of depth and regularity. For more information, we refer to \cite{BH}.

\subsection{Depth and regularity} For a finitely generated graded $S$-module $L$, the depth of $L$ is defined to be
$$\depth(L) = \min\{i \mid H_{\m}^i(L) \ne 0\},$$
where $H^{i}_{\m}(L)$ denotes the $i$-th local cohomology module of $L$ with respect to $\m$. 

The regularity of $L$ is defined to be 
$$\reg (L) = \max \{i + j \mid H_{\m}^i(L)_j \neq 0\}.$$

\begin{defn} A finitely generated graded $S$-module $L$ is called Cohen-Macaulay if $\depth L = \dim L$. A homogeneous ideal $I \subseteq S$ is said to be Cohen-Macaulay if $S/I$ is a Cohen-Macaulay $S$-module.
\end{defn}

We have the following result of Hochster \cite{Hoc} on the depth of monomial ideals. 
\begin{thm}\label{thm_Hochster_depth} Let $I$ be a monomial ideal. Then
    $$\depth S/I = \min \{ \depth S/\sqrt{I:f} \mid f \text{ is a monomial such that } f \notin I\}.$$
\end{thm}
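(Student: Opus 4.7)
The plan is to exploit the $\ZZ^n$-grading on local cohomology together with Takayama's combinatorial formula, translating both sides of the equality into reduced simplicial homology of degree complexes and matching these with Stanley-Reisner data of the associated radicals. Since $I$ is a monomial ideal, $H^i_\m(S/I)$ inherits a $\ZZ^n$-grading, so
$$\depth S/I = \min\{i : H^i_\m(S/I)_\a \neq 0 \text{ for some } \a \in \ZZ^n\}.$$
Takayama's formula expresses each multigraded piece as
$$\dim_\k H^i_\m(S/I)_\a = \dim_\k \h_{i-|G(\a)|-1}(\Delta_\a(I); \k),$$
where $G(\a) = \{j : a_j < 0\}$ and $\Delta_\a(I)$ is the degree complex on $[n]\setminus G(\a)$ characterized by: $F \in \Delta_\a(I)$ iff $\mathbf{x}^{\a^+}$ remains outside $I$ after inverting all $x_i$ with $i \in F \cup G(\a)$. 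When $I = I_\Delta$ is squarefree, this reduces to the classical Hochster formula: $\Delta_\a(I_\Delta) = \lk_\Delta G(\a)$, and the piece vanishes unless $\a \leq 0$.

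The key observation is that $\Delta_\a(I) = \lk_{\Delta(J_\a)}G(\a)$, where $J_\a = \sqrt{I:\mathbf{x}^{\a^+}}$ and $\Delta(J_\a)$ denotes the Stanley-Reisner complex of the squarefree monomial ideal $J_\a$. This is a direct unwinding of the definitions: $F \in \Delta_\a(I)$ says that $\mathbf{x}^{\a^+}$ cannot be multiplied by monomials supported on $F \cup G(\a)$ to land in $I$, which is equivalent to $\prod_{i \in F \cup G(\a)} x_i \notin J_\a$, i.e., $F \cup G(\a)$ is a face of $\Delta(J_\a)$. Consequently, Takayama's formula for $H^i_\m(S/I)_\a$ and the classical squarefree Hochster formula for $H^i_\m(S/J_\a)_{-\chi_{G(\a)}}$ compute the same $\k$-dimension.

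This matching gives both inequalities. For one direction, given any $f \notin I$ with $d = \depth S/\sqrt{I:f}$, pick $\sigma$ with $H^d_\m(S/\sqrt{I:f})_{-\chi_\sigma} \neq 0$ and construct a positive exponent vector $\a^+$, supported outside $\sigma$, with $\sqrt{I:\mathbf{x}^{\a^+}} = \sqrt{I:f}$; then $\a = \a^+ - \chi_\sigma$ satisfies $H^d_\m(S/I)_\a \neq 0$ by the identification, hence $\depth S/I \leq d$. For the reverse direction, take a multidegree $\a$ realizing $\depth S/I$; then $f = \mathbf{x}^{\a^+}$ lies outside $I$ (else the corresponding piece would vanish), and the same identification yields $\depth S/\sqrt{I:f} \leq \depth S/I$. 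The main technical obstacle is the lifting step in the first direction — producing a positive exponent vector $\a^+$ supported outside $\sigma$ with the prescribed radical colon — which is handled by noting that $\sqrt{I:g}$ depends only on which associated primes of $I$ fail to contain $g$, so one can freely increase the exponents on variables outside $\sigma$ to realize the desired associated radical without leaving $I$.
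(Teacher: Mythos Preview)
The paper does not prove this statement; it is quoted as a known result attributed to Hochster \cite{Hoc} and used as a black box throughout. So there is no in-paper argument to compare against, and your Takayama-based approach is indeed the standard modern route to this formula.

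That said, your lifting step in the inequality $\depth S/I \le \depth S/\sqrt{I:f}$ has a genuine gap. You assert that one can always find $\a^+$ supported outside $\sigma$ with $\sqrt{I:x^{\a^+}} = \sqrt{I:f}$, and justify this by saying that $\sqrt{I:g}$ depends only on which associated primes of $I$ fail to contain $g$. Both claims are false. First, $\sqrt{I:g} = \bigcap_{g \notin Q_i} P_i$ depends on which \emph{primary components} $Q_i$ avoid $g$, not which associated primes do. Second, even with that correction, the exact equality need not be attainable: for $I = (x_1x_2^2, x_1^2x_2)$, $f = x_1$, one has $J = \sqrt{I:f} = (x_2)$ and $\sigma = \{1\} \in \Delta(J)$, yet every monomial $g$ supported on $\{2\}$ gives $\sqrt{I:g} \in \{(x_1x_2),(x_1)\}$, never $(x_2)$.

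The fix is that you do not need the radicals to coincide, only the links at $\sigma$. Write $f = x^\c$ and set $\a^+$ to be the restriction of $\c$ to $[n]\setminus\sigma$. Let $K = \sqrt{I:x^{\a^+}}$. Then $J = \sqrt{I:f} = K : x_{\sigma'}$ with $\sigma' = \sigma \cap \supp f \subseteq \sigma$, and a direct check shows $\lk_{\Delta(K)}\sigma = \lk_{\Delta(J)}\sigma$. Since $x^{\a^+}$ divides $f$, it lies outside $I$, so with $\a = \a^+ - \chi_\sigma$ you get $\Delta_\a(I) = \lk_{\Delta(K)}\sigma = \lk_{\Delta(J)}\sigma$, and Takayama gives $H^d_\m(S/I)_\a \neq 0$ as desired. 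The rest of your outline is correct.
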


We also have the following results on the depth and regularity of monomial ideals modulo a variable.

\begin{lem}\label{lem_depth_reg_colon} Let $I$ be a monomial ideal and $x$ a variable. Then 
\begin{align*}
    \depth S/I & \in \{\depth (S/I:x), \depth (S/(I,x))\} \\
    \reg S/I & \in \{ \reg (S/I:x) + 1, \reg (S/(I,x)) \}.
\end{align*}  
\end{lem}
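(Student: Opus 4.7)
My plan is to analyze the long exact sequence in local cohomology induced by the short exact sequence
\[
0 \longrightarrow \bigl(S/(I:x)\bigr)(-1) \xrightarrow{\,\cdot x\,} S/I \longrightarrow S/(I,x) \longrightarrow 0.
\]
The key structural feature beyond exactness is that $x\cdot S/(I,x)=0$, and hence $x$ annihilates each local cohomology module $H^i_{\mathfrak{m}}(S/(I,x))$. By $S$-linearity of the connecting homomorphism, the image of $\delta\colon H^i_{\mathfrak{m}}(S/(I,x)) \to H^{i+1}_{\mathfrak{m}}(S/(I:x))(-1)$ lies in the $x$-torsion of the target.

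For the regularity statement I would pick $(i,j)$ with $i+j=\reg S/I$ and $H^i_{\mathfrak{m}}(S/I)_j\neq 0$. Exactness of the three-term segment
\[
H^i_{\mathfrak{m}}(S/(I:x))_{j-1}\to H^i_{\mathfrak{m}}(S/I)_j\to H^i_{\mathfrak{m}}(S/(I,x))_j
\]
forces at least one outer term to be nonzero, yielding $\reg S/I\le\max\{\reg S/(I:x)+1,\reg S/(I,x)\}$. Combining this with the two companion inequalities
\[
\reg S/(I:x)+1\le\max\{\reg S/I,\reg S/(I,x)+1\},\quad \reg S/(I,x)\le\max\{\reg S/I,\reg S/(I:x)\}
\]
from the regularity lemma applied to the cyclic rotations of the SES, a case split on the ordering of $\reg S/(I:x)+1$ and $\reg S/(I,x)$ forces $\reg S/I$ to attain one of the two bounds. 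In the corner case where the ordinary regularity lemma leaves slack, the $x$-torsion property of $\delta$ supplies the missing constraint, as worked out in \cite{DHS}.

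For the depth statement, the depth lemma applied to the SES gives $\depth S/I\ge\min\{\depth S/(I:x),\depth S/(I,x)\}$. To rule out $\depth S/I$ strictly exceeding both candidates, I would argue by contradiction: at the index $i=\min\{\depth S/(I:x),\depth S/(I,x)\}$, the LES combined with the $x$-torsion property of $\delta$ would force inconsistent vanishing and nonvanishing conclusions for the bottom local cohomology of $S/(I:x)$ or $S/(I,x)$, as worked out in \cite{CHHKTT}. The main obstacle in both parts is the same: upgrading the inequalities from the standard depth and regularity lemmas to the sharper membership $\in\{\cdot,\cdot\}$, which ultimately rests on the observation that $x$ annihilates $S/(I,x)$ and hence constrains the connecting map in the LES to land in $x$-torsion.
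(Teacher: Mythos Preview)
The paper's own proof is a one-line citation: ``This is \cite[Corollary 3.3]{CHHKTT}.'' Your sketch is compatible with this and in fact fills in the argument behind that citation---the short exact sequence $0\to (S/(I:x))(-1)\to S/I\to S/(I,x)\to 0$, the induced long exact sequence in local cohomology, and the constraint on the connecting map coming from $x\cdot S/(I,x)=0$---while ultimately deferring the delicate case analysis to the same sources \cite{DHS} and \cite{CHHKTT} that the paper invokes.
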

\begin{proof}
    This is \cite[Corollary 3.3]{CHHKTT}.
\end{proof}

\begin{lem}\label{lem_depth_reg_adding_variable} Let $I$ be a monomial ideal and $x$ a variable. Then 
\begin{enumerate}
    \item $\depth S/(I,x) \ge \depth S/I - 1,$
    \item $\reg S/I \ge \reg S/(I,x)$.
\end{enumerate}
\end{lem}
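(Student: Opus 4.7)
The plan is to apply the short exact sequence
$$0 \to (S/(I:x))(-1) \xrightarrow{\cdot x} S/I \to S/(I,x) \to 0$$
to extract both inequalities, and then to close the remaining gap via Lemma \ref{lem_depth_reg_colon}.

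For part (1), the depth lemma (equivalently, the long exact sequence in local cohomology) applied to this short exact sequence yields
$$\depth S/(I,x) \ge \min\bigl\{\depth S/(I:x) - 1,\ \depth S/I\bigr\}.$$
Lemma \ref{lem_depth_reg_colon} tells us that $\depth S/I$ coincides with one of the two numbers $\depth S/(I:x)$ or $\depth S/(I,x)$. In the case $\depth S/I = \depth S/(I,x)$, the desired inequality is immediate. In the case $\depth S/I = \depth S/(I:x)$, the right-hand side of the minimum is equal to $\depth S/I - 1$, which is exactly the bound we need.

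For part (2), the regularity counterpart of the depth lemma applied to the same short exact sequence, keeping track of the shift $(-1)$ on the leftmost term, gives
$$\reg S/(I,x) \le \max\bigl\{\reg S/(I:x),\ \reg S/I\bigr\}.$$
Invoking Lemma \ref{lem_depth_reg_colon} once more, $\reg S/I$ equals either $\reg S/(I,x)$, in which case there is nothing to prove, or $\reg S/(I:x) + 1$, in which case $\reg S/(I:x) = \reg S/I - 1 < \reg S/I$ and the maximum on the right is $\reg S/I$.

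Because Lemma \ref{lem_depth_reg_colon} is already in hand and the depth and regularity estimates for a short exact sequence are entirely standard, no step presents a serious obstacle. The only technical point that requires care is correctly accounting for the degree shift $(-1)$ on the first term of the sequence, which is what converts the general short-exact-sequence inequalities into the sharp bounds used above.
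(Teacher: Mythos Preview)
Your argument is correct. Both parts follow cleanly from the short exact sequence together with the already-established Lemma~\ref{lem_depth_reg_colon}, and your bookkeeping of the shift $(-1)$ is accurate.

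The paper, however, takes a different route. For part~(1) it passes to the restriction $J$ of $I$ to the polynomial subring $T$ in the remaining variables, invokes \cite[Lemma~4.4]{HHZ} to obtain $\pd(T/J)\le \pd(S/I)$, and then applies the Auslander--Buchsbaum formula. For part~(2) it simply cites \cite[Corollary~4.8]{CHHKTT}. Your approach has the advantage of being self-contained within the paper: it uses only the short exact sequence and Lemma~\ref{lem_depth_reg_colon} (already quoted from \cite{CHHKTT}), treats depth and regularity uniformly, and avoids the additional external reference to \cite{HHZ}. The paper's approach, on the other hand, is slightly more robust for part~(1) in that the projective-dimension comparison from \cite{HHZ} does not rely on the dichotomy of Lemma~\ref{lem_depth_reg_colon}; but in the present setting this makes no practical difference.
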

\begin{proof} For the first statement, let $J$ be the restriction of $I$ to the polynomial subring $T$ on the variable $\{x_1,\ldots,x_n\} \setminus \{x\}$. By \cite[Lemma 4.4]{HHZ}, $\pd (T/J) \le \pd (S/I)$. The conclusion follows from the Auslander-Buchsbaum formula. 

    The second statement follows from \cite[Corollary 4.8]{CHHKTT}.
\end{proof}
In the ideals of the form $I+(x)$ and $\sqrt{I:f}$, some variables will be part of the minimal generators, and some will not appear in any of the minimal generators. A variable that does not divide any minimal generators of a monomial ideal $J$ will be called a free variable of $J$. We have 

 \begin{lem}\label{lem_depth_variables_clearing} Assume that $I = J + (x_a, \ldots, x_b)$ and $x_{b+1}, \ldots, x_n$ are free variables of $J$ where $J$ is a monomial ideal in $R = \k[x_1,\ldots,x_{a-1}]$. Then
 \begin{align*}
  \depth S/I & = \depth R/J + (n-b), \\
  \reg S/I & = \reg R/J.   
 \end{align*}
 \end{lem}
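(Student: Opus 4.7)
The plan is to identify $S/I$ with a polynomial extension of $R/J$ and then invoke the standard behavior of depth and regularity under polynomial extensions. Since $J$ is a monomial ideal in $R = \k[x_1,\ldots,x_{a-1}]$ and the variables $x_{b+1},\ldots,x_n$ do not divide any minimal generator of $I$, killing $x_a,\ldots,x_b$ in $S$ and then quotienting by $J$ gives a graded $\k$-algebra isomorphism
$$S/I \;=\; S/(J+(x_a,\ldots,x_b)) \;\cong\; (R/J)[x_{b+1},\ldots,x_n].$$
Denote the right-hand side by $T$ and let $\m_T$ be its graded maximal ideal.

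Because the variables $x_a,\ldots,x_b$ annihilate $S/I$, the \v{C}ech complex on $x_1,\ldots,x_n$ that computes $H^i_{\m}(S/I)$ degenerates to the \v{C}ech complex on $x_1,\ldots,x_{a-1},x_{b+1},\ldots,x_n$ that computes $H^i_{\m_T}(T)$. Consequently $\depth S/I = \depth T$ and $\reg S/I = \reg T$, so it suffices to compute these invariants on $T$.

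For the depth, I would observe that $x_{b+1},\ldots,x_n$ form a regular sequence on $T$ with quotient $R/J$, yielding $\depth T = \depth R/J + (n-b)$. For the regularity, if $F_\bullet$ is the minimal graded free resolution of $R/J$ over $R$, then $F_\bullet \otimes_R R[x_{b+1},\ldots,x_n]$ is the minimal graded free resolution of $T$ over $R[x_{b+1},\ldots,x_n]$, with the same graded Betti numbers in each homological degree, and hence $\reg T = \reg R/J$. The only subtlety is to justify that depth and regularity taken with respect to $\m$ agree with those taken with respect to $\m_T$, and this is exactly what the \v{C}ech complex degeneration above provides; everything else reduces to well-known facts about polynomial extensions.
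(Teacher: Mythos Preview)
Your proof is correct. The key identification $S/I \cong (R/J)[x_{b+1},\ldots,x_n]$ is valid, the \v{C}ech complex degeneration (localizing at any $x_i$ with $a\le i\le b$ kills the module) legitimately transfers the local cohomology from $\m$ to the maximal ideal of $A=\k[x_1,\ldots,x_{a-1},x_{b+1},\ldots,x_n]$, and the regular-sequence and flat-base-change arguments finish the job. One notational wrinkle: when you write $\m_T$ you really mean the graded maximal ideal of the polynomial ring $A$ acting on $T$, not the maximal ideal of the (non-regular) ring $T$ itself; since $\m_A\cdot T$ equals the latter, this makes no difference for local cohomology, but it is worth saying which polynomial ring you are computing $\reg T$ over.

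The paper argues differently. It first strips off the free variables $x_{b+1},\ldots,x_n$ by observing that they contribute nothing to a minimal free resolution, reducing to $T'=\k[x_1,\ldots,x_b]$, and then views $I\subset T'$ as the \emph{mixed sum} of $J\subset R$ and the linear ideal $(x_a,\ldots,x_b)\subset\k[x_a,\ldots,x_b]$, invoking \cite[Theorem~1.1]{NV} together with Auslander--Buchsbaum. So the paper kills the free variables first and then applies an external structural result about sums of ideals in disjoint variables, whereas you kill the linear variables first and then handle the polynomial extension by hand. Your route is more self-contained and avoids the citation to \cite{NV}; the paper's route is shorter once that reference is granted and fits its general pattern of reducing everything to known formulas.
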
 
\begin{proof}
    Since free variables do not contribute to the minimal free resolution of $I$, we have $\pd(S/I) = \pd (T/I)$ and $\reg S/I = \reg T/I$ where $T = \k[x_1,\ldots,x_b]$. Now, $I$ is the mixed sum of $J$ and a linear ideal. The conclusion follows from the Auslander-Buchsbaum formula and \cite[Theorem 1.1]{NV}. 
\end{proof}
To compute the regularity of monomial ideals, we use the following result of Minh, Nam, Phong, Thuy, and Vu \cite{MNPTV}. We first introduce some notation. For an exponent $\a = (a_1, \ldots, a_n) \in \NN^n$, we set $x^\a = x_1^{a_1} \cdots x_n^{a_n}$ and $|\a| = a_1 + \cdots + a_n$. The support of $\a$ is 
$$\supp \a = \{ i \in [n] \mid a_i \neq 0\}.$$

\begin{lem}\label{Key0}
Let $I$ be a monomial ideal in $S$. Then
\begin{multline*}
\reg(S/I)=\max\{|\a|+i \mid \a\in\NN^n,i\ge 0,\h_{i-1}(\lk_{\D_\a(I)}F;\k)\ne 0\\ \text{ for some $F\in \D_\a(I)$ with $F\cap \supp \a=\emptyset$}\},
\end{multline*}
where $\D_\a(I) = \Delta(\sqrt{I:x^\a})$ is the degree complex of $I$ in degree $\a$. 
\end{lem}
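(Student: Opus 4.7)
The plan is to derive the formula directly from Takayama's multigraded formula for the local cohomology of $S/I$, combined with the standard identity
\[
\reg(S/I) \;=\; \max\{\,q + |\b| \mid \b \in \ZZ^n,\ H^q_\m(S/I)_\b \neq 0\,\},
\]
valid because $S/I$ is $\ZZ^n$-graded, hence so is its local cohomology. The substantive content is then to translate Takayama's combinatorial complex into a link of a face inside the degree complex $\D_\a(I)$.

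First I would invoke Takayama's theorem: $H^q_\m(S/I)_\b = 0$ unless every coordinate $b_p$ lies in $\{-1\}\cup\NN$, in which case
\[
\dim_\k H^q_\m(S/I)_\b \;=\; \dim_\k \h_{q-|G_\b|-1}(\Delta(I,\b);\k),
\]
where $G_\b = \{p : b_p = -1\}$ and a subset $G\subseteq [n]\setminus G_\b$ is a face of $\Delta(I,\b)$ precisely when, for every minimal generator $x^\mu$ of $I$, there is some $p\notin G\cup G_\b$ with $\mu_p > b_p$. Every such $\b$ factors uniquely as $\b = \a - \mathbf{1}_F$ with $\a = \b^+\in\NN^n$ and $F = G_\b$ satisfying $F\cap \supp\a = \emptyset$. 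Setting $i = q - |F|$, the quantity $q+|\b|$ becomes $i + |\a|$, and the nonvanishing condition becomes $\h_{i-1}(\Delta(I,\b);\k)\neq 0$.

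The heart of the proof is the identification $\Delta(I,\b) = \lk_{\D_\a(I)} F$. Unwinding $x^\nu\in\sqrt{I:x^\a}$ as the existence of a minimal generator $x^\mu$ of $I$ with $\mu_p\le a_p$ for every $p\notin\supp\nu$, one checks that $G\cup F \in \D_\a(I) = \Delta(\sqrt{I:x^\a})$ translates precisely to: for every minimal generator $x^\mu$ of $I$ there is some $p\notin G\cup F$ with $\mu_p > a_p$. Since $F\cap\supp\a = \emptyset$ forces $b_p = a_p$ for $p\notin F$, this is exactly the face condition defining $\Delta(I,\b)$; the two complexes thus coincide, and the requirement $F\in\D_\a(I)$ is automatic, since otherwise the link is empty and contributes no nonzero homology.

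Putting everything together yields
\[
\reg(S/I) = \max\{\,|\a|+i \mid \a\in\NN^n,\ F\in\D_\a(I),\ F\cap\supp\a=\emptyset,\ \h_{i-1}(\lk_{\D_\a(I)} F;\k)\neq 0\,\},
\]
and the constraint $i\geq 0$ is automatic because $\h_{i-1}$ vanishes for $i\leq -1$, matching the stated formula. The main obstacle is the identification in the previous paragraph: the translation of $\sqrt{I:x^\a}$ into the explicit face condition of $\D_\a(I)$, together with the careful index bookkeeping matching cohomological degree $q$ in local cohomology to homological degree $i-1 = q-|F|-1$ in the simplicial complex under the substitution $\b = \a - \mathbf{1}_F$.
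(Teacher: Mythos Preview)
Your derivation from Takayama's formula is essentially the argument carried out in \cite{MNPTV} (their Lemmas~2.12 and~2.19), which is exactly what the paper cites; so your approach and the paper's coincide. The identification $\Delta(I,\b)=\lk_{\D_\a(I)}F$ and the index bookkeeping $q+|\b|=i+|\a|$ under $\b=\a-\mathbf{1}_F$ are correct.

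One inaccuracy to fix: it is \emph{not} true that $H^q_\m(S/I)_\b=0$ whenever some $b_p<-1$. Already for Stanley--Reisner rings, Hochster's formula shows local cohomology can be nonzero in degrees with arbitrarily negative coordinates. What you actually need (and what suffices) is the weaker statement that the \emph{maximum} of $q+|\b|$ over nonvanishing degrees is attained at some $\b$ with every $b_p\ge -1$. This follows because, in your own description of Takayama's complex, the face condition ``there exists $p\notin G\cup G_\b$ with $\mu_p>b_p$'' involves $b_p$ only for $p\notin G_\b$, where $b_p=(\b^+)_p$; hence $\Delta(I,\b)$ depends only on $\b^+$ and on the set $G_\b$, not on how negative the coordinates are. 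Replacing each $b_p<-1$ by $-1$ therefore leaves the complex and the homological index $q-|G_\b|-1$ unchanged while strictly increasing $|\b|$. With this correction in place, your proof goes through as written.
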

\begin{proof}
    Follows from Lemma 2.12 and Lemma 2.19 of \cite{MNPTV}.
\end{proof}
Note that $\Delta(J)$ is the Stanley-Reisner complex corresponding to a squarefree monomial ideal $J$ (see the next section for more detail). 

\begin{defn} A pair $(\a,i) \in \NN^n \times \NN$ is called a {\it critical pair} of $I$ if there exists a face $F$ of $\D_\a(I)$ with $F \cap \supp \a = \emptyset$ and an index $i$ such that $\h_{i-1}(\lk_{\D_\a(I)} F;\k) \neq 0$. It is called an {\it extremal pair} of $I$ if furthermore, $\reg S/I = |\a| + i$.    

An exponent $\a \in \NN^n$ is called a {\it critical exponent} (respectively an {\it extremal exponent}) of $I$ if $(\a,i)$ is a critical pair (respectively an extremal pair) of $I$ for some $i \ge 0$.
\end{defn}
When $(\a,i)$ is a critical pair (respectively extremal pair) of $I$, we also call $(x^\a,i)$ a critical pair (respectively extremal pair) of $I$. 

For a monomial $f$ in S and $i \in [n] = \{1, \ldots, n\}$, $\deg_i (f)  = \max (t \mid  x_i^t \text{ divides } f)$ denotes the degree of $x_i$ in $f$. For a monomial ideal $I$, $\rho_i(I)$ is defined by
$$ \rho_i(I) = \max (\deg_i (u) \mid u \text{ is a minimal monomial generator of } I).$$

By \cite[Remark 2.13]{MNPTV}, we have
\begin{rem}\label{rem_critical_sets} A critical exponent of a monomial ideal $I$ belongs to the finite set
$$\Gamma (I) = \{\a \in \NN^n \mid a_j < \rho_j(I) \text{ for all } j = 1, \ldots, n\}.$$
\end{rem}

We now demonstrate a situation where one can reduce the computation of the regularity of a monomial ideal $I$ by adding a variable.

\begin{lem}\label{lem_regularity_equality_adding_variable} Let $I$ be a monomial ideal. Assume that $(\a,i)$ is an extremal pair of $I$ and $F$ is a face of $\Delta_\a(I)$ such that $F \cap \supp \a = \emptyset$ and $\h_{i-1}(\lk_{\Delta_\a(I)} F;\k) \neq 0$. Let $x$ be a variable in $F$. Then $\reg S/I = \reg S/(I,x)$.    
\end{lem}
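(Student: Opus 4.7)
The plan is to deduce the equality from the two bounds. Lemma \ref{lem_depth_reg_adding_variable}(2) already gives $\reg S/I \ge \reg S/(I,x)$, so everything reduces to producing, via Lemma \ref{Key0}, a critical pair for $(I,x)$ of weight $|\a|+i$. The candidate I would try is the \emph{same} exponent $\a$, the same homological degree $i$, and the face $F' := F\setminus\{x\}$.

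First I would identify the degree complex $\Delta_\a((I,x))$. Writing $x=x_k$, we have $k\in F$ and $F\cap \supp\a=\emptyset$, so $a_k=0$. Hence $x^\a$ and $x_k$ are coprime, which gives $(I,x_k):x^\a=(I:x^\a)+(x_k)$ and therefore
$$\Delta_\a\bigl((I,x_k)\bigr)=\Delta':=\{G\in \Delta_\a(I)\mid k\notin G\}.$$
Since $F\in\Delta_\a(I)$ and $\Delta_\a(I)$ is a simplicial complex, $F'\subset F$ is also in $\Delta_\a(I)$, so $F'\in\Delta'$, and clearly $F'\cap\supp\a=\emptyset$. It remains to prove $\h_{i-1}(\lk_{\Delta'}F';\k)\neq 0$.

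For this step I would compare the links inside $\Delta_\a(I)$ and inside $\Delta'$ via Mayer--Vietoris. Set $K=\lk_{\Delta_\a(I)}F'$. Then $K$ decomposes as $K=\st_K(k)\cup K'$, where $K'=\{G\in K\mid k\notin G\}=\lk_{\Delta'}F'$, and one checks directly that $\st_K(k)\cap K'=\lk_K(k)=\lk_{\Delta_\a(I)}F$. Because $\st_K(k)$ is a cone with apex $k$, it is contractible, and the reduced Mayer--Vietoris sequence collapses to
$$\cdots\longrightarrow \h_i(K;\k)\longrightarrow \h_{i-1}(\lk_K(k);\k)\longrightarrow \h_{i-1}(K';\k)\longrightarrow\cdots.$$

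The final step is to invoke the extremality of $(\a,i)$ to kill $\h_i(K;\k)$. Indeed, if $\h_i(K;\k)=\h_i(\lk_{\Delta_\a(I)}F';\k)\neq 0$, then $(\a,i+1)$ would be a critical pair for $I$ witnessed by the face $F'$ (which satisfies $F'\cap\supp\a=\emptyset$), and Lemma \ref{Key0} would force $\reg S/I\ge |\a|+i+1$, contradicting $\reg S/I=|\a|+i$. So $\h_i(K;\k)=0$, the connecting map becomes injective, and the assumed nonvanishing of $\h_{i-1}(\lk_K(k);\k)=\h_{i-1}(\lk_{\Delta_\a(I)}F;\k)$ transports to $\h_{i-1}(K';\k)\neq 0$. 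Hence $(\a,i)$ is a critical pair for $(I,x)$ with face $F'$, and Lemma \ref{Key0} yields $\reg S/(I,x)\ge |\a|+i=\reg S/I$.

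The main obstacle I anticipate is bookkeeping rather than conceptual: verifying the identifications $\Delta_\a((I,x))=\Delta'$ and $\st_K(k)\cap K'=\lk_{\Delta_\a(I)}F$ must be done carefully, and one must check that the Mayer--Vietoris input $\h_i(K;\k)$ is precisely what extremality controls via the \emph{same} face $F'$ (not $F$). Once this alignment is in place, the contradiction in the extremality step is immediate from Lemma \ref{Key0}.
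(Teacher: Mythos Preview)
Your proof is correct and shares the paper's overall strategy: exhibit $(\a,i)$ as a critical pair of $J=(I,x)$ via the face $F'=F\setminus\{x\}$, then sandwich with Lemma~\ref{lem_depth_reg_adding_variable} and Lemma~\ref{Key0}. The paper, however, obtains the nonvanishing $\h_{i-1}(\lk_{\Delta_\a(J)}F';\k)\neq 0$ by simply asserting the equality $\lk_{\Delta_\a(I)}F=\lk_{\Delta_\a(J)}F'$. In general only the inclusion $\subseteq$ holds (for instance $\Delta=\langle\{1,2\},\{3\}\rangle$, $x=1$, $F=\{1\}$ gives links $\langle\{2\}\rangle\subsetneq\langle\{2\},\{3\}\rangle$), so your Mayer--Vietoris decomposition of $K=\lk_{\Delta_\a(I)}F'$ into $\st_K(x)\cup K'$, together with the use of extremality to kill $\h_i(K;\k)$, is precisely the ingredient needed to push the nonvanishing from $\lk_{\Delta_\a(I)}F$ to the larger complex $\lk_{\Delta_\a(J)}F'$. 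In short: same route, but your argument supplies the justification the paper's shortcut glosses over.
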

\begin{proof}
    Let $J = I + (x)$. Since $x\in F$, $x \notin \supp \a$. By \cite[Lemma 2.24]{MNPTV}, $\sqrt{J:x^\a} = \sqrt{I:x^\a} + (x)$. In other words, $\Delta_\a(J)$ is the restriction of $\Delta_\a(I)$ to $[n]\setminus \{x\}$. Let $F' = F \setminus \{x\}$. Then, $\lk_{\Delta_\a(I)} F = \lk_{\Delta_\a(J)} F'$. Thus, $(\a,i)$ is a critical pair of $J$. By Lemma \ref{lem_depth_reg_adding_variable} and Lemma \ref{Key0}, 
    $$\reg S/J \ge |\a| + i = \reg S/I \ge \reg S/J.$$ 
    The conclusion follows.
\end{proof}

Since we will compute the depth and regularity of monomial ideals via their associated radicals, we now introduce the notion of simplicial complexes and Stanley-Reisner correspondence.

\subsection{Simplicial complexes and Stanley-Reisner correspondence} 

Let $\Delta$ be a simplicial complex on $[n]=\{1,\ldots, n\}$ that is a collection of subsets of $[n]$ closed under taking subsets. The set of its maximal elements under inclusion, called facets, is denoted by $\F(\Delta)$.

A simplicial complex $\D$ is called a cone over $x\in [n]$ if $x\in B$ for any $B\in \F(\Delta)$.

For a face $F\in\Delta$, the link of $F$ in $\Delta$ is the subsimplicial complex of $\Delta$ defined by
$$\lk_{\Delta}F=\{G\in\Delta \mid  F\cup G\in\Delta, F\cap G=\emptyset\}.$$

For each subset $F$ of $[n]$, let $x_F=\prod_{i\in F}x_i$ be a squarefree monomial in $S$. We now recall the Stanley-Reisner correspondence.

\begin{defn}For a squarefree monomial ideal $I$, the Stanley-Reisner complex of $I$ is defined by
$$ \Delta(I) = \{ F \subseteq [n] \mid x_F \notin I\}.$$

For a simplicial complex $\Delta$, the Stanley-Reisner ideal of $\Delta$ is defined by
$$I_\Delta = (x_F \mid  F \notin \Delta).$$
The Stanley-Reisner ring of $\Delta$ is $\k[\Delta] =  S/I_\Delta.$
\end{defn}

\begin{defn} The $q$-th reduced homology group of $\Delta$ with coefficients over $\k$, denoted $\h_q(\Delta; \k)$ is defined to be the $q$-th homology group of the augmented oriented chain complex of $\Delta$ over $\k$.
\end{defn}
A simplicial complex $\D$ is called {\it acyclic} if $\h_i(\Delta;\k) = 0$ for all $i$.
\begin{rem} Let $\Delta$ be a simplicial complex. Then \begin{enumerate}
    \item $\h_{-1}(\Delta;\k) \neq 0$ if and only if $\Delta$ is the empty complex (i.e., $\Delta=\{\emptyset\}$).
    \item If $\Delta$ is a cone over some $t \in [n]$ or $\Delta$ is the void complex  (i.e., $\Delta=\emptyset$), then it is acyclic.
\end{enumerate}
\end{rem}

\subsection{Edge ideals of edge-weighted graphs and their associated radicals} 

Let $G$ denote a finite simple graph over the vertex set $V(G) = [n] = \{1,\ldots,n\}$ and the edge set $E(G)$. For a vertex $x \in V(G)$, $N_G(x) = \{y \in V(G) \mid \{x,y\} \in E(G)\}$ denotes the set of neighbours of $x$. For a subset $U$ of vertices of $G$, $N_G(U) = \cup_{u \in U} N_G(u)$ denotes the set of all neighbours of $U$. The induced subgraph of $G$ on $U$, denoted by $G_U$, is a graph whose vertex set is $U$ and for any $u,v \in U$, $\{u,v\}$ is an edge of $G_U$ if and only if it is an edge of $G$. We refer to \cite{Di} for more information on graph theory.

The edge ideal of $G$ is defined to be
$$I(G)=(x_ix_j \mid \{i,j\}\in E(G))\subseteq S.$$

Let $\ww:E(G)\rightarrow \ZZ_{>0}$ be a weight function on edges of $G$. Paulsen and Sather-Wagstaff \cite{PS} introduced and studied the edge ideals of edge-weighted graphs of $G$
$$I(G_\ww) = \big( (x_i x_j)^{w(i,j)} \mid \{i,j\}\in E(G)\big) \subseteq S.$$
When $I(G_\ww)$ is Cohen-Macaulay, we call $(G,\ww)$ a Cohen-Macaulay edge-weighted graph. We have the following properties of associated radicals of edge ideals of edge-weighted graphs.
\begin{lem}\label{lem_associated_radicals_edge_weight} Let $G_{\ww}$ be a graph with a weight function $\ww: E(G) \to \NN$. For any $x^\a$, let $U=\{i \mid \text{there exists } j \text{ such that } \{i,j\}\in E(G) \text{ and }  a_i <  w(i,j) \le a_j\}$. Then  
$$\sqrt{I(G_{\ww}) : x^\a} = I(G\backslash U) + (x_i \mid i \in U),$$
where $I(G\backslash U)$ is the edge ideal of the induced subgraph of $G$ on $[n] \backslash U$.
\end{lem}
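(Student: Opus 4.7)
The plan is a direct computation: compute $(I(G_\ww) : x^\a)$ one generator at a time, then pass to the radical, then verify the two inclusions. Throughout I keep in mind that the lemma will be applied in the context of Hochster's theorem (Theorem \ref{thm_Hochster_depth}) and Lemma \ref{Key0}, so I may assume $x^\a \notin I(G_\ww)$; otherwise the left-hand side is the unit ideal, while the right-hand side need not be.

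\textbf{Step 1: compute the colon edge by edge.} For monomials $u,v$ one has $(u):v = (u/\gcd(u,v))$. Applied to the edge generator $(x_ix_j)^{w(i,j)} = x_i^{w(i,j)}x_j^{w(i,j)}$, this gives
$$\bigl((x_ix_j)^{w(i,j)} : x^\a\bigr) = \bigl(x_i^{\max(0,\,w(i,j)-a_i)}\, x_j^{\max(0,\,w(i,j)-a_j)}\bigr).$$
Since the radical of a monomial ideal is generated by the square-free parts of its monomial generators, $\sqrt{I(G_\ww) : x^\a}$ is the sum, over all edges $\{i,j\} \in E(G)$, of the radicals of the principal ideals above.

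\textbf{Step 2: case analysis per edge.} For each edge $\{i,j\}$, set $\epsilon_i=1$ if $a_i<w(i,j)$ and $\epsilon_i=0$ otherwise, and define $\epsilon_j$ analogously. Then the radical contribution of the edge $\{i,j\}$ is $(x_i^{\epsilon_i}x_j^{\epsilon_j})$. The case $\epsilon_i=\epsilon_j=0$ is equivalent to $(x_ix_j)^{w(i,j)}$ dividing $x^\a$, i.e.\ $x^\a\in I(G_\ww)$, which is excluded. Thus three cases remain: (a) $\epsilon_i=\epsilon_j=1$, with radical $(x_ix_j)$; (b) $\epsilon_i=1$, $\epsilon_j=0$, giving $(x_i)$ and forcing $i\in U$ by the very definition of $U$; and (c) the symmetric counterpart of (b).

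\textbf{Step 3: verify the two inclusions.} For $\subseteq$, the contribution from case (b) or (c) already lies in $(x_i \mid i\in U)$. In case (a), either some endpoint of $\{i,j\}$ is in $U$ (via some other edge), in which case $x_ix_j$ is a multiple of the corresponding $x_k$, or both $i,j\notin U$, in which case $\{i,j\}$ is an edge of $G\setminus U$ and $x_ix_j \in I(G\setminus U)$. For $\supseteq$, every $i\in U$ has, by definition, a witnessing edge $\{i,j\}$ with $a_i<w(i,j)\le a_j$, producing $x_i$ in the radical. Every edge $\{i,j\}$ of $G\setminus U$ has both endpoints outside $U$, so cases (b) and (c) are ruled out by the definition of $U$, while the case $\epsilon_i=\epsilon_j=0$ is ruled out by $x^\a\notin I(G_\ww)$; the only remaining possibility is case (a), which delivers $x_ix_j$. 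The argument is essentially bookkeeping and poses no substantive obstacle; the only subtlety is making sure to exclude the degenerate case $\epsilon_i=\epsilon_j=0$ via the hypothesis $x^\a \notin I(G_\ww)$.
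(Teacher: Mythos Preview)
Your proof is correct and follows essentially the same edge-by-edge approach as the paper's proof; the paper compresses Steps 1--2 by citing \cite[Lemma 2.24]{MNPTV} for the shape of the generators of $\sqrt{I(G_\ww):x^\a}$, while you spell this out directly. You are also right to flag the hypothesis $x^\a \notin I(G_\ww)$: without it the left-hand side is the unit ideal whereas the right-hand side need not be, and although the paper's statement omits this hypothesis, the lemma is only ever invoked in that setting.
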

\begin{proof} Let $J = \sqrt{I(G_{\ww}) : x^\a}$. By \cite[Lemma 2.24]{MNPTV}, generators of $J$ are $x_ix_j$ with $x_i x_j \in I$ and $x_i$ for some variables $i$. Now $x_i \in J$ if and only if there exists an index $j$ such that 
$$x_i = \sqrt{(x_ix_j)^{w(i,j)}/\gcd((x_ix_j)^{w(i,j)} ,x^\a) }$$
In particular, we must have $a_i < w(i,j) \le a_j$. The conclusion follows.
\end{proof}

\begin{lem}\label{lem_associated_rad_1} Let $G_{\ww}$ be an edge-weighted graph. Assume that $J = \sqrt{I(G_{\ww}):x^\a}$ is an associated radical of $I(G_{\ww})$ and $K$ is an associated radical of $J$. Then $K$ is also an associated radical of $I(G_{\ww})$. 
\end{lem}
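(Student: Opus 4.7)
The plan is to produce an explicit exponent $\c$ such that $\sqrt{I(G_\ww):x^\c} = K$, so that $K$ is realized as an associated radical of $I(G_\ww)$.

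First, I would use Lemma~\ref{lem_associated_radicals_edge_weight} to describe $J$ and $K$ explicitly. Write $J = I(G \setminus U) + (x_i : i \in U)$ with $U = U_\a$. Since $J$ is squarefree, $K = J:x_V$ with $V = \supp \b$, and a short computation on generators yields
\[
K = I(G \setminus W) + (x_i : i \in W), \qquad W = U \cup N_{G \setminus U}(V).
\]
Along the way I record two consequences of $x^\b \notin J$ used throughout: $V \cap U = \emptyset$ (else some $x_v \in J$ divides $x^\b$) and $V$ is independent in $G \setminus U$ (else some $x_v x_{v'} \in J$ divides $x^\b$); combined, these upgrade to $V$ being independent in $G$ itself.

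Next, fix $M \ge \max_e w(e)$ and set $c_v = M$ for $v \in V$ and $c_v = a_v$ otherwise. The central claim is $U_\c = W$, which by Lemma~\ref{lem_associated_radicals_edge_weight} gives $\sqrt{I(G_\ww):x^\c} = K$. The inclusion $U \subseteq U_\c$ is automatic, since any $U_\a$-witness $j$ for $i$ still satisfies $c_i \le a_i < w(i,j) \le a_j \le c_j$. The heart of the matter is $N_{G\setminus U}(V) \setminus U \subseteq U_\c$: for $i$ with neighbour $v \in V$ in $G \setminus U$, the three facts $i \notin U$, $v \notin U$, and $x^\a \notin I(G_\ww)$ rule out respectively the patterns $a_i < w \le a_v$, $a_v < w \le a_i$, and $a_i, a_v \ge w$ at the edge $\{i,v\}$, and jointly force $a_i < w(i,v)$; combined with $c_v = M \ge w(i,v)$ this gives $i \in U_\c$ with witness $v$. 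The reverse inclusion $U_\c \subseteq W$ is a short case split on whether the witness of $i$ lies in $V$, and $i \in V$ is ruled out because $c_i = M$ fails the strict inequality on the left.

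Finally, I would check the admissibility condition $x^\c \notin I(G_\ww)$. If some edge $e = \{i,j\}$ satisfied $(x_ix_j)^{w(e)} \mid x^\c$, the case $e \subseteq V$ contradicts independence of $V$ in $G$; the case $e \cap V = \emptyset$ contradicts $x^\a \notin I(G_\ww)$; and in the mixed case $i \in V, j \notin V$ one uses $c_j = a_j \ge w(e)$ together with $i \notin U$ applied to $e$ to force $a_i \ge w(e)$, again yielding $x^\a \in I(G_\ww)$. The main obstacle I anticipate is packaging these case analyses cleanly — each individual inequality is elementary, but the bookkeeping has to simultaneously respect $x^\a \notin I$, $x^\b \notin J$, membership or non-membership in $U$, and the definition of $\c$ without circularity.
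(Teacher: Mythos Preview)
Your proposal is correct and follows essentially the same approach as the paper: both construct $\c$ by keeping $a_i$ outside $\supp\b$ and boosting the coordinates in $\supp\b$ to a large value (the paper uses $\max\{w(i,j):j\in N(i)\}$ where you use a uniform $M$), then verify $x^\c\notin I(G_\ww)$ and $\sqrt{I(G_\ww):x^\c}=K$. Your case analysis is in fact more carefully spelled out than the paper's; the only cosmetic slip is the chain $c_i\le a_i<w(i,j)\le a_j\le c_j$ in your Step~1 inclusion, where $a_j\le c_j$ need not hold if $j\in V$ and $a_j>M$ --- but the needed inequality $w(i,j)\le c_j$ follows directly from $c_j=M\ge w(i,j)$ in that case, so nothing is lost.
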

\begin{proof}
Since $J$ is radical, we may assume that $K = J : x^\b$. Let $\c$ be an exponent such that 
$$c_i = \begin{cases} \max \{ w(i,j) \mid j \in N(i) \} & \text{ if } i \in \supp \b\\
a_i &\text{ otherwise}.\end{cases}$$
We first claim that $x^{\c} \notin I(G_{\ww})$. By definition of $\c$, we need to prove that $x_i^{a_i} x_j^{c_j} \notin I(Y)$ for all $i \in \supp \a$ and all $j \in \supp \b$. Indeed, since $j \in \supp \b$, $x_j \notin J = \sqrt{I(G_\ww):x^\a}$. Hence, $a_i < w(i,j)$ for all $j \in N(i)$. Thus, $x_i^{a_i} x_j^{c_j} \notin I(G_{\ww})$. 

Let 
$$ U= \{i \mid \text{ there exists } j \text{ such that } a_i < w(i,j) \le a_j\}.$$
By Lemma \ref{lem_associated_radicals_edge_weight}, $J = I(G_{\ww}) + (x_i \mid i \in U)$. Thus, 
$$\sqrt{I(G_{\ww}) : x^\c} = I(G_{\ww}) + (x_i \mid i \in U \cup N(\supp \b)) = J : x^\b.$$
The conclusion follows.
\end{proof}

\begin{lem}\label{lem_associated_rad_2} Let $G_{\ww}$ be an edge-weighted graph. Assume that $\sqrt{I(G_{\ww}):x^\a} = I(G_{\ww}) + (x_i \mid i \in U)$ for some $U \subseteq [n]$. Let $I(G_{\ww}') = I(G_{\ww}) + (x_i \mid i \in U)$. Then an associated radical of $I(G_{\ww}')$ is also an associated radical of $I(G_{\ww})$. In particular, $\depth S/I(G_{\ww}) \le \depth S/I(G_{\ww}')$.    
\end{lem}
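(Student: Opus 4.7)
The plan is to derive Lemma \ref{lem_associated_rad_2} as a short consequence of Lemma \ref{lem_associated_rad_1} together with Hochster's formula (Theorem \ref{thm_Hochster_depth}). First I would observe that the ideal $J := I(G_{\ww}') = I(G_{\ww}) + (x_i \mid i \in U)$ is radical, either directly from the hypothesis $J = \sqrt{I(G_{\ww}):x^\a}$, or from Lemma \ref{lem_associated_radicals_edge_weight}, which realizes $J = I(G\setminus U) + (x_i \mid i \in U)$ as a manifestly squarefree ideal.

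Next I would pick an arbitrary associated radical $K$ of $I(G_{\ww}')$; by definition $K = \sqrt{J:x^\b}$ for some monomial $x^\b \notin J$. Since $J$ is radical, $J:x^\b$ is also radical (the standard one-line check: $g^n f \in J$ implies $(gf)^n \in J$, hence $gf \in J$), so $K = J:x^\b = \sqrt{J:x^\b}$. In other words, $K$ is an associated radical of $J$ in the sense needed by Lemma \ref{lem_associated_rad_1}. Applying that lemma to the chain $J = \sqrt{I(G_{\ww}):x^\a}$ and $K = \sqrt{J:x^\b}$ immediately yields that $K$ is an associated radical of $I(G_{\ww})$, which is the first assertion.

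For the depth inequality, the previous step exhibits the set of associated radicals of $I(G_{\ww}')$ as a subset of the set of associated radicals of $I(G_{\ww})$. Hochster's theorem then gives
\begin{align*}
\depth S/I(G_{\ww}) &= \min\{\depth S/K \mid K \text{ is an associated radical of } I(G_{\ww})\}\\
&\le \min\{\depth S/K \mid K \text{ is an associated radical of } I(G_{\ww}')\}\\
&= \depth S/I(G_{\ww}'),
\end{align*}
since passing to a subset can only increase the minimum. There is essentially no obstacle: the only mild subtlety is verifying that associated radicals of the radical ideal $J$ are themselves in radical form, which is the short quotient-of-radical argument above, after which the conclusion is purely formal from Lemma \ref{lem_associated_rad_1} and Theorem \ref{thm_Hochster_depth}.
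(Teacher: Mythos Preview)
Your argument is correct. Once you note that $J = I(G_{\ww}') = \sqrt{I(G_{\ww}):x^\a}$ is by hypothesis an associated radical of $I(G_{\ww})$, Lemma \ref{lem_associated_rad_1} applies verbatim: any associated radical $K$ of $J$ is an associated radical of $I(G_{\ww})$. The depth inequality then follows from Theorem \ref{thm_Hochster_depth} exactly as you wrote. Your side observation that $J$ is radical, so $\sqrt{J:x^\b} = J:x^\b$, is true but not even needed, since Lemma \ref{lem_associated_rad_1} is stated for associated radicals $\sqrt{J:x^\b}$ in any case.

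The paper takes a different route: rather than invoking Lemma \ref{lem_associated_rad_1}, it gives a self-contained argument by setting $\c = \max(\a,\b)$ and checking directly that $x^{\c}\notin I(G_{\ww})$ and $\sqrt{I(G_{\ww}):x^{\c}} = K$, using the explicit description from Lemma \ref{lem_associated_radicals_edge_weight}. This yields a concrete exponent $\c$ witnessing $K$ as an associated radical of $I(G_{\ww})$, different from the exponent produced inside the proof of Lemma \ref{lem_associated_rad_1}. Your approach is shorter and avoids duplicating the verification; the paper's approach is independent of Lemma \ref{lem_associated_rad_1} and provides an explicit $\c$ tied to $\a$ and $\b$.
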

\begin{proof}Let $K = \sqrt{I(G_{\ww}'):x^\b}$ be an associated radical of $I(G_{\ww}')$. Let $\c = \max(\a,\b)$, i.e., $c_i = \max(a_i,b_i)$ for all $i = 1, \ldots, n$. We first prove that $x^{\c} \notin I(G_{\ww})$. By definition, it suffices to prove that $x_i^{a_i} x_j^{b_j} \notin I$ for all $i \in \supp \a$ and $j \in \supp \b$ such that $a_i \ge b_i$ and $a_j \le b_j$. Since $x_j^{b_j} \notin I(G_{\ww}')$, $a_i < w(i,j)$. Hence, $x^{a_i} x_j^{b_j} \notin I(Y)$. 

Let $V = \{i \mid \text{ there exists } j \text{ such that } w(i,j) \le b_j\}$. By Lemma \ref{lem_associated_radicals_edge_weight}, 
$$K = I(G_{\ww}') + (x_i \mid i \in V) = I(G_{\ww}) + (x_i \mid i \in U \cup V) =  \sqrt{I(G_{\ww}):x^{\c}}.$$
The last statement follows from the first statement and Theorem \ref{thm_Hochster_depth}.
\end{proof}

\subsection{Ferrers ideals and their depth and regularity}\label{subsection_Ferrers} 
Let $\lambda=(\lambda_1,\lambda_2,\ldots,\lambda_n)$ be a partition with $m = \lambda_1 \ge \lambda_2 \ge \ldots \ge \lambda_n \ge 1$. We denote by $\mu$ the conjugate partition. The Ferrers graph $G_\lambda$ is a bipartite graph on $X = \{x_1, \ldots, x_n\}$ and $Y = \{y_1,\ldots,y_m\}$ with $N(x_i) = \{y_1, \ldots, y_{\lambda_i}\}$ and $N(y_j) = \{x_1, \ldots, x_{\mu_j}\}$ for all $i = 1, \ldots, n$ and all $j = 1, \ldots, m$. The Ferrers ideal associated with the partition $\lambda$ is edge ideal of the Ferrers graph inside the polynomial ring $S = \k[x_1,\ldots,x_n,y_1,\ldots,y_m]$
$$I_{\lambda} = (x_iy_j \mid 1 \le i \le n, 1 \le j \le \lambda_i).$$
Corso and Nagel \cite{CN} computed all the Betti numbers of $I_\lambda$. In particular, they deduce the following results about the depth and regularity of $I_\lambda$.

\begin{prop} {\rm \cite[Corollary   2.2]{CN}} \label{CN_Ilambda}
	\begin{enumerate}
		\item $\height(I_{\lambda}) = \min\{\min_j \{\lambda_j + j - 1\}, n\}$,
		\item $\pd(S/I_{\lambda}) =  \max_j \{\lambda_j + j - 1\}$
		\item $\reg(S/I_{\lambda})=1$.
	\end{enumerate}  
\end{prop}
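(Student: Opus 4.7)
The plan is to establish statements (3), (1), (2) in that order, built around a linear-quotients analysis of the Ferrers ideal.

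For (3), I would verify that $I_\lambda$ admits linear quotients with respect to the lexicographic ordering of its generators $u_{ij}=x_iy_j$ (with $1\le i\le n$ and $1\le j\le\lambda_i$). A direct calculation shows that, for each generator $u_{ij}$, the ideal quotient $(u_{i'j'}\mid (i',j')\prec_{\mathrm{lex}}(i,j)):u_{ij}$ is minimally generated by $\{x_{i'}\mid i'<i\}\cup\{y_{j'}\mid j'<j\}$. The decisive input is the weakly decreasing condition $\lambda_i\le\lambda_{i'}$ for $i'<i$, which ensures that the ``corner'' generators $u_{i'j}$ and $u_{ij'}$ lie in the generating set and supply the variables $x_{i'}$ and $y_{j'}$ respectively, while the off-diagonal contributions $x_{i'}y_{j'}$ with $i'<i$, $j'<j$ are redundant modulo $x_{i'}$. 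Linear quotients imply a linear resolution, so $\reg(S/I_\lambda)=1$.

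For (1), $\height I_\lambda$ equals the minimum vertex-cover number of the Ferrers graph $G_\lambda$. I would argue that in any minimum cover $C$, if $x_i\in C$ then $x_{i'}\in C$ for all $i'\le i$, and symmetrically for the $y$'s. Indeed, if some $x_{i'}\notin C$ with $i'<i$, then every edge $\{x_{i'},y_l\}$ with $l\le\lambda_{i'}$ is covered by $y_l\in C$, so $\{y_1,\ldots,y_{\lambda_{i'}}\}\subseteq C$; since $N(x_i)=\{y_1,\ldots,y_{\lambda_i}\}\subseteq\{y_1,\ldots,y_{\lambda_{i'}}\}$ (using $\lambda_i\le\lambda_{i'}$), the vertex $x_i$ is redundant in $C$, contradicting minimality. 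Hence $C=\{x_1,\ldots,x_{j-1}\}\cup\{y_1,\ldots,y_k\}$ for some $j,k$, and covering all remaining edges forces $k\ge\lambda_j$ (with the convention $\lambda_{n+1}=0$). Minimizing $(j-1)+k$ over valid pairs $(j,k)$ yields $\min\{\min_j(\lambda_j+j-1),n\}$.

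For (2), I would read off the projective dimension from the resolution supplied by the linear quotients of (3). Since the colon attached to each generator $u_{ij}$ is minimally generated by exactly $(i-1)+(j-1)=i+j-2$ variables, the Eliahou-Kervaire-type mapping-cone construction produces a minimal free resolution of $I_\lambda$ of length $\max_{(i,j)}(i+j-2)$, whence
\[
\pd(S/I_\lambda)=1+\max_{1\le i\le n,\,1\le j\le\lambda_i}(i+j-2)=\max_{1\le i\le n}(\lambda_i+i-1),
\]
as claimed. The main step requiring care is confirming minimality of the mapping-cone resolution assembled from linear quotients; this is standard for monomial ideals whose generators have uniform degree, and I would cite it rather than reprove it.
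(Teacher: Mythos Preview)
Your argument is correct. Note, however, that the paper does not supply its own proof of this proposition: it is quoted directly from Corso--Nagel \cite{CN} as a known result, so there is no in-paper argument to compare against. Your linear-quotients approach is in fact close in spirit to what Corso and Nagel do---they too exhibit an explicit minimal free resolution of $I_\lambda$ (phrased via a cellular/Eliahou--Kervaire-type description) and read off the Betti numbers, height, projective dimension, and regularity from it. The vertex-cover computation you give for (1) is a clean combinatorial alternative to extracting the height from the primary decomposition, and your identification of the colon ideals $(x_1,\ldots,x_{i-1},y_1,\ldots,y_{j-1})$ is exactly the data that drives both the regularity and projective-dimension conclusions. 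The only point worth flagging is the one you already noted: minimality of the mapping-cone resolution for monomial ideals with linear quotients whose colons are generated by variables is a standard fact (Herzog--Takayama), and citing it is appropriate.
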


\begin{prop} {\rm \cite[Corollary 2.8]{CN}} \label{CM_Ilambda}
	The following conditions are equivalent:
	\begin{enumerate}
		\item  $I_{\lambda}$  is a Cohen-Macaulay ideal.
				\item $n=m$ and $\lambda=(n,n-1, \ldots, 2,1)$; 
	\end{enumerate}
	
\end{prop}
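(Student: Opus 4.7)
The plan is to deduce the Cohen--Macaulay characterization directly from the depth and height formulae already given in Proposition \ref{CN_Ilambda}, via the Auslander--Buchsbaum formula. By definition, $I_\lambda$ is Cohen--Macaulay if and only if $\depth(S/I_\lambda) = \dim(S/I_\lambda)$. Since $S$ is a polynomial ring in $n+m$ variables, the Auslander--Buchsbaum formula combined with Proposition \ref{CN_Ilambda}(2) gives
$$\depth(S/I_\lambda) = n + m - \pd(S/I_\lambda) = n + m - \max_{j}\{\lambda_j + j - 1\},$$
while Proposition \ref{CN_Ilambda}(1) gives
$$\dim(S/I_\lambda) = n + m - \height(I_\lambda) = n + m - \min\bigl\{\min_{j}\{\lambda_j + j - 1\},\, n\bigr\}.$$
So $I_\lambda$ is Cohen--Macaulay if and only if
$$\max_{j}\{\lambda_j + j - 1\} = \min\bigl\{\min_{j}\{\lambda_j + j - 1\},\, n\bigr\}. \qquad (\ast)$$

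Next I would analyze $(\ast)$ combinatorially. Since $\max_j \ge \min_j$ always, equality in $(\ast)$ forces $\max_j\{\lambda_j + j - 1\} = \min_j\{\lambda_j + j - 1\}$, i.e., there is a constant $c$ with $\lambda_j + j - 1 = c$ for all $j = 1, \ldots, n$, and moreover $c \le n$. This yields $\lambda_j = c - j + 1$, which is automatically weakly decreasing. The condition $\lambda_n \ge 1$ forces $c \ge n$, hence $c = n$, and therefore $\lambda_j = n - j + 1$ for all $j$. In particular, $m = \lambda_1 = n$ and $\lambda = (n, n-1, \ldots, 2, 1)$, proving $(1) \Rightarrow (2)$.

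For the converse, when $\lambda = (n, n-1, \ldots, 1)$ and $m = n$, one checks directly that $\lambda_j + j - 1 = n$ for every $j$, so both sides of $(\ast)$ equal $n$, confirming the Cohen--Macaulay property. The main (and only mild) subtlety is handling the outer $\min$ with $n$ in the height formula, but this is immediately resolved once one observes that $(\ast)$ collapses to a single common value $c \le n$; after that the argument is just elementary partition combinatorics, so there is no serious obstacle beyond invoking Proposition \ref{CN_Ilambda}.
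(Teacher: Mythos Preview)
Your argument is correct: the paper does not supply its own proof of this proposition but simply cites \cite[Corollary~2.8]{CN}, and your derivation from Proposition~\ref{CN_Ilambda} via the Auslander--Buchsbaum formula is exactly the natural route (and essentially the one Corso and Nagel take). There is nothing to add.
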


The depth and regularity of powers of $I_\lambda$ are also well-known.
\begin{prop}  For all $t \ge 2$, we have 
$$\depth S/I_{\lambda}^t=1.$$
\end{prop}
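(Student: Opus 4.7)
The plan is to sandwich $\depth S/I_\lambda^t$ between $1$ and $1$ using two independent ingredients: the normally torsion-free property of bipartite edge ideals for the lower bound, and an analytic-spread plus stabilization argument for the upper bound.

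First, I would establish $\depth S/I_\lambda^t \ge 1$. Since $G_\lambda$ is bipartite, the theorem of Simis--Vasconcelos--Villarreal gives that $I_\lambda$ is normally torsion-free, hence $\Ass(S/I_\lambda^t) = \Ass(S/I_\lambda)$ for every $t \ge 1$. The associated primes of the squarefree ideal $I_\lambda$ are the monomial primes generated by minimal vertex covers of $G_\lambda$, and none of these equals the maximal ideal $\m$. Therefore $\m \notin \Ass(S/I_\lambda^t)$, which gives $\depth S/I_\lambda^t \ge 1$ for all $t$.

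For the upper bound $\depth S/I_\lambda^t \le 1$, I would exploit the connectivity of the Ferrers graph: since $\lambda_1 = m$, vertex $x_1$ is adjacent to every $y_j$, and since $\lambda_n \ge 1$, vertex $y_1$ is adjacent to every $x_i$. For a connected bipartite graph on $N$ vertices, the analytic spread of the edge ideal equals $N - 1$, giving $\ell(I_\lambda) = n + m - 1$. Normality of $I_\lambda$ (a consequence of being normally torsion-free) combined with Brodmann's theorem then yields
\[
\lim_{t \to \infty} \depth S/I_\lambda^t \;=\; \dim S - \ell(I_\lambda) \;=\; (n+m) - (n+m-1) \;=\; 1.
\]
To upgrade this asymptotic statement to ``depth equals $1$ for every $t \ge 2$'', I would verify $\depth S/I_\lambda^2 \le 1$ directly (e.g.\ by polarizing $I_\lambda^2$ and applying Reisner's criterion to identify a vertex in the polarized Stanley--Reisner complex whose link has nonvanishing reduced $H_1$), and then invoke the monotonicity of the depth function for powers of bipartite edge ideals (consequent on the stability of $\Ass$ under normally torsion-free).

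The main obstacle is the step from the asymptotic statement to the uniform statement ``for every $t \ge 2$''. Since $\Ass(S/I_\lambda^t)$ is stable but may consist entirely of primes of height strictly less than $n+m-1$ (for example when $\lambda = (2,1)$, where all three associated primes have height $2$, giving only $\depth \le 2$ from the associated-prime bound), the upper bound cannot come from $\Ass$ alone. One must explicitly construct a nonzero class in $H^1_\m(S/I_\lambda^t)$, and the cleanest such construction proceeds through the polarization of $I_\lambda^2$ and a link computation via Lemma~\ref{lem_depth_reg_colon} together with Hochster's formula for local cohomology of Stanley--Reisner rings.
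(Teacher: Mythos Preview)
Your lower bound argument is correct. The gap is in the monotonicity step: stability of $\operatorname{Ass}(S/I_\lambda^t)$ does \emph{not} imply that $t \mapsto \depth S/I_\lambda^t$ is non-increasing. Stability of associated primes yields only the constant upper bound $\depth S/I_\lambda^t \le \min\{\dim S/\p : \p \in \operatorname{Ass}(S/I_\lambda)\}$, which, as you yourself observe in the $\lambda=(2,1)$ example, can be strictly larger than $1$; it says nothing about how depth moves below that bound. Monotonicity of depth for powers of bipartite edge ideals is true, but it requires a separate argument---this is precisely the content of the paper's reference \cite{T}---and is not a consequence of Simis--Vasconcelos--Villarreal.

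The paper handles the $t=2$ upper bound by a one-line colon computation rather than polarization: since $x_1$ is adjacent to every $y_j$ and $y_1$ to every $x_i$, one has $(x_iy_j)(x_1y_1) = (x_iy_1)(x_1y_j) \in I_\lambda^2$ for all $i\in[n]$, $j\in[m]$, whence $I_\lambda^2 : (x_1y_1) = I(K_{X,Y})$, and Theorem~\ref{thm_Hochster_depth} gives $\depth S/I_\lambda^2 \le \depth S/I(K_{X,Y}) = 1$. The paper then also invokes \cite{T} to pass from $t=2$ to all $t \ge 2$. So once your monotonicity step is patched with the correct citation, your route and the paper's agree in architecture; but the colon trick is considerably shorter than the polarization-plus-Reisner argument you sketch, and it makes the analytic-spread detour unnecessary.
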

\begin{proof}It is easy to see that $I_\lambda^2 : (x_1y_1) = I(K_{X,Y})$, which is the edge ideal of a complete bipartite graph with bipartition $X \cup Y$. Hence, by Theorem \ref{thm_Hochster_depth}, $\depth S/I_\lambda^2 \le \depth S/I_\lambda^2 : (x_1 y_1) = 1$. The conclusion follows from \cite{T}.	 
\end{proof}

\begin{prop} For all $t\ge 2$, $I_{\lambda}^t$ has a linear free resolution. In particular, 
	 $\reg(S/I_{\lambda}^t) = 2t-1$. 
\end{prop}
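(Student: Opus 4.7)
The plan is to reduce the statement to a classical theorem of Herzog, Hibi, and Zheng, which asserts that a monomial ideal generated in degree two has a linear resolution if and only if every power of it has a linear resolution. The first step is to verify that $I_\lambda$ itself has a linear resolution, which is immediate from Proposition \ref{CN_Ilambda}(3): since $\reg(S/I_\lambda) = 1$, we have $\reg(I_\lambda) = 2$, and $I_\lambda$ is generated in degree $2$, which is precisely the condition of having a linear resolution. Invoking the Herzog-Hibi-Zheng theorem then yields at once that $I_\lambda^t$ has a linear resolution for every $t \ge 1$, which is the first assertion of the proposition.

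For the regularity statement, I would observe that the minimal generators of $I_\lambda^t$ all have degree exactly $2t$: any generator arises as a product of $t$ minimal (degree-$2$) generators of $I_\lambda$, and conversely no generator can have smaller degree since $I_\lambda^t$ is contained in the $2t$-th power of the maximal ideal. A linear resolution of a monomial ideal whose minimal generators lie in a single degree $d$ forces $\reg(I_\lambda^t) = d = 2t$, and hence $\reg(S/I_\lambda^t) = 2t - 1$, completing the proof.

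The only substantive step is the application of the Herzog-Hibi-Zheng theorem; no additional combinatorial analysis of the Ferrers graph is required once the linearity of the resolution of $I_\lambda$ itself has been recorded in Proposition \ref{CN_Ilambda}(3). Thus the main obstacle is conceptual rather than computational: one must recognize that the degree-two hypothesis together with linearity of the first power is exactly the input needed for the Herzog-Hibi-Zheng machinery, after which everything else follows by a bookkeeping argument on the generating degree.
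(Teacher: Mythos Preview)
Your proof is correct and follows essentially the same approach as the paper: both reduce to the Herzog--Hibi--Zheng theorem \cite{HHZ} on powers of quadratic monomial ideals with linear resolution. The only cosmetic difference is in how the base case is verified --- the paper notes that Ferrers graphs are cochordal (so Fr\"oberg's theorem gives linearity of $I_\lambda$), whereas you invoke Proposition~\ref{CN_Ilambda}(3) directly, which is arguably cleaner since it uses a result already recorded in the paper.
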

\begin{proof} The Ferrers ideals is the edge ideal of a cochordal graph. The conclusion follows from \cite{HHZ}.  
\end{proof}

\subsection{Tableau ideals and their associated radicals} Let $Y$ be a filling of a Young diagram $\lambda$. Recall that the tableau ideal associated to $Y$ is 
$$I(Y) = ((x_iy_j)^{w(i,j)} \mid 1 \le i \le n, 1 \le j \le \lambda_i).$$
Let $\a \in \NN^n$ and $\b \in \NN^m$ be exponents, we denote by $x^\a y^\b$ the monomial 
$$x^\a y^\b = x_1^{a_1}\cdots x_n^{a_n} y_1^{b_1} \cdots y_m^{b_m} \in S.$$ 
We have the following properties of associated radicals of tableau ideals.

\begin{lem}\label{lem_associated_rad_3}
    Let $\lambda$ be a partition. There exists an associated radical $J$ of $I_\lambda$ such that $\depth S/I_\lambda = \depth S/J$ and $J$ corresponds to a partition whose all rows have the same lengths (i.e., a complete bipartite graph).
\end{lem}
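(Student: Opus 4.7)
The plan is to exhibit an explicit monomial $u \notin I_\lambda$ whose associated radical $J = \sqrt{I_\lambda : u}$ collapses, after stripping linear generators and free variables, to the Ferrers ideal of a rectangular partition, and whose depth matches $\depth S/I_\lambda = (n+m) - \max_j\{\lambda_j+j-1\}$ coming from Proposition \ref{CN_Ilambda}.

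Let $k$ be the \emph{largest} index attaining the maximum of $\lambda_j + j - 1$; with the convention $\lambda_{n+1}=0$, this choice forces $\lambda_{k+1} < \lambda_k$ (otherwise $k+1$ would also attain the maximum). I would take
$$u = \Bigl(\prod_{i=k+1}^{n} x_i\Bigr)\Bigl(\prod_{j=\lambda_k+1}^{m} y_j\Bigr).$$
For $i > k$ and $j > \lambda_k$, weak decrease of $\lambda$ gives $\lambda_i \le \lambda_{k+1} < \lambda_k < j$, hence $x_iy_j \notin I_\lambda$ and therefore $u \notin I_\lambda$.

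Applying Lemma \ref{lem_associated_radicals_edge_weight} with all weights equal to $1$ yields $J = I(G_\lambda \setminus U) + (v : v \in U)$, where a direct computation gives $U = \{x_i : i \le k,\ \lambda_i > \lambda_k\} \cup \{y_j : j \le \lambda_{k+1}\}$. The surviving vertices of $G_\lambda \setminus U$ split into an \emph{active} part $X_s := \{i \le k : \lambda_i = \lambda_k\}$ together with $Y_s := \{\lambda_{k+1}+1,\ldots,\lambda_k\}$, and a \emph{free} part $\{x_i : i > k\} \cup \{y_j : j > \lambda_k\}$. One checks that every pair in $X_s \times Y_s$ is an edge (since $j \le \lambda_k = \lambda_i$) and that no edge touches a free vertex; thus the edge-ideal part of $J$ is $I(K_{|X_s|,|Y_s|})$, the Ferrers ideal of a rectangular partition, with $|X_s|,|Y_s| \ge 1$ (the latter because $|Y_s| = \lambda_k - \lambda_{k+1} \ge 1$).

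Finally, Lemma \ref{lem_depth_variables_clearing} lets me strip off the linear generators indexed by $U$ and separate the $(n-k)+(m-\lambda_k)$ free variables, giving
$$\depth S/J = \depth \k[X_s \cup Y_s]/I(K_{|X_s|,|Y_s|}) + (n-k) + (m-\lambda_k) = 1 + (n-k) + (m-\lambda_k),$$
since a rectangular Ferrers ideal has depth $1$ by Proposition \ref{CN_Ilambda}. This equals $(n+m)-(\lambda_k+k-1) = \depth S/I_\lambda$, as required. The only delicate point is choosing $k$ to be the \emph{largest} maximizing index, which is precisely what guarantees $\lambda_{k+1} < \lambda_k$ and hence $|Y_s| \ge 1$, so that the active graph is a genuine complete bipartite graph rather than a degenerate empty one.
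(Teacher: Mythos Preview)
Your proof is correct and follows essentially the same approach as the paper's: both select an index maximizing $\lambda_i+i$, exhibit a monomial whose colon with $I_\lambda$ reduces (after stripping linear generators and free variables) to a complete bipartite Ferrers ideal, and then match the free-variable count against Proposition~\ref{CN_Ilambda}. The only cosmetic differences are that the paper uses the \emph{smallest} maximizing index $\sigma$ and the degree-two monomial $y_{\lambda_\alpha}x_{\sigma+1}$ (with $\alpha$ the largest index satisfying $\lambda_\alpha>\lambda_\sigma$), whereas you use the largest maximizer $k$ and the higher-degree product $\prod_{i>k}x_i\prod_{j>\lambda_k}y_j$; your choice has the advantage of handling the edge cases ($k=n$ or $\lambda$ rectangular) more uniformly.
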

\begin{proof} Let $\sigma$ be the smallest index such that $\lambda_\sigma + \sigma = \max (\lambda_i + i \mid i = 1, \ldots, n)$. Let $\alpha$ be the largest index such that $\lambda_\alpha > \lambda_\sigma$. Since $\lambda_\sigma +\sigma \ge \lambda_{\sigma +1} + \sigma + 1$, we deduce that $\lambda_{\sigma+1} < \lambda_{\sigma}$. Let $J = I_\lambda : (y_{\lambda_\alpha} x_{\sigma+1})$. Then 
\begin{align*}
    J & = I_\lambda + (x_1,\ldots, x_\alpha) + (y_1, \ldots, y_{\lambda_{\sigma+1}}) \\
    &= (x_{\alpha+1},\ldots, x_\sigma)(y_{\lambda_{\sigma+1}+1},\ldots,y_{\lambda_\sigma}) + (x_1,\ldots, x_\alpha) + (y_1, \ldots, y_{\lambda_{\sigma+1}}).
\end{align*}
In particular, $U = \{x_i \mid i \ge \sigma + 1\}$ and $V = \{y_j \mid j \ge \lambda_\sigma + 1\}$ are the sets of free variables of $J$. By Lemma \ref{lem_depth_variables_clearing}, $\depth S/J = |U| + |V| + 1$. The conclusion follows from Proposition \ref{CN_Ilambda}.    
\end{proof}

\begin{exam}  Given a Young diagram as follows: 
\begin{center}
$\begin{ytableau}
*(white)  & *(white)  & *(white)  & *(white)  & *(white)  & *(white)  & *(white) \\
*(white)  & *(white)  & *(white)  & *(white)  & *(white)   & *(white)  & *(yellow) \\
*(white)  & *(white)  & *(white)  & *(white)  & *(white)  & *(blue)   \\
*(white)  & *(white)  & *(white)  & *(white)  & *(white)  & *(blue) \\
*(white)  & *(white)  & *(white)  & *(white)  & *(yellow) \\
*(white)  & *(white) & *(white) 
\end{ytableau}$
 \end{center}
We take the colon with respect to the variables corresponding to the yellow boxes, namely, $y_7$ and $x_5$. Then we have $J = (x_3,x_4)y_6 + (x_1,x_2,y_1,\ldots, y_5)$. Then $x_5,x_6$ and $y_7$ will be free variables and $J$ corresponds to the Ferrers ideal with just two rows of length $1$, corresponding to the blue boxes.
\end{exam}

Thus, we have 
\begin{lem}\label{depth_complete_bipartite}
    Let $I(Y)$ be a tableau ideal. There exists an associated radical $J$ of $I(Y)$ such that $\depth S/I(Y) = \depth S/J$ and $J$ corresponds to a partition whose all rows have the same lengths.
\end{lem}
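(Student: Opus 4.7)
The plan is to reduce to the Ferrers case handled in Lemma \ref{lem_associated_rad_3}, using Hochster's theorem to select an associated radical realizing the depth and then lifting via Lemma \ref{lem_associated_rad_1}. By Theorem \ref{thm_Hochster_depth}, pick a monomial $x^\a y^\b \notin I(Y)$ so that $J := \sqrt{I(Y) : x^\a y^\b}$ achieves $\depth S/J = \depth S/I(Y)$. By Lemma \ref{lem_associated_radicals_edge_weight}, $J = I(G_\lambda \setminus U) + (v \mid v \in U)$ for some subset $U$ of the vertex set of the Ferrers graph $G_\lambda$. Because neighbourhoods in $G_\lambda$ are nested, the induced subgraph $G_\lambda \setminus U$ is, after relabeling the surviving rows and columns, itself a Ferrers graph $G_{\lambda'}$ of some subpartition $\lambda'$, possibly together with some isolated vertices that become free variables of $J$ in $S$.

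Let $R = \k[\supp(I_{\lambda'})]$. Applying Lemma \ref{lem_associated_rad_3} to $I_{\lambda'}$ inside $R$ produces a monomial $u \in R$ such that $K' := I_{\lambda'} : u$ satisfies $\depth R/K' = \depth R/I_{\lambda'}$ and $K'$ corresponds to a rectangular partition. Since $u$ involves none of the variables in $U$, in $S$ we have
$$K := J : u = (I_{\lambda'} : u) + (v \mid v \in U) = K' + (v \mid v \in U).$$
As $J$ is squarefree, $K = \sqrt{J : u}$ is an associated radical of $J$, and by Lemma \ref{lem_associated_rad_1} it is also an associated radical of $I(Y)$. Its edge-ideal part coincides with that of $K'$, the edge ideal of a complete bipartite graph, so $K$ corresponds to a partition whose rows all have the same length.

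It remains to verify $\depth S/K = \depth S/I(Y)$. Applying Lemma \ref{lem_depth_variables_clearing} to the presentations $J = I_{\lambda'} + (v \mid v \in U)$ and $K = K' + (v \mid v \in U)$ in $S$ gives $\depth S/J = \depth R/I_{\lambda'} + N$ and $\depth S/K = \depth R/K' + N$, where $N$ counts the isolated vertices of $G_\lambda \setminus U$; the extra free variables in $\supp(I_{\lambda'}) \setminus \supp(K')$ of $K$ are absorbed via one more application of Lemma \ref{lem_depth_variables_clearing} to $K' \subseteq R$. Combining with $\depth R/K' = \depth R/I_{\lambda'}$ yields $\depth S/K = \depth S/J = \depth S/I(Y)$. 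The main technical care lies precisely in this bookkeeping of free variables across the three ambient rings $\k[\supp(K')] \subseteq R \subseteq S$, so that the depth formulae for $J$ and $K$ in $S$ line up exactly.
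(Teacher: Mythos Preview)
Your proof is correct and follows exactly the approach the paper intends: the paper's own proof is the one-liner ``Follows from Theorem \ref{thm_Hochster_depth}, Lemma \ref{lem_associated_rad_1} and Lemma \ref{lem_associated_rad_3},'' and you have simply unpacked this by choosing a depth-realizing associated radical $J$ via Hochster, identifying its Ferrers part $I_{\lambda'}$, applying Lemma \ref{lem_associated_rad_3} there to obtain the rectangular $K'$, and then lifting $K = J:u$ back to an associated radical of $I(Y)$ via Lemma \ref{lem_associated_rad_1}. Your bookkeeping of free variables via Lemma \ref{lem_depth_variables_clearing} to verify $\depth S/K = \depth S/J$ is accurate and is precisely the detail the paper leaves implicit.
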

\begin{proof}
    Follows from Theorem \ref{thm_Hochster_depth}, Lemma \ref{lem_associated_rad_1} and Lemma \ref{lem_associated_rad_3}.
\end{proof}

\section{Depth of tableau ideals}\label{sec_depth_tableaux_ideal}

In this section, we compute the depth of tableau ideals. We fix the following notation throughout the section:
\begin{enumerate}
    \item $\lambda = (\lambda_1,\ldots, \lambda_n)$ is a partition with $\lambda_1 = m$.
    \item $\alpha = \alpha(\lambda)$ is the smallest index $i$ such that $\lambda_i + i = \max (\lambda_j + j \mid j = 1, \ldots, n).$
    \item $Y$ is a filling of $\lambda$ with the value $w_{ij}$ on the $ij$-th box of $\lambda$. 
    \item $\omega = \min (w(i,j)  \mid 1 \le i \le n, 1 \le j \le \lambda_i)$ is the minimum weight.
\end{enumerate}

\begin{defn}
    A box $(\gamma,\delta)$ is called a {\it minimal box} of $Y$ if $w(\gamma,\delta) = \omega$ and $w(i,j) > \omega$ for all $(i,j) \neq (\gamma,\delta)$ such that $i \le \gamma$ and $j \le \delta$.
\end{defn}
We then fix $(\gamma,\delta)$ a minimal box of $Y$. We first see the change in the depth of Ferrers ideals after adding a variable.

\begin{lem}\label{lem_depth_adding_a_variable} Let $J = I_\lambda + (x_a)$. Then 
$$\depth S/J = \begin{cases}
    \depth S/I_\lambda - 1 & \text{ if } a \ge \alpha + 1,\\
    \depth S/I_\lambda & \text{ if } a \le \alpha - 1,\\
    \depth S/I_\lambda + \epsilon & \text{ if } a = \alpha,
\end{cases}$$
where $\epsilon$ is a non-negative integer. 
\end{lem}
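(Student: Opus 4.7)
The plan is to identify $S/J$ with the quotient by a smaller Ferrers ideal and then invoke Corso--Nagel together with Auslander--Buchsbaum. Since $x_a$ enters $I_\lambda$ only through $x_ay_1,\ldots,x_ay_{\lambda_a}$, passing to the quotient by $x_a$ yields $S/J\cong T/I_{\lambda'}$, where $T:=\k[x_1,\ldots,\widehat{x_a},\ldots,x_n,y_1,\ldots,y_m]$ and $\lambda'$ is obtained from $\lambda$ by deleting its $a$-th part. Because $x_a$ annihilates $S/J$, we have $\depth_S(S/J)=\depth_T(T/I_{\lambda'})$.

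Set $M(\mu):=\max_j(\mu_j+j-1)$. Proposition \ref{CN_Ilambda} and Auslander--Buchsbaum give $\depth(S/I_\lambda)=(n+m)-M(\lambda)$. Applying the same formula to $\lambda'$ inside $T$, and using Lemma \ref{lem_depth_variables_clearing} to absorb the $y_j$ that become free in the exceptional subcase $a=1$ with $\lambda_2<m$, one obtains $\depth_T(T/I_{\lambda'})=(n-1+m)-M(\lambda')$. Therefore
$$\depth(S/J)-\depth(S/I_\lambda)=-1+\bigl(M(\lambda)-M(\lambda')\bigr),$$
so the lemma reduces to a purely combinatorial comparison of $M(\lambda)$ with $M(\lambda')$.

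Using $\lambda'_j=\lambda_j$ for $j<a$ and $\lambda'_j=\lambda_{j+1}$ for $j\ge a$, we rewrite
$$M(\lambda')=\max\Bigl(\max_{j<a}(\lambda_j+j-1),\ \max_{k>a}(\lambda_k+k-2)\Bigr).$$
If $a\ge\alpha+1$, then $\alpha$ lies in the first range, so the first maximum equals $M(\lambda)$ while the second is at most $M(\lambda)-1$; hence $M(\lambda')=M(\lambda)$ and $\depth(S/J)=\depth(S/I_\lambda)-1$. If $a\le\alpha-1$, minimality of $\alpha$ forces every term of the first maximum to be strictly less than $M(\lambda)$, while the index $k=\alpha$ contributes $M(\lambda)-1$ to the second; hence $M(\lambda')=M(\lambda)-1$ and $\depth(S/J)=\depth(S/I_\lambda)$. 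If $a=\alpha$, both maxima are bounded above by $M(\lambda)-1$, so $M(\lambda')\le M(\lambda)-1$ and $\depth(S/J)\ge\depth(S/I_\lambda)$, yielding $\epsilon\ge 0$.

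The only point requiring care beyond this index juggling is the possible shortening of the first row when $a=1$, which is handled cleanly by the free-variable reduction in Lemma \ref{lem_depth_variables_clearing}; no deeper obstacle arises.
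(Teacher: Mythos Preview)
Your proof is correct and follows essentially the same route as the paper: both reduce $S/J$ to the quotient by the smaller Ferrers ideal $I_{\lambda'}$ (the paper calls it $I_\mu$), invoke Proposition~\ref{CN_Ilambda} together with Auslander--Buchsbaum, and then carry out the identical three-case comparison of $\max_j(\lambda'_j+j-1)$ against $M(\lambda)$. The only cosmetic difference is that the paper tracks projective dimension (which is insensitive to free variables, so the $a=1$ subcase needs no special mention), whereas you track depth directly and therefore explicitly invoke Lemma~\ref{lem_depth_variables_clearing} to handle the free $y_j$'s; the two formulations are equivalent via Auslander--Buchsbaum.
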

\begin{proof}
Denote by $\mu$ the partition obtained by deleting the $a$th row of $\lambda$, i.e., $\mu=(\lambda_1,\ldots, \lambda_{a-1}, \lambda_{a+1},\ldots, \lambda_n)$. Then $J$ is isomorphic to $I_\mu + (x_a)$. Let $R$ be the polynomial subring on $\{x_1,\ldots, x_{a-1},x_{a+1},\ldots,x_n,y_1,\ldots,y_m\}$. By Proposition \ref{CN_Ilambda}, 
\begin{align*}
    \pd (R/I_{\mu}) & = \max_{i} \{ \mu_i + i - 1\} \\
    &= \max ( \max \{\lambda_i + i - 1 \mid i \le a-1\}, \max \{\lambda_i + i-2 \mid i \ge a+1\} ).
\end{align*}

We next distinguish some cases:

\vskip0.5em \noindent {\bf Case 1.} $a \ge \alpha + 1$. In this case, we have $\pd (R/I_\mu) = \pd (S/I_\lambda)$. By the Auslander-Buchsbaum formula, we deduce that $\depth S/J = \depth S/I - 1$. 

\vskip0.5em \noindent {\bf  Case 2.} $a \le \alpha - 1$. In this case, we have $\pd (R/I_\mu) = \pd (S/I_\lambda) - 1$. In particular, we have $\depth S/J = \depth S/I$. 

\vskip0.5em \noindent {\bf  Case 3.} $a = \alpha$. Since $\alpha$ is the smallest index such that $\lambda_i + i = \max (\lambda_j + j)$, we have $\lambda_i + i < \lambda_\alpha + \alpha$ for all $i < \alpha$. In particular, we deduce that $\pd(R/I_\mu) \le \pd (S/I_\lambda) - 1$. Hence, $\depth S/J = \depth S/I + \epsilon$ for a non-negative integer $\epsilon$.
\end{proof}

We now have the following two key lemmas to compute the depth of tableau ideals. 
\begin{lem}\label{lem_depth_reduction_w_1} Assume that $Y$ is a filling of a Young diagram $\lambda$. Let $(\gamma,\delta)$ be a minimal box of $Y$. Let $Y'$ be a filling of $\lambda$ obtained by replacing the value $\omega$ on the $(\gamma,\delta)$ box by $1$. Then
    $$\depth S/I(Y) \ge \depth S/I(Y').$$
\end{lem}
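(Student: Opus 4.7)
The plan is to invoke Hochster's theorem (Theorem \ref{thm_Hochster_depth}) and show that every associated radical of $I(Y)$ is an associated radical of $I(Y')$. More precisely, for each monomial $f = x^\a y^\b \notin I(Y)$, I will construct $f' = x^{\a'} y^{\b'} \notin I(Y')$ with $\sqrt{I(Y'):f'} = \sqrt{I(Y):f}$; minimizing over $f$ then yields $\depth S/I(Y') \le \depth S/I(Y)$.

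For the construction, since the generator $(x_\gamma y_\delta)^\omega$ of $I(Y)$ does not divide $f$, we must have $a_\gamma < \omega$ or $b_\delta < \omega$. I set $a'_\gamma = 0$ when $a_\gamma < \omega$ and $a'_\gamma = a_\gamma$ otherwise, with the symmetric definition for $b'_\delta$, and leave all other coordinates unchanged. Then at least one of $a'_\gamma, b'_\delta$ is zero, so the new generator $x_\gamma y_\delta$ of $I(Y')$ does not divide $f'$; since $f' \mid f$ and all the other generators of $I(Y')$ are shared with $I(Y)$, none of them divides $f'$ either, so $f' \notin I(Y')$.

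The technical heart is verifying $\sqrt{I(Y'):f'} = \sqrt{I(Y):f}$. By Lemma \ref{lem_associated_radicals_edge_weight} both sides have the form $I(G_\lambda \setminus U) + (x_i : i \in U)$, so it is enough to check that the forcing sets $U$ and $U'$ agree vertex by vertex. Since weights are unchanged outside of $(\gamma, \delta)$, the only potentially affected memberships are those of $x_\gamma, y_\delta$, and of the $y_j$ (resp.\ $x_i$) for which the edge $(\gamma, j)$ (resp.\ $(i, \delta)$) could contribute through the modified coordinate. The main obstacle, and the key observation, is that because every weight satisfies $w(i,j) \ge \omega$, the inequality $a_\gamma < \omega$ automatically gives $a_\gamma < w(\gamma, j)$ for all $j$, so replacing $a_\gamma$ by $0$ cannot change the membership in $U$ of any $y_j$ with $j \neq \delta$ via the edge $(\gamma, j)$; a symmetric statement applies to $b_\delta$. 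The contributions of the edge $(\gamma, \delta)$ itself to $U$ and $U'$ are checked directly in the subcases $\{a_\gamma < \omega,\ b_\delta < \omega\}$, $\{a_\gamma < \omega,\ b_\delta \ge \omega\}$, and its mirror, using that the change of weight on $(\gamma, \delta)$ from $\omega$ to $1$ is precisely compensated by the change of exponents from $a_\gamma, b_\delta$ to $0$. Finally, the assumption $f \notin I(Y)$ eliminates the residual configurations where both endpoints of some edge would simultaneously meet the edge weight. Once $U = U'$ is established, Hochster's theorem applied to $I(Y')$ completes the argument.
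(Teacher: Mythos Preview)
Your proof is correct and follows essentially the same approach as the paper's: both start from Hochster's theorem, take a monomial $f = x^\a y^\b \notin I(Y)$, zero out $a_\gamma$ (respectively $b_\delta$) whenever it is below $\omega$, and invoke Lemma \ref{lem_associated_radicals_edge_weight} to conclude that the associated radical is unchanged. The paper is slightly terser (it only treats the single $f$ realizing the minimum depth and leaves the verification $U = U'$ implicit), whereas you carry out the vertex-by-vertex check explicitly, but the argument is the same.
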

\begin{proof}
    By Theorem \ref{thm_Hochster_depth}, there exists an exponent $x^\a y^\b$ such that $\depth S/I(Y) = \depth S/\sqrt{I(Y):x^\a y^\b}$. Since $x^\a y^\b \notin I(Y)$, we must have either $a_\gamma$ or $b_\delta < \omega$. We may assume that $a_\gamma < \omega$. Let $\a'$ be an exponent such that $$a'_i = \begin{cases} a_i & \text{ if } i \neq \gamma\\
    0 & \text{ if } i  = \gamma. \end{cases}$$ 
    There are two cases: 
   
   \vskip0.5em \noindent {\bf Case 1.} $b_\delta < \omega$. Let $\b'$ be an exponent such that 
   $$b'_j = \begin{cases} b_j & \text{ if } j \neq \delta\\
    0 & \text{ if } j  = \delta. \end{cases}$$
    By Lemma \ref{lem_associated_radicals_edge_weight}, 
    $$\sqrt{I(Y'):x^{\a'} y^{\b'}} = \sqrt{I(Y):x^\a y^\b}.$$

    \vskip0.5em \noindent {\bf Case 2.} $b_\delta \ge \omega$. By Lemma \ref{lem_associated_radicals_edge_weight},
    $$\sqrt{I(Y'):x^{\a'} y^\b} = \sqrt{I(Y):x^\a y^\b}.$$
    In either cases, from Theorem \ref{thm_Hochster_depth} we deduce that $\depth S/I(Y) \ge \depth S/I(Y')$.    
\end{proof}

\begin{lem}\label{lem_depth_upperbound_add_variable}
    Let $(\gamma,\delta)$ be a minimal box of $Y$. Then 
    $$\depth S/I(Y) \le \depth S/(I(Y) + (x_\gamma)).$$
\end{lem}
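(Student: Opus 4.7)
The plan is to apply Hochster's theorem (Theorem~\ref{thm_Hochster_depth}): it suffices to exhibit an associated radical $J$ of $I(Y)$ with $\depth S/J \le \depth S/(I(Y) + (x_\gamma))$. The idea is to first reduce $I(Y) + (x_\gamma)$ to a rectangular-Ferrers associated radical in the subring $R = \k[x_i, y_j : i \ne \gamma]$, then lift this back to an associated radical of $I(Y)$ using the minimality of the box $(\gamma, \delta)$.

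First let $Y''$ denote the filling obtained from $Y$ by deleting row $\gamma$. Since $I(Y) + (x_\gamma) = I(Y'') + (x_\gamma)$, Lemma~\ref{lem_depth_variables_clearing} yields
\[
\depth S/(I(Y) + (x_\gamma)) = \depth R/I(Y'').
\]
Applying Lemma~\ref{depth_complete_bipartite} to $I(Y'')$ inside $R$ produces exponents $\a''$, $\b''$ and an associated radical $K'' := \sqrt{I(Y''):x^{\a''} y^{\b''}}$ of the form $I_{\mathrm{rect}} + L$ (a rectangular Ferrers edge ideal plus a linear ideal) satisfying $\depth R/K'' = \depth R/I(Y'')$.

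Next I extend $\a''$ to $\a \in \NN^n$ by $a_\gamma = 0$ and set $\b = \b''$; let $J := \sqrt{I(Y):x^\a y^\b}$, which is an associated radical of $I(Y)$ because $a_\gamma = 0$ makes $x^\a y^\b \notin I(Y)$. Using Lemma~\ref{lem_associated_radicals_edge_weight}, the sets $U_x, U_y$ for $I(Y)$ agree with those for $I(Y'')$ on indices $i \ne \gamma$: in particular, $a_\gamma = 0 < w(\gamma, j)$ prevents any new contribution to $U_y$ from row $\gamma$. In the favorable case where some $j \le \lambda_\gamma$ satisfies $b''_j \ge w(\gamma, j)$, one has $\gamma \in U_x$ and the computation collapses to $J = K'' + (x_\gamma)$; Lemma~\ref{lem_depth_variables_clearing} then gives $\depth S/J = \depth R/K'' = \depth S/(I(Y) + (x_\gamma))$, and Hochster's theorem yields the inequality.

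The main obstacle is the complementary case, in which $b''_j < w(\gamma, j)$ for every $j \le \lambda_\gamma$, so that $J = K'' + x_\gamma(y_1, \ldots, y_{\lambda_\gamma})$. Here the minimality of $(\gamma, \delta)$ is essential: since $w(i, \delta) > \omega$ for $i < \gamma$ and $w(\gamma, j) > \omega$ for $j < \delta$, the plan is to modify $\b''$ by raising the $\delta$-th coordinate to $\omega$, thereby forcing $\gamma \in U_x$ and returning to the favorable case. By minimality, this modification can only bring rows $i > \gamma$ with $w(i, \delta) = \omega$ into $U''_x$, and cannot affect any row $i < \gamma$. Using the freedom in choosing $\a''$, $\b''$ afforded by Lemmas~\ref{lem_associated_rad_1} and~\ref{lem_associated_rad_3} that underlie Lemma~\ref{depth_complete_bipartite}, these perturbations can be absorbed into the rectangular and linear parts of $K''$ without increasing $\depth R/K''$. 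Verifying this absorption — a careful bookkeeping of how the modified $\b$ interacts with the rectangular structure of $K''$ and the linear ideal $L$ while keeping the depth fixed — is the technical heart of the argument.
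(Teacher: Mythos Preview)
Your overall strategy matches the paper's: use Hochster's theorem by exhibiting an associated radical of $I(Y)$ whose depth is at most $\depth S/(I(Y)+(x_\gamma))$, and start from a rectangular associated radical $K''$ of the row-deleted tableau. Your favorable case (some $b''_j \ge w(\gamma,j)$, hence $J = K'' + (x_\gamma)$) is correct.

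The gap is the complementary case. You propose to raise $b''_\delta$ to $\omega$ and claim the resulting perturbations ``can be absorbed'' into $K''$, but this is where the real work lies and your outline does not do it. Two concrete obstructions:
\begin{enumerate}
\item The modified monomial may fall into $I(Y)$. If some $i>\gamma$ has $w(i,\delta)=\omega$ and $a''_i\ge\omega$, then $(x_iy_\delta)^\omega$ divides $x^{\a}y^{\b}$ once $b_\delta=\omega$, so $x^{\a}y^{\b}\in I(Y)$ and you no longer have an associated radical at all. Minimality of $(\gamma,\delta)$ says nothing about rows $i>\gamma$, so this case genuinely occurs.
\item Even when the monomial stays out of $I(Y)$, the new radical is $K''+(x_\gamma)+(x_i\mid i\in W)$ for a set $W$ of rows below $\gamma$, and removing $x_i$ from the rectangular block $U_1$ can \emph{increase} the depth (for instance when $|U_1|=1$ and $|V_1|\ge 2$, the bipartite part collapses and the $y$-variables in $V_1$ become free). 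The vague appeal to ``freedom in choosing $\a'',\b''$'' does not explain how to avoid this.
\end{enumerate}

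The paper circumvents these issues by a different mechanism: induction on the size of the tableau. Whenever the rectangular radical $J$ of $I(Y)+(x_\gamma)$ has nontrivial linear part $U_2$ (resp.\ $V_2$), one passes to the smaller tableau $Z$ with those rows (resp.\ columns) killed, uses Lemma~\ref{lem_associated_rad_2} to compare depths, and applies the lemma inductively to $Z$. After these reductions one is left with $\a=\b=0$, $J=I_\lambda+(x_\gamma)$, and (after a further reduction) $\gamma=n$; only then is minimality invoked, via the clean identity $\sqrt{I(Y):y_\delta^{\omega}} = I_\lambda+(x_\gamma)$. Your direct-construction approach would need a comparably careful case analysis to close the gap, and as written it does not supply one.
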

\begin{proof} By Lemma \ref{depth_complete_bipartite}, there exists an exponent $x^\a y^\b$ such that
$$J = \sqrt{I(Y) + (x_\gamma) : x^\a y^\b} = (x_j \mid j \in U_1) (y_k \mid k \in V_1) + (x_\gamma, x_j , y_k \mid j \in U_2, k \in V_2)$$
for some subsets $U_1, U_2 \subseteq [n]$ and $V_1,V_2 \subseteq [m]$ and $\depth S/( I(Y) + (x_\gamma)) = \depth S/J.$ Since $x^\a y^\b \notin (I(Y) + (x_\gamma))$, $a_\gamma = 0$. For ease of reading, we divide the proof into several steps.

\vskip0.5em \noindent {\bf Step 1.} Reduction to the case $x_\gamma \notin \sqrt{I(Y):x^\a y^\b}$. Assume that $x_\gamma \in \sqrt{I(Y):x^\a y^\b}$. By Lemma \ref{lem_associated_radicals_edge_weight},
$\sqrt{I(Y) : x^\a y^\b} = J$. By Theorem \ref{thm_Hochster_depth}, $\depth S/I(Y) \le \depth S/J = \depth S/(I(Y) + (x_\gamma))$. Thus, we may assume that $x_\gamma \notin \sqrt{I(Y) :x^\a y^\b}$. 

\vskip0.5em \noindent {\bf Step 2.} Reduction to the case $b_j = 0$ for all $j = 1, \ldots, m$. If $U_2 = \emptyset$ then we may replace $\b$ by $\bf{0}$ without changing the radical ideal and obtain the reduction step. Thus, we assume that $b_j \neq 0$ for some $j$ and $U_2$ is non-empty. By Step 1, $\gamma \notin U_2$. Let $I(Z) = I(Y) + (x_i \mid i \in U_2)$. By Lemma \ref{lem_associated_radicals_edge_weight}, $J = \sqrt{(I(Z) + (x_\gamma)):x^\a}$. By Theorem \ref{thm_Hochster_depth}, $\depth S/(I(Z) + (x_\gamma)) \le \depth S/J = \depth S/(I(Y) + (x_\gamma))$. By Lemma \ref{lem_associated_rad_2}, $\depth S/(I(Z) + (x_\gamma)) \ge \depth S/ (I(Y) + (x_\gamma))$. By induction, we deduce that 
$$\depth S/I(Z) \le \depth S/I(Z) + (x_\gamma)) = \depth S/(I(Y) + (x_\gamma)).$$
By Lemma \ref{lem_associated_rad_2}, $\depth S/I(Y) \le \depth S/I(Z)$. Thus, we may assume that $b_j = 0$ for all $j$. In particular, we have 
\begin{equation}\label{eq_depth_2}
    J = \sqrt{(I(Y) + (x_\gamma)):x^\a} =  (x_j \mid j \in U_1)(y_k \mid k \in V_1) + (x_\gamma,y_k \mid k \in V_2).
\end{equation}

\vskip0.5em \noindent {\bf Step 3.} Reduction to the case $a_i = 0$ for all $i = 1, \ldots, n$. Similar to Step 2, assume that $a_i \neq 0$ for some $i$ and $V_2$ is non-empty. Let $I(Z) = I(Y) + (y_k \mid k \in V_2)$. By definition, we have $J = \sqrt{I(Z) + (x_\gamma)}$. By Theorem \ref{thm_Hochster_depth} and Lemma \ref{lem_associated_rad_2}, $\depth S/(I(Z) + (x_\gamma)) = \depth S/(I(Y) + (x_\gamma)$. By induction and Lemma \ref{lem_associated_rad_2}, we deduce that 
$$\depth S/I(Y) \le \depth S/I(Z) \le \depth S/(I(Z) + (x_\gamma)) = \depth S/(I(Y) + (x_\gamma)).$$
Thus, we may assume that $a_i = 0$ for all $i = 1, \ldots, n$. In particular, we have 
\begin{equation}\label{eq_depth_3}
J = \sqrt{I(Y) + (x_\gamma)} = I_\lambda + (x_\gamma) = (x_j \mid j \in U_1) (y_k \mid k \in V_1) + (x_\gamma).
\end{equation}

\vskip0.5em \noindent {\bf Step 4.} Reduction to the case $x_\gamma$ is the last variable, i.e., $\gamma = n$. By Eq. \eqref{eq_depth_3}, we have $U_1 \cup \{\gamma\} = [n]$. Let $V_2 = [m] \setminus V_1$. By Lemma \ref{lem_depth_variables_clearing}, $\depth S/J = |V_2| + 1$. Assume that $\gamma < n$. If $\gamma > 1$ then from Eq. \eqref{eq_depth_3}, we deduce that $\lambda_1 = \cdots = \lambda_n$ and $V_2 = \emptyset$. In this case, we have $\depth S/I(Y) \le \depth S/I_\lambda = 1 = \depth S/J$. Now, assume that $\gamma = 1$. By Lemma \ref{lem_depth_adding_a_variable}, 
$$\depth S/I_\lambda \le \depth S/(I_\lambda + (x_1)) = \depth S/J.$$
The conclusion follows from Theorem \ref{thm_Hochster_depth}. Thus, we may assume that $x_\gamma$ is the last variable. 

\vskip0.5em \noindent {\bf Step 5.} Conclusion step. By Step 3 and Step 4, $\gamma = n$ and $\lambda_1 = \ldots = \lambda_{n-1}$. Since $(\gamma,\delta)$ is a minimal box, $w(j,\delta) > \omega$ for all $j < \gamma$. By Lemma \ref{lem_associated_radicals_edge_weight}, 
$$\sqrt{I(Y) : y_\delta^\omega} = I_\lambda + (x_\gamma) = J.$$
The conclusion follows from Theorem \ref{thm_Hochster_depth}.
\end{proof}

We are now ready for the main result of the section.

\begin{thm}\label{thm_depth_tableau}
    Let $Y$ be an arbitrary filling of a Young diagram. Let $(\gamma,\delta)$ be a minimal box of $Y$. Then 
    $$\depth S/I(Y) = \min \{ \depth S/(I(Y) + (x_\gamma)), \depth S/(I(Y) + (y_\delta)) \}.$$
\end{thm}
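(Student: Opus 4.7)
The plan is to prove both inequalities separately. For the upper bound, Lemma \ref{lem_depth_upperbound_add_variable} already gives $\depth S/I(Y) \le \depth S/(I(Y) + (x_\gamma))$, and transposing the Young diagram (which swaps the roles of the $x_i$'s and $y_j$'s and sends the minimal box $(\gamma,\delta)$ to $(\delta,\gamma)$) yields the symmetric inequality with $y_\delta$ in place of $x_\gamma$.

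For the lower bound, I first reduce to the subcase $w(\gamma,\delta) = 1$. Let $Y'$ be the filling obtained from $Y$ by replacing $\omega$ at $(\gamma,\delta)$ with $1$. Then $(\gamma,\delta)$ remains a minimal box of $Y'$, and since altering the weight at a single box leaves the ideal unchanged modulo either of the variables appearing in that box, $I(Y') + (x_\gamma) = I(Y) + (x_\gamma)$ and $I(Y') + (y_\delta) = I(Y) + (y_\delta)$. By Lemma \ref{lem_depth_reduction_w_1}, $\depth S/I(Y) \ge \depth S/I(Y')$, so it suffices to prove the theorem when $w(\gamma,\delta) = 1$.

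Under this assumption, I apply Lemma \ref{lem_depth_reg_colon} with $x = x_\gamma$. If $\depth S/I(Y) = \depth S/(I(Y) + (x_\gamma))$, the lower bound is immediate. Otherwise $\depth S/I(Y) = \depth S/(I(Y):x_\gamma)$, and the crucial observation is that $x_\gamma y_\delta \in I(Y)$ forces $y_\delta \in I(Y):x_\gamma$, so that $(I(Y) + (y_\delta)):x_\gamma = I(Y):x_\gamma$. The short exact sequence
$$0 \to S/(I(Y):x_\gamma)(-1) \xrightarrow{x_\gamma} S/(I(Y) + (y_\delta)) \to S/(I(Y) + (x_\gamma) + (y_\delta)) \to 0$$
combined with the depth lemma gives $\depth S/(I(Y):x_\gamma) \ge \min\{\depth S/(I(Y) + (y_\delta)), \depth S/(I(Y) + (x_\gamma) + (y_\delta)) + 1\}$, and Lemma \ref{lem_depth_reg_adding_variable}(1) bounds the second term below by $\depth S/(I(Y) + (x_\gamma))$. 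The main obstacle is precisely this last step: isolating the right short exact sequence to bring $y_\delta$ into play is what makes the reduction to $w(\gamma,\delta)=1$ worthwhile, since only under that reduction is $y_\delta$ automatically a member of the colon ideal.
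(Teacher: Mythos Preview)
Your proof is correct, and the overall architecture matches the paper's: upper bound via Lemma~\ref{lem_depth_upperbound_add_variable} and its transpose, reduction to $w(\gamma,\delta)=1$ via Lemma~\ref{lem_depth_reduction_w_1}, and then Lemma~\ref{lem_depth_reg_colon} to reduce the lower bound to a statement about $I(Y):x_\gamma$. The divergence is in that last step. The paper shows directly that $\depth S/(I(Y):x_\gamma) \ge \depth S/(I(Y)+(y_\delta))$ by taking an associated radical of $I(Y):x_\gamma$ realizing its depth, observing $b_\delta=0$, and using Lemma~\ref{lem_associated_radicals_edge_weight} to recognize the same radical as an associated radical of $I(Y)+(y_\delta)$; Hochster's theorem then finishes. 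You instead exploit the identity $(I(Y)+(y_\delta)):x_\gamma = I(Y):x_\gamma$ to write down the short exact sequence for multiplication by $x_\gamma$ on $S/(I(Y)+(y_\delta))$, and combine the depth lemma with Lemma~\ref{lem_depth_reg_adding_variable}(1) to land on the $\min$. Your argument is more self-contained (no appeal to associated radicals at this stage) at the cost of yielding only the $\min$-inequality rather than the paper's sharper $\depth S/(I(Y):x_\gamma) \ge \depth S/(I(Y)+(y_\delta))$; but the $\min$ is exactly what the theorem requires, so nothing is lost.
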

\begin{proof}
    By Lemma \ref{lem_depth_reduction_w_1} and Lemma \ref{lem_depth_upperbound_add_variable}, we may assume that $\omega = 1$ and we need to prove that 
    $$\depth S/I(Y) \ge \min \{ \depth S/(I(Y)+(x_\gamma)), \depth S/(I(Y)+(y_\delta)) \}.$$
    By Lemma \ref{lem_depth_reg_colon}, it suffices to prove that $\depth S/I(Y) : x_\gamma \ge \depth S/(I(Y) + (y_\delta))$. By Theorem \ref{thm_Hochster_depth}, there exists an exponent $x^\a y^\b$ such that $\depth S/I(Y) : x_\gamma = \depth S/\sqrt{I(Y):x_\gamma x^\a y^\b}$. Since $x^\a y^\b \notin I(Y) :x_\gamma$, $b_\delta = 0$. In other words, $x_\gamma x^\a y^\b \notin I(Y) + (y_\delta)$. By Lemma \ref{lem_associated_radicals_edge_weight}, 
    $$\sqrt{(I(Y) + (y_\delta)):x_\gamma x^\a y^\b} = \sqrt{I(Y) : x_\gamma x^\a y^\b}.$$ The conclusion follows from Theorem \ref{thm_Hochster_depth}.
\end{proof}

 As a corollary, we have 
 \begin{cor}\label{cor_depth_weakly_increasing} Assume that $Y$ is a weakly increasing filling of a Young diagram $\lambda$. Then $$\depth S/I(Y) = \depth S/I_\lambda = m+n - \max_i \{ \lambda_i + i -1\}.$$ 
 \end{cor}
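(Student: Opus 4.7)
The idea is to apply Theorem \ref{thm_depth_tableau} recursively at the minimal box $(\gamma,\delta) = (1,1)$, show that the resulting recursion runs in lockstep with the analogous recursion for the Ferrers ideal $I_\lambda$, and read off the closed form from Proposition \ref{CN_Ilambda}.

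First I would check that $(1,1)$ is a minimal box of $Y$. Weak monotonicity across rows and down columns forces $w(1,1) \le w(i,j)$ for every box, so $w(1,1) = \omega$; and the condition $w(i,j) > \omega$ for all $(i,j) \neq (1,1)$ with $i \le 1$ and $j \le 1$ is vacuous, since that set is empty. The identical observation shows that $(1,1)$ is the (unique) minimal box of $I_\lambda$ regarded as a tableau ideal with every weight equal to $1$. Applying Theorem \ref{thm_depth_tableau} to both ideals therefore gives the two parallel recursions
\begin{align*}
\depth S/I(Y) &= \min\{\depth S/(I(Y) + (x_1)),\; \depth S/(I(Y) + (y_1))\},\\
\depth S/I_\lambda &= \min\{\depth S/(I_\lambda + (x_1)),\; \depth S/(I_\lambda + (y_1))\}.
\end{align*}

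Next I would induct on $|\lambda|$, the total number of boxes, to show $\depth S/I(Y) = \depth S/I_\lambda$. The base case $|\lambda|=1$ is immediate, since both $I(Y)$ and $I_\lambda$ are principal ideals generated by a nonzero-divisor of $\k[x_1,y_1]$, giving depth $1$. For the inductive step, the standard quotient-by-a-variable isomorphisms
\begin{align*}
S/(I(Y) + (x_1)) &\cong S_r/I(Y_r), & S_r &= \k[x_2,\ldots,x_n,y_1,\ldots,y_m],\\
S/(I(Y) + (y_1)) &\cong S_c/I(Y_c), & S_c &= \k[x_1,\ldots,x_n,y_2,\ldots,y_m],
\end{align*}
exhibit $Y_r$ and $Y_c$ as weakly increasing fillings of the strictly smaller shapes $\lambda_r = (\lambda_2,\ldots,\lambda_n)$ and $\lambda_c$ (obtained from $(\lambda_1 - 1,\ldots,\lambda_n - 1)$ by discarding zero parts). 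By the induction hypothesis $\depth S_r/I(Y_r) = \depth S_r/I_{\lambda_r}$ and $\depth S_c/I(Y_c) = \depth S_c/I_{\lambda_c}$, and the very same two isomorphisms applied to the (all-weight-$1$) filling $I_\lambda$ identify these with $\depth S/(I_\lambda + (x_1))$ and $\depth S/(I_\lambda + (y_1))$. The two displayed recursions therefore reduce to identical minima, and hence $\depth S/I(Y) = \depth S/I_\lambda$.

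Finally, the closed-form expression follows from the Auslander-Buchsbaum formula and Proposition \ref{CN_Ilambda}(2):
$$\depth S/I_\lambda = (m+n) - \pd(S/I_\lambda) = m + n - \max_i\{\lambda_i + i - 1\}.$$
There is no substantive obstacle; the only things to be careful about are that row and column deletion preserve weak monotonicity of the filling and that the variables lost to the quotients are tracked correctly so that the two recursions are compared in compatible ambient rings.
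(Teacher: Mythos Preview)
Your proposal is correct and follows essentially the same argument as the paper: observe that $(1,1)$ is a minimal box of any weakly increasing filling (including the all-$1$ filling giving $I_\lambda$), apply Theorem \ref{thm_depth_tableau} to both $I(Y)$ and $I_\lambda$ to obtain parallel recursions, and conclude by induction together with Proposition \ref{CN_Ilambda}. Your write-up simply spells out in more detail the inductive step that the paper leaves implicit, including the careful (and necessary) remark about tracking free variables when rows of length $1$ disappear after deleting the first column.
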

\begin{proof} Since $Y$ is weakly increasing, $(1,1)$ is a minimal box of $Y$. By Theorem \ref{thm_depth_tableau}, we have 
$$\depth S/I(Y) = \min \{ \depth (S/I(Y) + (x_1)), \depth S/(I(Y) + (y_1)) \}.$$
By Theorem \ref{thm_depth_tableau}, we also have 
$$\depth S/I_\lambda = \min \{ \depth (S/I_\lambda + (x_1)), \depth S/(I_\lambda + (y_1)) \}.$$
The conclusion follows from induction and Proposition \ref{CN_Ilambda}.
\end{proof}

\begin{cor}
     Let $Y$ be an arbitrary filling of a Young diagram $\lambda$. Then $I(Y)$ is Cohen-Macaulay if and only if $\lambda = (n,n-1,\ldots,2,1)$ and $Y$ is weakly increasing.
 \end{cor}
 \begin{proof} First, assume that $I(Y)$ is Cohen-Macaulay. Then $I_\lambda$ is Cohen-Macaulay. By Proposition \ref{CM_Ilambda}, $\lambda = (n,n-1,\ldots,2,1)$. Assume by contradiction that $Y$ is not weakly increasing. Let $j$ be the smallest index such that $w(i,j) > w(i,j+1)$ for some $i$. We have 
 $$\sqrt{I(Y) : x_i^{w(i,j+1)}} = I_\lambda + (y_{j+1}, y_\ell \mid \ell \in V)$$
 for some subset $V$ of $\{j+2, \ldots, n\}$. By Proposition \ref{CN_Ilambda} and Theorem \ref{thm_Hochster_depth}, 
 $$\depth S/I(Y) < n,$$
 a contradiction.

     Now assume that $\lambda = (n,n-1,\ldots,1)$ and $Y$ is weakly increasing. By Corollary \ref{cor_depth_weakly_increasing}, $\depth S/I(Y) = \depth S/I_\lambda$. The conclusion follows.
 \end{proof}

 \begin{rem} By \cite{FSTY}, we know that $I(Y)$ is Cohen-Macaulay if and only if it is unmixed. However, it was not known that this is equivalent to the condition of weakly increasing weights.  
 \end{rem}
 
\begin{exam}
    Consider the following filling of a Young diagram 
    \begin{center}
\begin{tabular}{ccc}
$\begin{ytableau}
*(white) 1 & *(white) 3 & *(white) 5 & *(white) 6 & *(white) 7\\
*(white) 2 & *(white) 3 & *(yellow) 4 & *(white) 6   \\
*(white) 2 & *(white) 3 & *(white) 5   \\
*(white) 2 & *(white) 4\\
*(white) 3
\end{ytableau}$\\[5pt]
\end{tabular}
 \end{center}
Let $I = I(Y)$ and $\lambda$ be the corresponding partition. Then we have $\depth S/I(Y) = 4 = \depth S/I_\lambda-1$ and $\depth S/I(Y)^2 = 2$. That $\depth S/I(Y) = 4$ is because $Y$ is not weakly increasing, and there is only one place (the yellow box) where it is non weakly increasing. Thus, in general, $\depth S/I(Y)^t$ could be larger than $1$ for $t \ge 2$, in contrast to the depth of powers of Ferrers ideals.
\end{exam}

\section{Regularity of tableau  ideals}\label{sec_regularity_tableaux_ideal}

In this section, we compute the regularity of tableau ideals. First, we have the following remark about the homology of the degree complexes of tableau ideals.

\begin{lem}\label{lem_homology} Let $Y$ be an arbitrary filling of a Young diagram $\lambda$. Let $(x^\a y^\b, i)$ be a critical pair of $I(Y)$. Then $i = 0$ or $i = 1$.
\end{lem}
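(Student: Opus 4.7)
The plan is to identify the link $\lk_{\Delta_\a(I(Y))} F$ appearing in any critical pair with the independence complex of a Ferrers subgraph, and then invoke the vanishing $\reg S/I_\mu = 1$ for nonempty Ferrers ideals. By Lemma \ref{lem_associated_radicals_edge_weight}, the associated radical $J := \sqrt{I(Y):x^\a y^\b}$ is of the form $I(G_\lambda\setminus U) + (z_i \mid i\in U)$ for some subset $U$ of the vertex set of $G_\lambda$ (where $z_i$ denotes the variable corresponding to vertex $i$), so the faces of $\Delta_\a(I(Y))$ are precisely the independent sets of $G_\lambda$ that are disjoint from $U$.

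For any face $F$ of $\Delta_\a(I(Y))$ with $F\cap \supp \a=\emptyset$, a direct verification shows that $\lk_{\Delta_\a(I(Y))} F$ coincides with the independence complex of the induced subgraph $H := G_\lambda[V']$, where $V' = [n+m]\setminus(U\cup F\cup N_{G_\lambda}(F))$. The key combinatorial observation here is that the nested-neighborhood property of a Ferrers graph ($N(x_1)\supseteq\cdots\supseteq N(x_n)$ and $N(y_1)\supseteq\cdots\supseteq N(y_m)$) is preserved under taking induced subgraphs; consequently, after reindexing, $H$ is again a Ferrers graph.

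If $H$ has at least one edge, Proposition \ref{CN_Ilambda} gives $\reg S/I(H) = 1$. Since $I(H)$ is squarefree, Remark \ref{rem_critical_sets} shows that $\Gamma(I(H)) = \{\mathbf 0\}$, so Lemma \ref{Key0} translates $\reg S/I(H)=1$ into the vanishing $\tilde H_j(\lk_{\Delta(I(H))}\sigma;\k) = 0$ for every face $\sigma$ and every $j\ge 1$; taking $\sigma=\emptyset$ yields $\tilde H_{i-1}(\lk_{\Delta_\a(I(Y))} F;\k) = 0$ for all $i\ge 2$. If instead $H$ has no edge, then $\Delta(I(H))$ is either a simplex (contractible, hence acyclic) or the complex $\{\emptyset\}$ (whose only nonzero reduced homology is in dimension $-1$, corresponding to $i=0$). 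In all cases, any critical pair must have $i\in\{0,1\}$. The only subtle point in the argument is the identification of the link as the independence complex of a Ferrers subgraph; the remainder is a direct application of Lemma \ref{lem_associated_radicals_edge_weight}, Proposition \ref{CN_Ilambda}, and Lemma \ref{Key0}.
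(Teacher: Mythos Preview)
Your argument is correct and rests on the same core idea as the paper's proof: Ferrers-type edge ideals have regularity at most $1$, and this bound, read through Lemma~\ref{Key0}, forces $i\le 1$. The paper, however, reaches the conclusion in one line by applying the bound directly to the associated radical $J=\sqrt{I(Y):x^\a y^\b}$ itself: by Lemma~\ref{lem_associated_radicals_edge_weight} one has $J=I_\lambda+(\text{some variables})$, so $\reg S/J\le 1$; and the existence of a face $F$ of $\Delta(J)$ with $\widetilde H_{i-1}(\lk_{\Delta(J)}F;\k)\neq 0$ is exactly the statement that $(\mathbf 0,i)$ is a critical pair of $J$, whence $i\le \reg S/J\le 1$.

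Your route takes an extra layer: you first identify $\lk_{\Delta(J)}F$ with the independence complex of an induced Ferrers subgraph $H$, and then apply the same regularity bound to $I(H)$ rather than to $J$. This identification is correct and makes the topology explicit, but it is unnecessary for the lemma at hand, and it forces you into the case split on whether $H$ has edges. The paper's observation that criticality of $(x^\a y^\b,i)$ for $I(Y)$ already yields a critical pair $(\mathbf 0,i)$ for the squarefree ideal $J$ short-circuits all of that.
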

\begin{proof}
By Lemma \ref{lem_associated_radicals_edge_weight}, $J = \sqrt{I(Y):x^\a y^\b} = I_\lambda + (x_i \mid i \in U) + (y_j \mid j \in V)$ for some subsets $U$ and $V$ of $[n]$ and $[m]$ respectively. The conclusion follows from Lemma \ref{Key0} and the fact that $\reg (S/J) \le 1$.
\end{proof}

We now have a simple formula when $\lambda$ has only one row (or one column).

\begin{lem}\label{lem_reg_one_row} Assume that $\lambda = (\lambda_1)$ has only one row. Then 
$$\reg S/I(Y) = \sum_{j=1}^m (w(1,j) - 1) + \max\{ w(1,j) \mid j = 1, \ldots, m\}.$$    
\end{lem}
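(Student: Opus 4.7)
My plan is to compute $\reg(S/I(Y))$ directly via the degree complex formula of Lemma \ref{Key0}, using the explicit description of associated radicals from Lemma \ref{lem_associated_radicals_edge_weight}. Write $x=x_1$ and $w_j=w(1,j)$, and for any exponent $\a=(a,b_1,\ldots,b_m)$ with $a<\max_j w_j$ and $b_j<w_j$ (the range forced by Remark \ref{rem_critical_sets}) set $T=T(\a)=\{j:a<w_j\}$. Applying Lemma \ref{lem_associated_radicals_edge_weight} to the underlying star graph, the only variables that enter the radical are the $y_j$ with $a\ge w_j$, giving
\[J_\a:=\sqrt{I(Y):x^a y^\b}=(xy_j\mid j\in T)+(y_j\mid j\notin T).\]
The facets of $\Delta(J_\a)$ are then exactly the isolated vertex $\{x\}$ and the simplex $\sigma_T$ on $\{y_j:j\in T\}$, so $\Delta(J_\a)$ is the disjoint union of a vertex and a simplex whenever $T\neq\emptyset$.

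The heart of the argument is the case analysis of faces $F$ of $\Delta(J_\a)$ with $F\cap\supp\a=\emptyset$; by Lemma \ref{lem_homology} only $i\in\{0,1\}$ can contribute. If $F=\emptyset$ and $T\neq\emptyset$, then $\lk_{\Delta(J_\a)}F=\Delta(J_\a)$ is disconnected and $\h_0\neq 0$, giving the contribution $|\a|+1$; this is maximized by $a=\max_j w_j-1$ (so $T$ consists of the columns of maximum weight) and $b_j=w_j-1$, yielding the value $\max_j w_j+\sum_j(w_j-1)$. If $F=\{x\}$, then $a=0$, $\lk F=\{\emptyset\}$, $i=0$, and $|\a|\le\sum_j(w_j-1)$, strictly smaller. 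If $\emptyset\neq F\subseteq\sigma_T$, then $\lk F=\sigma_{T\setminus F}$ is a nonempty simplex (and hence acyclic) except when $F=T$; in that remaining subcase $b_j=0$ for $j\in T$ and the constraint $a<\min_{j\in T}w_j$ forces $|\a|\le(\min_{j\in T}w_j-1)+\sum_{j\notin T}(w_j-1)$, which lies at least one below the extremal value because the missing quantity $\sum_{j\in T}(w_j-1)+(\max_j w_j-\min_{j\in T}w_j+1)$ is always at least $1$.

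Combining these bounds with Lemma \ref{Key0} yields the desired formula
\[\reg(S/I(Y))=\max_j w_j+\sum_j(w_j-1)=\sum_{j=1}^m(w(1,j)-1)+\max\{w(1,j)\mid j=1,\ldots,m\}.\]
The main obstacle I anticipate is the bookkeeping in the final subcase, where one must verify that the constraint $a<\min_{j\in T}w_j$ forces a strict drop for every nonempty $T\subseteq[m]$; this is where the assumption that all $w_j\ge 1$ is crucially used.
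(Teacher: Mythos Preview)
Your proof is correct and follows essentially the same approach as the paper: both use the degree complex formula (Lemma~\ref{Key0}), the bound $i\le 1$ from Lemma~\ref{lem_homology}, and the same explicit extremal pair $a=\max_j w_j-1$, $b_j=w_j-1$ with $F=\emptyset$. The only difference is that the paper obtains the upper bound in one line from Remark~\ref{rem_critical_sets} and Lemma~\ref{lem_homology} without any case analysis on $F$, so your careful treatment of the cases $F=\{x\}$ and $F=T$ is correct but unnecessary.
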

\begin{proof} Let $(x^\a y^\b,i)$ be an extremal pair of $I(Y)$. By Remark \ref{rem_critical_sets}, we must have $b_j \le w(1,j) - 1$ and $a_1 \le \max \{w(1,j) \mid j = 1, \ldots, m\} - 1$. Furthermore, by Lemma \ref{lem_homology}, we deduce that $i \le 1$. Hence, 
$$\reg S/I(Y) \le \sum_{j=1}^m (w(1,j) - 1) + \max\{ w(1,j) \mid j = 1, \ldots, m\}.$$
Now, we choose $a_1 = \max \{w(1,j) \mid j = 1, \ldots, m\} - 1$ and $b_j = w(1,j) - 1$. Then
$$\sqrt{I(Y) : x^\a y^\b} = I_\lambda + (y_j \mid w(1,j) \le  a_1).$$
In particular, $\h_0(\Delta_{(\a,\b)}(I(Y));\k) \neq 0$. The conclusion follows from Lemma \ref{Key0}.    
\end{proof}

We may now assume that $Y$ has at least two rows and two columns. As in the previous section, we fix the following notation throughout the section:
\begin{enumerate}
    \item $\lambda = (\lambda_1,\ldots, \lambda_n)$ is a partition with $\lambda_1 = m$.
    \item $Y$ is a filling of $\lambda$ with the value $w_{ij}$ on the $ij$-th box of $\lambda$. 
    \item $\omega = \min (w(i,j)  \mid 1 \le i \le n, 1 \le j \le \lambda_i)$ is the minimum weight.
    \item $(\gamma,\delta)$ is a minimal box of $Y$.
\end{enumerate}

We now give a structure result about Stanley-Reisner complexes of bipartite graphs that will be useful later.
\begin{lem}\label{lem_homology_0_bipartite_graph} Let $\Delta = \Delta(I(G))$ be the simplicial complex of the edge ideal of a non-empty bipartite graph $G$. Assume that $\h_0(\Delta;\k) \neq 0$. Then $G$ is a complete bipartite graph. In other words, $\Delta = \langle X, Y\rangle$ with $X,Y$ are the partitions of $V(G)$.    
\end{lem}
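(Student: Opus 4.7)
The plan is to translate the homological hypothesis $\widetilde{H}_0(\Delta;\k)\neq 0$ into the combinatorial statement that $\Delta$ is disconnected, and then use the bipartite structure of $G$ to force every cross pair $\{x,y\}$ with $x\in X,\,y\in Y$ to be an edge of $G$.

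First I would recall the standard fact that $\widetilde{H}_0(\Delta;\k)\neq 0$ is equivalent to $\Delta$ having at least two connected components (so in particular $\Delta$ has at least two vertices and is not void). Next, I would rule out isolated vertices of $G$: if some $v\in V(G)$ were isolated, then $v$ divides no minimal generator of $I(G)$, so $v\in F$ for every facet $F$ of $\Delta$, making $\Delta$ a cone over $v$ and hence acyclic — contradicting $\widetilde{H}_0(\Delta;\k)\neq 0$. Consequently both parts $X$ and $Y$ of the bipartition are nonempty.

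The key observation is then that, because $G$ is bipartite, $X$ is itself an independent set and hence a face of $\Delta$; likewise for $Y$. In particular all vertices of $X$ lie in a single connected component of the $1$-skeleton of $\Delta$, and the same holds for $Y$. Since $\Delta$ is disconnected, the components containing $X$ and $Y$ must be distinct, so no face of $\Delta$ meets both $X$ and $Y$. Equivalently, for every $x\in X$ and every $y\in Y$ the pair $\{x,y\}$ fails to be a face of $\Delta$, i.e., $\{x,y\}\in E(G)$. Hence $G$ is the complete bipartite graph on $X\cup Y$, and the independent sets of $G$ are exactly the subsets of $X$ and of $Y$, giving $\Delta=\langle X,Y\rangle$.

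I do not anticipate a real obstacle: the argument is essentially a three-way translation between the vanishing of $\widetilde{H}_0$, connectivity of the $1$-skeleton of $\Delta$, and the defining property of a complete bipartite graph. The only subtlety is to exclude isolated vertices before talking about the bipartition, which is immediately handled by the cone observation.
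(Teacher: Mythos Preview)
Your argument is correct and complete. The paper, however, takes a different and much terser route: it observes that $\widetilde{H}_0(\Delta;\k)\neq 0$ forces $\depth S/I(G)\le 1$, notes that bipartiteness gives $\depth S/I(G)\ge 1$, and then invokes \cite[Lemma 3.1]{T}, which characterizes bipartite graphs with $\depth S/I(G)=1$ as complete bipartite. So the paper defers the combinatorial content to an external reference and stays within the depth framework already used elsewhere in the article. Your proof, by contrast, is entirely self-contained: you translate $\widetilde{H}_0\neq 0$ directly into disconnectedness of $\Delta$, use that $X$ and $Y$ are themselves faces (hence each spans a single component), and read off that every cross pair $\{x,y\}$ must be a non-face, i.e., an edge. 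The cone argument to exclude isolated vertices is a nice touch that the paper's approach gets for free from the cited lemma. Your version is more elementary and more transparent about why the statement holds; the paper's version is shorter and consistent with its reliance on depth machinery throughout.
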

\begin{proof}
    Since $\h_0(\Delta;\k) \neq 0$, $\depth S/I(G) \le 1$. Since $G$ is bipartite, we deduce that $\depth S/I(G) = 1$. The conclusion follows from \cite[Lemma 3.1]{T}.
\end{proof}
We also have the following two key lemmas to compute the regularity of $I(Y)$. 

\begin{lem}\label{lem_reg_reduction_to_weight_1} Assume that $Y$ has more than one box. Let $(\gamma,\delta)$ be a minimal box of $Y$. Let $Y'$ be a filling obtained by replacing the value $\omega$ on the $(\gamma,\delta)$ box by $1$. Then 
$$\reg S/I(Y) \le \reg S/I(Y') + (\omega - 1).$$   
\end{lem}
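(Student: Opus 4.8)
The plan is to reduce the regularity of $I(Y)$ to that of $I(Y')$ by analyzing how critical pairs of $I(Y)$ behave with respect to the exponent in the $(\gamma,\delta)$ direction. By Lemma~\ref{Key0} and Remark~\ref{rem_critical_sets}, it suffices to take an extremal pair $(x^\a y^\b, i)$ of $I(Y)$ lying in the finite set $\Gamma(I(Y))$ and to produce from it a critical pair of $I(Y')$ whose weighted degree is at least $|\a|+|\b|+i-(\omega-1)$. Since $x^\a y^\b \notin I(Y)$, the key constraint at the minimal box is that $\min(a_\gamma, b_\delta) < \omega$; I would record which of the two coordinates is the small one. The goal is to strip off the large exponent in the other coordinate — say $b_\delta$ if $b_\delta \ge \omega$ and $a_\gamma < \omega$ — replacing it by the smaller threshold dictated by $Y'$, where the $(\gamma,\delta)$ weight is now $1$.

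The central computation is to compare $\sqrt{I(Y):x^\a y^\b}$ with $\sqrt{I(Y'):x^{\a} y^{\b'}}$ for a suitably truncated $\b'$ (or $\a'$), using Lemma~\ref{lem_associated_radicals_edge_weight}. Because lowering the weight at $(\gamma,\delta)$ from $\omega$ to $1$ only affects the edge through the box $(\gamma,\delta)$, the associated radical $I_\lambda + (x_i \mid i \in U) + (y_j \mid j \in V)$ changes in a controlled way: replacing $\omega$ by $1$ can only enlarge the set $U \cup V$ of vertices that get killed, or leave it unchanged, when we lower the exponent at $\delta$ (resp.\ $\gamma$) accordingly. I would show that with the truncated exponent the degree complex $\Delta_{\a}(I(Y'))$ still carries the same nonvanishing reduced homology in the same homological degree $i$ (here Lemma~\ref{lem_homology} guarantees $i \in \{0,1\}$, so the homological bookkeeping is mild), with a face $F$ disjoint from the support of the truncated exponent. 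The loss in total degree is exactly the amount stripped from the oversized coordinate, and since that coordinate was at least $\omega$ in $Y$ but need only be $\omega-1$ (one below the new threshold near the box, matching Remark~\ref{rem_critical_sets} for $Y'$), the degree drops by at most $\omega-1$.

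Thus from an extremal pair of $I(Y)$ of weighted degree $\reg S/I(Y)$ I obtain a critical pair of $I(Y')$ of weighted degree at least $\reg S/I(Y) - (\omega-1)$, which by Lemma~\ref{Key0} gives $\reg S/I(Y') \ge \reg S/I(Y) - (\omega-1)$, i.e.\ the claimed inequality. I expect the main obstacle to be the precise bookkeeping of the truncation: one must check that after lowering $b_\delta$ (or $a_\gamma$) to the value forced by the weight $1$ at the minimal box, no \emph{other} vertex that was previously killed becomes unkilled, so that the face $F$ witnessing the homology of $\lk_{\Delta_\a(I(Y))} F$ survives into $\Delta_{\a}(I(Y'))$. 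This hinges on the minimality of $(\gamma,\delta)$ — every box weakly above and to the left has strictly larger weight $\omega$ — which is what ensures that truncating only the $\delta$th (or $\gamma$th) coordinate does not disturb the radical structure elsewhere. Once that invariance of $U \cup V$ away from the box is established, the degree comparison and the appeal to Lemma~\ref{Key0} are routine.
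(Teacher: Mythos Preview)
Your setup matches the paper's opening moves: take an extremal pair $(x^\a y^\b,i)$ of $I(Y)$, note that $\min(a_\gamma,b_\delta)<\omega$, and try to produce a critical pair of $I(Y')$ by truncating one coordinate. When one of $a_\gamma,b_\delta$ is at least $\omega$, or one of them is already $0$, or $x_\gamma$ (resp.\ $y_\delta$) already lies in $\sqrt{I(Y):x^\a y^\b}$ for some other reason, your truncation does give the \emph{same} associated radical for $I(Y')$, and the argument goes through. These are precisely Steps~1--3 of the paper's proof.

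The gap is the residual case $0<a_\gamma<\omega$, $0<b_\delta<\omega$, with neither $x_\gamma$ nor $y_\delta$ in $\sqrt{I(Y):x^\a y^\b}$. Here any truncation (say $a'_\gamma=0$) forces $x_\gamma$ into $\sqrt{I(Y'):x^{\a'}y^\b}$, because the edge $(\gamma,\delta)$ now has weight $1\le b_\delta$. So the degree complex of $I(Y')$ is the \emph{restriction} of $\Delta=\Delta_{(\a,\b)}(I(Y))$ to the complement of $x_\gamma$, and your claim that it ``still carries the same nonvanishing reduced homology'' is exactly what fails in general: restriction does not preserve $\widetilde H_{i-1}$. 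Your bookkeeping worry (``no other vertex becomes unkilled'') looks in the wrong direction; the problem is that a \emph{new} vertex gets killed, and you then need to control homology under deletion.

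The paper handles this residual case with substantial extra work. First it reduces to $F=\emptyset$ by induction via Lemma~\ref{lem_regularity_equality_adding_variable}. Then it uses the long exact sequence relating $\widetilde H_*(\Delta)$, $\widetilde H_*(\Gamma)$ (the restriction), and $\widetilde H_*(\lk_\Delta x_\gamma)$ to push into the case $i=1$ and $\Delta=\langle\{x_\gamma\},\{y_\delta\}\rangle$. Finally it shows this last configuration is impossible by building a directed bipartite graph on the nonzero coordinates of $(\a,\b)$ away from $(\gamma,\delta)$ and extracting a directed cycle, which yields a chain of strict inequalities $a_{u_1}<b_{v_1}<a_{u_2}<\cdots<a_{u_1}$. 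None of this follows from the minimality of $(\gamma,\delta)$ alone, and your outline does not supply a substitute.
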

\begin{proof} If $\omega = 1$ the conclusion is vacuous. Thus, we may assume that $\omega > 1$. Let $(x^\a y^\b,i)$ be an extremal pair of $S/I(Y)$, i.e., we have $\reg S/I(Y) = |\a| + |\b| + i$ and there exists a face $F$ of $\Delta = \Delta_{(\a,\b)}(I(Y))$ such that $\h_{i-1}(\lk_{\Delta} F;\k) \neq 0$. For ease of reading we divide the proof into several steps.

  \vskip0.5em \noindent {\bf Step 1.} Reduction to the case $a_\gamma < \omega$ and $b_\delta < \omega$. Indeed, assume that $a_\gamma \ge \omega$. Then $b_\delta < \omega$. Let $\b'$ be an exponent such that 
  \begin{equation}\label{eq_b}
      b'_j = \begin{cases} b_j & \text{ if } j \neq \delta, \\
  0 & \text{ if } j = \delta.\end{cases}
  \end{equation}
  By Lemma \ref{lem_associated_radicals_edge_weight}, 
  $$\sqrt{I(Y) : x^\a y^\b} = \sqrt{I(Y'):x^\a y^{\b'}}.$$
  By Lemma \ref{Key0}, $(x^\a y^{\b'},i)$ is a critical exponent of $I(Y')$. Hence, 
  $$\reg S/I(Y') \ge |\a| + |\b'| + i \ge \reg S/I(Y) - (\omega - 1).$$
Thus, we may assume that $a_\gamma < \omega$ and $b_\delta < \omega$. 

\vskip0.5em \noindent {\bf Step 2.} Reduction to the case $a_\gamma > 0$ and $b_\delta > 0$. Assume that $a_\gamma = 0$, we set $\b'$ as in Eq. \eqref{eq_b}. Then,
  $$\sqrt{I(Y) : x^\a y^\b} = \sqrt{I(Y'):x^\a y^{\b'}},$$
  and we deduce the required inequality by Lemma \ref{Key0}. Thus, we may assume that $0 < a_\gamma < \omega$ and $0 < b_\delta < \omega$. 
  
 \vskip0.5em \noindent {\bf Step 3.} Reduction to the case $x_\gamma, y_\delta \notin \sqrt{I(Y):x^\a y^\b}$. Let $\a'$ be an exponent such that   
  \begin{equation}\label{eq_a}
      a'_j = \begin{cases} a_j & \text{ if } j \neq \gamma, \\
  0 & \text{ if } j = \gamma.\end{cases}
  \end{equation}
If $x_\gamma \in \sqrt{I(Y):x^\a y^\b}$ then $$\sqrt{I(Y'):x^{\a'}y^{\b}} =       \sqrt{I(Y) : x^{\a} y^{\b}}.$$
In other words $(x^{\a'} y^{\b},i)$ is a critical pair of $I(Y')$. By Step 1, $|\a'| \ge |\a| - (\omega-1)$. The conclusion follows from Lemma \ref{Key0}. Thus, we obtain the reduction step.

\vskip0.5em \noindent {\bf Step 4.} Reduction to the case $F = \emptyset$. By Step 2, $a_\gamma, b_\delta > 0$. Thus, $x_\gamma,y_\delta \notin \supp F$. Assume that $F \neq \emptyset$, say $z \in F$. By Lemma \ref{lem_regularity_equality_adding_variable}, $\reg S/I(Y) = \reg S/(I(Y) + (z))$. By induction, we have 
$$\reg S/(I(Y) + (z)) \le \reg S/(I(Y') + (z)) + (\omega - 1).$$
By Lemma \ref{lem_depth_reg_adding_variable}, we deduce that 
$$\reg S/I(Y) \le \reg S/I(Y') + (\omega- 1),$$
as required. Thus, we may assume that $F = \emptyset.$

\vskip0.5em \noindent {\bf Step 5.} Reduction to the case $i = 1$. Let 
$$ \Delta = \Delta_{(\a,\b)} (I(Y)), \Gamma = \Delta_{(\a',\b)} (I(Y')), \text{ and }    \Sigma = \Delta_{(\a,\b')} (I(Y')).$$
We have $\Gamma$ and $\Sigma$ are the restriction of $\Delta$ to $X \cup Y \setminus \{x_\gamma\}$ and $X \cup Y \setminus \{y_\delta\}$ respectively. By Lemma \ref{lem_homology}, $i = 0$ or $i = 1$. Assume that $i = 0$. From the exact sequence
$$\cdots \to \h_{-1} (\lk_\Delta x_\gamma;\k) \to \h_{-1} (\Gamma;\k) \to \h_{-1} (\Delta;\k) \to 0$$
we deduce that $\h_{-1}(\Gamma;\k) \neq 0$. Thus, $(x^{\a'}y^{\b},i)$ is a critical pair of $I(Y')$. By Lemma \ref{Key0}, the conclusion follows. Thus, we may assume that $i = 1$.

\vskip0.5em \noindent {\bf Step 6.} Reduction to the case $\Delta = \langle \{x_\gamma\}, \{y_\delta\} \rangle$. By Step 5, we assume that $i = 1$, i.e., $\h_{0}(\Delta;\k) \neq 0$. Consider the long exact sequence
$$\cdots \to \h_0(\Gamma;\k) \to \h_0(\Delta;\k) \to \h_{-1} (\lk_\Delta x_\gamma; \k) \to  \cdots \to 0.$$
If $\h_0(\Gamma;\k) \neq 0$, the conclusion also follows from Lemma \ref{Key0}. Thus, we may assume that $\h_0(\Gamma;\k) = 0$. Hence, $\h_{-1} (\lk_{\Delta} x_\gamma;\k) \neq 0$. Similarly, we may assume that $\h_{-1}(\lk_\Delta y_\delta;\k) \neq 0$. Thus, we have $\Delta = \{x_\gamma\} \cup \{y_\delta\} \cup \Delta'$ where $\Delta'$ is a simplicial complex on $X \cup Y \setminus \{x_\gamma,y_\delta\}$. If $x_j \in \Delta'$ then $x_\gamma x_j \in I_\Delta$, which is a contradiction. Thus, we must have $\Delta'$ is the empty simplicial complex. In other words, $\Delta = \langle \{x_\gamma\},\{y_\delta\}\rangle$ and $I_\Delta = \sqrt{I(Y):x^\a y^\b} = (x_\gamma y_\delta) + (x_j \mid j \neq \gamma) + (y_\ell \mid \ell \neq \delta)$.

\vskip0.5em \noindent {\bf Step 7.} Conclusion step. Let $U = \{j \mid j \neq \gamma \text{ and } a_j \neq 0\}$ and $V = \{ j \mid j \neq \delta \text{ and } b_j \neq 0\}$. Let $D$ be a directed bipartite graph on vertices $U \cup V$ whose edges are $(j,\ell)$ if $a_j < w(j,\ell) \le b_\ell$ and $(\ell,k)$ if $b_\ell < w(k,\ell) \le a_k$. If $a_j = 0$ and $b_\ell = 0$ for all $j \neq \gamma$ and all $\ell \neq \delta$ then $\reg S/I(Y) = 2\omega - 1$. Since $Y$ has at least two boxes, by degree reason, we also have $\reg S/I(Y') \ge 2\omega - 1$. Thus, the conclusion is clear. Hence, we may assume that $|U| + |V| \ge 1$. By Step 6, for any $j \in U$, $x_j \in \sqrt{x^\a y^\b}$. Thus, there must exist $\ell \in V$ such that $a_j < w(j,\ell) \le b_\ell$. In other words, in $D$, there is an edge going out from $j$. Similarly, if $\ell \in V$, there must exist an edge going out from $\ell$. Thus $U$ and $V$ are non-empty sets and $D$ has a directed cycle (the directed cycle might have length $2$). Assume that $u_1, v_1,u_2,v_2,\ldots, u_s,v_s$ is a directed cycle of $D$ where $u_j \in U$ and $v_j\in V$. By definition, $a_{u_j} < w(u_j,v_j) \le b_{v_j}$ and $b_{v_j} < w(u_{j+1},v_j) \le a_{u_{j+1}}$ for any $j = 1, \ldots, s$ where $u_{s+1} = u_1$. This is a contradiction. That concludes the proof of the Lemma.
\end{proof}

\begin{lem}\label{lem_reg_lower_bound} Let $Y$ be a filling of a Young diagram with at least two boxes. Let $(\gamma,\delta)$ be a minimal box of $Y$ with $w(\gamma,\delta) = \omega$. We have 
$$\reg S/I(Y) \ge (\omega -1) + \reg (S/(I(Y) + (x_\gamma)).$$    
\end{lem}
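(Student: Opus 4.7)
My plan is to exhibit, given any extremal critical pair of $J := I(Y) + (x_\gamma)$ via Lemma~\ref{Key0}, a critical pair of $I(Y)$ whose total degree is exactly $\omega - 1$ larger. The case $\omega = 1$ collapses to Lemma~\ref{lem_depth_reg_adding_variable}(2), so I assume $\omega \ge 2$. Pick an extremal pair $(x^{\a'} y^{\b'}, i, F)$ of $J$: write $\reg S/J = |\a'| + |\b'| + i$, where $F$ is a face of $\Delta' := \Delta_{(\a', \b')}(J)$ with $F \cap \supp(\a', \b') = \emptyset$ and $\h_{i-1}(\lk_{\Delta'} F; \k) \neq 0$. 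Since $x_\gamma \in J$, one has $a'_\gamma = 0$, and Lemma~\ref{lem_associated_radicals_edge_weight} gives $\sqrt{J : x^{\a'} y^{\b'}} = \sqrt{I(Y) : x^{\a'} y^{\b'}} + (x_\gamma)$.

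The first candidate is the lift $(\a, \b) := (\a' + (\omega - 1) \mathbf{e}_\gamma, \b')$ with the same face $F$. By the minimality $w(\gamma, j) \ge \omega > a_\gamma$, one checks $x^\a y^\b \notin I(Y)$, and Lemma~\ref{lem_associated_radicals_edge_weight} shows that $\sqrt{I(Y) : x^\a y^\b}$ coincides with $\sqrt{J : x^{\a'} y^{\b'}}$ up to whether $x_\gamma$ lies in it. This splits into Case~A (where some $b'_j \ge w(\gamma, j)$, so $x_\gamma$ is already in $\sqrt{I(Y) : x^\a y^\b}$ and the degree complexes $\Delta := \Delta_{(\a, \b)}(I(Y))$ and $\Delta'$ coincide---done immediately) and Case~B (where $\Delta$ is obtained from $\Delta'$ by adjoining $x_\gamma$ as a new vertex).

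In Case~B I analyse $\lk_\Delta F$. If $F$ contains some $y_j$ with $j \le \lambda_\gamma$, then $F \cup \{x_\gamma\} \notin \Delta$ (because $x_\gamma y_j$ is a Ferrers edge), so $\lk_\Delta F = \lk_{\Delta'} F$ and the same face witnesses the required homology. Otherwise, the Mayer--Vietoris decomposition $\lk_\Delta F = \lk_{\Delta'} F \cup \st(x_\gamma, \lk_\Delta F)$---with contractible star and intersection $\lk_\Delta(F \cup \{x_\gamma\})$---yields a long exact sequence from which either $\h_{i-1}(\lk_\Delta F; \k) \neq 0$ (so $F$ works again) or else $\h_{i-1}(\lk_\Delta(F \cup \{x_\gamma\}); \k) \neq 0$.

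In this last sub-case the homology is carried by $F \cup \{x_\gamma\}$, which is incompatible with $\supp(\a, \b) \ni \gamma$. I then switch to the alternative lift $(\tilde{\a}, \tilde{\b}) := (\a', \b' + (\omega - 1) \mathbf{e}_\delta)$ with face $F \cup \{x_\gamma\}$. The Case~B bound $b'_\delta < \omega$, together with the minimality $w(i, \delta) \ge \omega$ for every $i$, gives $x^{\tilde{\a}} y^{\tilde{\b}} \notin I(Y)$ and, via Lemma~\ref{lem_associated_radicals_edge_weight}, that both sets $U_X$ and $U_Y$ are preserved, so the degree complex is still $\Delta$; since $y_\delta \notin F$ (forced by $F \cup \{x_\gamma\} \in \Delta$ and $\delta \le \lambda_\gamma$), one has $(F \cup \{x_\gamma\}) \cap \supp(\tilde{\a}, \tilde{\b}) = \emptyset$, and $(x^{\tilde{\a}} y^{\tilde{\b}}, i, F \cup \{x_\gamma\})$ is the required critical pair of $I(Y)$ of total degree $\reg S/J + (\omega - 1)$. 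The main technical obstacle will be verifying in this final sub-case that the $b_\delta$-bump does not inadvertently enlarge $U_X$ to include $\gamma$ (which would remove $x_\gamma$ as a vertex of $\Delta$) nor change any entry of $U_Y$; when $b'_\delta > 0$ this forces a more delicate choice, possibly redistributing the $\omega - 1$ increment between $\mathbf{e}_\gamma$ and $\mathbf{e}_\delta$ while keeping a face compatible with the resulting support.
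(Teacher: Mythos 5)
Your opening moves are sound and parallel the paper's: the case $\omega=1$ via Lemma \ref{lem_depth_reg_adding_variable}, the lift $a_\gamma:=\omega-1$, and the easy branch (your Case~A, where some $b'_j\ge w(\gamma,j)$ puts $x_\gamma$ into $\sqrt{I(Y):x^{\a}y^{\b}}$ so the degree complexes coincide) all work, as does the observation that either $\h_{i-1}(\lk_\Delta F;\k)\ne 0$ or $\h_{i-1}(\lk_\Delta(F\cup\{x_\gamma\});\k)\ne 0$. The genuine gap is the last sub-case, which you flag but do not close, and it is not a removable technicality: it is essentially \emph{the} main case. Your fallback lift $(\tilde\a,\tilde\b)=(\a',\b'+(\omega-1)\e_\delta)$ with witness face $F\cup\{x_\gamma\}$ fails as soon as $b'_\delta>0$: then $\tilde b_\delta=b'_\delta+\omega-1\ge\omega=w(\gamma,\delta)$ while $\tilde a_\gamma=0$, so by Lemma \ref{lem_associated_radicals_edge_weight} the variable $x_\gamma$ lies in $\sqrt{I(Y):x^{\tilde\a}y^{\tilde\b}}$ and $F\cup\{x_\gamma\}$ is not even a face of the new degree complex (moreover other $x_i$ with $a'_i<w(i,\delta)\le \tilde b_\delta$ may get killed, and $x^{\tilde\a}y^{\tilde\b}$ may even lie in $I(Y)$). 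The suggested ``redistribution'' cannot rescue this: to keep $x_\gamma$ alive one needs $a_\gamma\ge b'_\delta>0$ whenever $b_\delta\ge\omega$ is avoided only by capping $b_\delta\le\omega-1$, and then $\gamma\in\supp\tilde\a$ clashes with the requirement that the witness face $F\cup\{x_\gamma\}$ be disjoint from the support. Finally, the sub-case with $b'_\delta>0$ does occur: after the normalizations in the paper's proof, extremality actually \emph{forces} every $b_j$ on the $\gamma$-th row (including $b_\delta$) to be positive, and the situation $\h_{i-1}(\lk_\Delta F)=0$, $\h_{i-1}(\lk_\Delta(F\cup\{x_\gamma\}))\ne 0$ is exactly the configuration treated in the paper's Steps 5--7.

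The paper resolves this case by a different mechanism that you would need to reproduce or replace: it first reduces to $F=\emptyset$ using Lemma \ref{lem_regularity_equality_adding_variable} together with induction on the diagram, forces $b_j>0$ for all $j\le\lambda_\gamma$ by extremality, replaces all weights on the $\gamma$-th row by $\omega$, uses Lemma \ref{lem_homology_0_bipartite_graph} and the two long exact sequences to pin down $\Gamma$ and $\Delta$ and to conclude $\gamma=n$, and only then modifies the exponent by setting $b_\delta:=\omega$ (deliberately killing $x_\gamma$, which is harmless because the minimality of the box gives $w(j,\delta)>\omega$ for $j<n$, so nothing else changes) while keeping $a_\gamma=\omega-1$. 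The resulting degree complex is $\Gamma$ itself, the original homology witness applies with empty face, and the degree gain is $(\omega-1)+(\omega-b_\delta)\ge\omega$. In short: rather than trying to preserve $x_\gamma$ as a vertex and use a face through it, the paper arranges (after several reductions you do not have) to eliminate $x_\gamma$ at a controlled cost. Without an argument of this kind, your final sub-case remains open, so the proposal as written does not prove the lemma.
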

\begin{proof} If $\omega =1$ the conclusion follows from Lemma \ref{lem_depth_reg_adding_variable}. Thus, we may assume that $\omega > 1$. Let $(x^\a y^\b,i)$ be an extremal pair of $J = I(Y) + (x_\gamma)$. In particular, $a_\gamma = 0$. Let $\Gamma = \Delta_{(\a,\b)}(J)$ and $F$ is a face of $\Gamma$ such that $F \cap (\supp \a \cup \supp \b)  = \emptyset$ and $\h_{i-1}(\lk_\Gamma F;\k) \neq 0$. For ease of reading, we divide the proof into several steps. 

\vskip0.5em \noindent {\bf Step 1.} Reduction to the case $b_j \le w(\gamma,j) - 1$ for all $j = 1, \ldots,\lambda_\gamma$. Let $\a'$ be an exponent such that $a'_\gamma = \omega-1$ and $a'_i = a_i$ otherwise. If $\sqrt{J:x^\a y^\b} = \sqrt{I(Y):x^{\a'} y^\b}$ then $(x^{\a'}y^\b,i)$ is a critical pair of $I(Y)$. By Lemma \ref{Key0}, the conclusion follows. By Lemma \ref{lem_associated_radicals_edge_weight}, we may assume that $x_\gamma \notin \sqrt{I(Y):x^{\a'} y^{\b}}$ and 
\begin{equation}
    \sqrt{J:x^{\a}y^{\b}} = \sqrt{I(Y):x^{\a'}y^{\b}}  + (x_\gamma).
\end{equation}
Hence, $b_j \le w(\gamma,j) - 1$ for all $j = 1, \ldots, \lambda_\gamma$.

\vskip0.5em \noindent {\bf Step 2.} Reduction to the case $F = \emptyset.$ Assume that $F \neq \emptyset$ and $z$ is an element of $F$. By Lemma \ref{lem_regularity_equality_adding_variable}, $\reg S/J = \reg S/(J,z)$. Since $z \notin \{x_\gamma,y_\delta\}$, by induction, 
$$\reg S/(I(Y),z) \ge (\omega - 1) + \reg (S/(J,z)).$$
By Lemma \ref{lem_depth_reg_adding_variable}, we deduce that 
$$\reg S/I(Y) \ge \reg S/(I(Y),z) \ge (\omega - 1) + \reg S/J.$$
Thus, we may assume that $F = \emptyset$. 

\vskip0.5em \noindent {\bf Step 3.} Reduction to the case $b_j > 0$ for all $j = 1, \ldots, \lambda_\gamma$. Assume that $b_j = 0$ for some $j < \lambda_\gamma$. Let $\b'$ be an exponent such that 
 \begin{equation}\label{eq_3}
      b'_\ell = \begin{cases} b_\ell & \text{ if } \ell \neq j, \\
  \omega - 1 & \text{ if } \ell = j.\end{cases}
  \end{equation}
By Lemma \ref{lem_associated_radicals_edge_weight}, 
$$\sqrt{J:x^\a y^\b} = \sqrt{J: x^\a y^{\b'}}.$$
By Step 2, $F = \emptyset$. Thus $(x^{\a}y^{\b'},i)$ is also a critical pair of $J$. Since $|\a| + |\b'| > |\a| + |\b|$, this is a contradiction. Thus, we may assume that $b_j > 0$ for all $j = 1, \ldots, \lambda_\gamma$. 

\vskip0.5em \noindent {\bf Step 4.} Reduction to the case $w(\gamma,j) = \omega$ for all $j = 1, \ldots, \lambda_\gamma$. Let $Z$ be the tableau obtained by replacing all weights on the $\gamma$th row of $Y$ by $\omega$. Then $I(Y) + (x_\gamma) = I(Z) + (x_\gamma)$. We will now claim that $\reg S/I(Y) \ge \reg S/I(Z)$. Let $(\a,\b)$ be an extremal exponent of $I(Z)$. By Remark \ref{rem_critical_sets}, we have $a_\gamma \le \omega - 1$. By Lemma \ref{lem_associated_radicals_edge_weight},
$$\sqrt{I(Z) : x^\a y^\b} = \sqrt{I(Y) : x^\a y^\b},$$
By Lemma \ref{Key0}, $\reg S/I(Y) \ge \reg S/I(Z)$. Thus, we may replace $Y$ by $Z$ and obtain the reduction step.

\vskip0.5em \noindent {\bf Step 5.} Reduction to the case $i = 1$. Assume that $i = 0$, i.e., $\h_{-1}(\Gamma;\k) \neq 0$. Let $\Delta = \Delta_{(\a',\b)}(I(Y))$. Then $\Gamma$ is the restriction of $\Delta$ to $X \cup Y \setminus \{x_\gamma\}$. Consider the long exact sequence 
$$\cdots \to \h_{-1} (\lk_\Delta x_\gamma; \k) \to \h_{-1} (\Gamma ; \k) \to \h_{-1}(\Delta;\k) \to 0.$$
If $\h_{-1} (\Delta;\k) \neq 0$ then $(x^{\a'} y^\b,i)$ is a critical pair of $I(Y)$ and the conclusion follows from Lemma \ref{Key0}. Thus, we may assume that $\h_{-1}(\Delta;\k) = 0$. Hence, $\h_{-1} (\lk_\Delta x_\gamma;\k) \neq 0$. Thus, we have $\Delta = \langle \{ x_\gamma \} \rangle$. In other words, $x_j, y_\ell \in \sqrt{I(Y):x^{\a'} y^{\b}}$ for all $j\neq \gamma$ and all $\ell$. With an argument similar to that of Step 7 of the proof of Lemma \ref{lem_reg_reduction_to_weight_1}, we deduce a contradiction. Thus, we may assume that $i = 1$. In other words, $\h_0(\Gamma;\k) \neq 0$.

\vskip0.5em \noindent {\bf Step 6.} Reduction to the case $x_\gamma$ is the last variable, i.e., $\gamma = n$. By Step 5, we may assume that $\h_0(\Gamma;\k) \neq 0$. By Lemma \ref{lem_homology_0_bipartite_graph}, $\Gamma = \langle U, V \rangle$ with $U \subseteq X$ and $V \subseteq Y$. If $\h_0(\Delta;\k) \neq 0$ then $(x^{\a'} y^\b,i)$ is a critical pair of $I(Y)$. Hence, the conclusion follows from Lemma \ref{Key0}. Thus, we may assume that $\h_0(\Delta;\k) = 0$. From the long exact sequence
$$0 \to \h_0(\lk_\Delta x_\gamma;\k) \to \h_0(\Gamma;\k) \to \h_0(\Delta;\k) \to  \cdots \to 0,$$
we deduce that $\h_0(\lk_\Delta x_\gamma;\k) \neq 0$. By Lemma \ref{lem_homology_0_bipartite_graph}, we deduce that 
$$\Delta = \Gamma \cup x_\gamma * \langle U_1,V_1 \rangle$$
for some $U_1 \subseteq U$ and $V_1 \subseteq V$. Since $I_\Delta$ corresponds to a Ferrers ideal, we must have $U_1 = U$. Hence, 
$$ \sqrt{I(Y):x^{\a'} y^\b} = (x_j y_k \mid j \in U, k \in V) + x_\gamma (y_k \mid k \in V \setminus V_1) + (x_j \mid j \notin U) + (y_k \mid k \notin V).$$
Assume that $\gamma < n$. Then $x_n \in \sqrt{I(Y):x^{\a'} y^\b}$. In particular, $b_j > \omega$ for some $j \le \lambda_n \le \lambda_\gamma$. By Step 1, this is not possible. Thus, $x_\gamma$ is the last variable.

\vskip0.5em \noindent {\bf Step 7.} Conclusion step. By Step 6, $\gamma = n$. By definition, we deduce that $w(j,\delta) > \omega$ for all $j < n$. In other words, $x^{\a'} y^{\b'} \notin I(Y)$ where $\b'$ is an exponent such that $b'_k = b_k$ if $k \neq \delta$ and $b'_\delta = \omega$. Furthermore, 
$$\sqrt{I(Y) : x^{\a'} y^{\b'}} = \sqrt{I(Y):x^{\a'} y^\b} + (x_\gamma) = \sqrt{J : x^\a y^\b}.$$
In other words, $(x^{\a'} y^{\b'},i)$ is a critical pair of $I(Y)$. By Lemma \ref{Key0}, 
$$\reg S/I(Y) \ge |\a'| + |\b'| + i \ge \omega + \reg S/J.$$
That concludes the proof of the Lemma.
\end{proof}

We are now ready for the main result of the section.

\begin{thm}\label{thm_regularity_tableau} Let $Y$ be a filling of a Young diagram with at least two boxes. Let $(\gamma,\delta)$ be a minimal box of $Y$ with $w(\gamma,\delta) = \omega$. We have 
$$\reg S/I(Y) = \omega - 1 + \max \{\reg S/(I(Y) + (x_{\gamma})), \reg S/(I(Y) + (y_{\delta})) \}.$$    
\end{thm}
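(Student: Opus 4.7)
The lower bound
$$\reg S/I(Y) \ge (\omega - 1) + \max\bigl\{\reg S/(I(Y)+(x_\gamma)),\ \reg S/(I(Y)+(y_\delta))\bigr\}$$
will follow from two applications of Lemma \ref{lem_reg_lower_bound}: once as stated with $x_\gamma$, and once, using the row-column symmetry of the minimal-box definition, with $y_\delta$.

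For the upper bound, my plan is first to reduce to the case $\omega = 1$ using Lemma \ref{lem_reg_reduction_to_weight_1}, which gives $\reg S/I(Y) \le (\omega-1) + \reg S/I(Y')$, where $Y'$ is obtained from $Y$ by replacing the weight $\omega$ at $(\gamma,\delta)$ by $1$. Because $I(Y')+(x_\gamma) = I(Y)+(x_\gamma)$ and $I(Y')+(y_\delta) = I(Y)+(y_\delta)$, establishing the theorem for $Y'$ yields it for $Y$; I therefore assume $\omega = 1$, so that $x_\gamma y_\delta \in I(Y)$, and the task becomes to prove $\reg S/I(Y) \le \max\{\reg S/(I(Y)+(x_\gamma)),\ \reg S/(I(Y)+(y_\delta))\}$.

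I now apply Lemma \ref{lem_depth_reg_colon} with the variable $x_\gamma$: either $\reg S/I(Y) = \reg S/(I(Y)+(x_\gamma))$, in which case the bound is trivial, or $\reg S/I(Y) = \reg S/(I(Y):x_\gamma)+1$. In the latter case, since $y_\delta \in I(Y):x_\gamma$, the identity $(I(Y)+(y_\delta)):x_\gamma = I(Y):x_\gamma$ holds, and applying Lemma \ref{lem_depth_reg_colon} once more -- this time to $I(Y)+(y_\delta)$ with the variable $x_\gamma$ -- places $\reg S/(I(Y)+(y_\delta))$ into $\{\reg S/(I(Y)+(x_\gamma,y_\delta)),\ \reg S/I(Y)\}$; if the second alternative is realised we obtain $\reg S/I(Y) = \reg S/(I(Y)+(y_\delta))$ and the bound is closed.

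The main obstacle will be the residual subcase in which $\reg S/(I(Y)+(y_\delta)) = \reg S/(I(Y)+(x_\gamma, y_\delta))$ and simultaneously $\reg S/(I(Y)+(x_\gamma)) < \reg S/I(Y)$, so that both regularities on the right-hand side of the theorem drop strictly below $\reg S/I(Y)$. To rule this out I plan to pass back to extremal pairs: by Lemma \ref{lem_regularity_equality_adding_variable} together with induction on the number of boxes (noting that deleting any row or column other than $\gamma$ or $\delta$ preserves $(\gamma,\delta)$ as a minimal box of the smaller tableau), one reduces to an extremal pair $(x^\a y^\b, i)$ of $I(Y)$ whose witnessing face is empty. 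Lemma \ref{lem_homology} forces $i \in \{0,1\}$, and for $i = 1$ Lemma \ref{lem_homology_0_bipartite_graph} presents the degree complex as a disjoint union $\langle X', Y'\rangle$. Since $x_\gamma y_\delta \in I(Y)$ while $x^\a y^\b \notin I(Y)$, one of $a_\gamma,\ b_\delta$ is zero, and in every configuration other than the collapse $X' = \{x_\gamma\}$, $Y' = \{y_\delta\}$ (with $a_\gamma = b_\delta = 0$) the pair lifts to a critical pair of $I(Y)+(x_\gamma)$ or $I(Y)+(y_\delta)$ of the same total contribution, contradicting the strict inequalities. The collapse configuration is precisely what the directed-cycle argument of Step 7 in the proof of Lemma \ref{lem_reg_reduction_to_weight_1} excludes, completing the upper bound.
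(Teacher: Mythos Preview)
Your plan is sound and the residual-case analysis can be made to work, but it is substantially more laborious than what the paper does. Up through the reduction to $\omega=1$ and the first application of Lemma~\ref{lem_depth_reg_colon} your argument and the paper's coincide. The divergence is at the next step: once $\reg S/I(Y)=\reg S/(I(Y):x_\gamma)+1$, the paper does not apply Lemma~\ref{lem_depth_reg_colon} a second time. Instead it takes an extremal pair $(x^{\a}y^{\b},i)$ of $I(Y):x_\gamma$, notes that $b_\delta=0$ (since $y_\delta\in I(Y):x_\gamma$), and checks via Lemma~\ref{lem_associated_radicals_edge_weight} that
\[
\sqrt{(I(Y):x_\gamma):x^{\a}y^{\b}}=\sqrt{(I(Y)+(y_\delta)):x_\gamma x^{\a}y^{\b}}.
\]
Thus $(x_\gamma x^{\a}y^{\b},i)$ is already a critical pair of $I(Y)+(y_\delta)$, giving $\reg S/(I(Y)+(y_\delta))\ge |\a|+|\b|+1+i=\reg S/I(Y)$ in one stroke. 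No residual subcase arises, no induction on the number of boxes is needed, and the directed-cycle argument is not invoked.

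What your route buys is that it stays entirely at the level of $I(Y)$ rather than passing through $I(Y):x_\gamma$; what it costs is the entire last paragraph. A few points there would need tightening if you carried it out: the case $i=0$ should be treated (it is easy: $\Delta=\{\emptyset\}$ lifts verbatim to $I(Y)+(x_\gamma)$ when $a_\gamma=0$); when only $a_\gamma=0$ you can lift solely to $I(Y)+(x_\gamma)$, so you must argue that $X'=\{x_\gamma\}$ forces $b_\delta=0$ as well (this follows from $x_\gamma\notin\sqrt{I(Y):x^{\a}y^{\b}}$ and $a_\gamma=0$, which give $b_j<w(\gamma,j)$ for all $j$); and you should note that in the collapse case $U\cup V\neq\emptyset$ because $Y$ has at least two boxes, so the cycle argument actually fires. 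None of these is a real obstruction, but the paper's direct lift makes them all unnecessary.
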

\begin{proof} By Lemma \ref{lem_reg_reduction_to_weight_1} and Lemma \ref{lem_reg_lower_bound}, it remains to prove that 
$$\reg S/I(Y) \le \max \{ \reg S/(I(Y) + (x_\gamma)), \reg S/(I(Y) + (y_\delta) ) \}$$
when $\omega = w(\gamma,\delta) = 1$. 
By Lemma \ref{lem_depth_reg_colon}, we may assume that 
$$\reg S/I(Y) = \reg S/I(Y) : x_\gamma + 1 > \reg S/(I(Y) + (x_\gamma)).$$

Let $(x^\a y^\b,i)$ be an extremal pair of $I(Y):x^\gamma$. In particular, $b_\delta = 0$. By Lemma \ref{lem_associated_radicals_edge_weight},
$$\sqrt{(I(Y):x_\gamma) : x^\a y^\b} = \sqrt{(I(Y) + (y_\delta)):(x_\gamma x^\a y^\b)}.$$
Hence, $(x_\gamma x^\a y^\b,i)$ is a critical exponent of $I(Y) + (y_\delta)$. By Lemma \ref{Key0},
$$\reg S/(I(Y) + (y_\delta)) \ge |\a| + |\b| + 1 + i = \reg S/I(Y):x_\gamma + 1.$$
The conclusion follows.
\end{proof}
We now establish Theorem \ref{thm_main}.
\begin{proof}[Proof of Theorem \ref{thm_main}] We prove by induction on the number of boxes of $Y$. If $Y$ has only one box the conclusion is clear. Now assume that $Y$ has at least two boxes and $(\gamma,\delta)$ is a minimal box of $Y$. The shape of $Y$ is $\lambda$ and its conjugate partition is denoted by $\mu$. Let $Y_1$ and $Y_2$ be the tableaux obtained by deleting the $\gamma$th row and $\delta$th column of $Y$ respectively. Also, we let $R_1$ and $R_2$ be polynomial rings on the variables corresponding to $Y_1$ and $Y_2$. By induction, there exist admissible collections of marked boxes $P_1,Q_1$ and $P_2,Q_2$ of $Y_1$ and $Y_2$ respectively such that $\depth R_i/I(Y_i) = d(P_i,Y_i)$ and $\reg R_i/I(Y_i) = r(Q_i,Y_i)$. If $\gamma = 1$ then $V = \{y_{\mu_2+1},\ldots, y_{\mu_1}\}$ is the set of free variable of $I(Y) + (x_\gamma)$. Similarly, if $\delta = 1$ then $U = \{x_{\lambda_2+1},\ldots,x_{\lambda_1}\}$ is the set of free variable of $I(Y) + (y_\delta)$. By Lemma \ref{lem_depth_variables_clearing} and definition, we see that 
\begin{align*}
    \depth S/(I(Y) + (x_\gamma)) & = \depth R_1/I(Y_1) + d_1(M_1)= d(M_1,Y),\\
    \depth S/(I(Y) + (y_\delta)) & =\depth R_2/I(Y_2) + d_1(M_2) = d(M_2,Y),
\end{align*}
where $M_1 = \{ (\gamma,\delta,r)\} \cup P_1$ and $M_2 = \{ (\gamma,\delta,c) \} \cup P_2$. Also, we have $\reg S/(I(Y) + (x_\gamma)) = \reg R_1 / I(Y_1)$ and $\reg S/(I(Y) + (y_\delta)) = \reg R_2/I(Y_2)$. The conclusion follows from induction, Theorem \ref{thm_depth_tableau}, Theorem \ref{thm_regularity_tableau}, and the fact that in any admissible collection of marked boxes of $Y$, the first marked box must be a minimal box of $Y$.    
\end{proof}

\begin{exam}
    Consider the following filling $Y$ of a Young diagram 
    \begin{center}
\begin{tabular}{ccc}
$\begin{ytableau}
*(white) 2 & *(white) 2 & *(white) 5 & *(white) 4\\
*(white) 3 & *(white) 2 & *(white) 4   \\
*(white) 4 & *(white) 6     \\
*(white) 3 
\end{ytableau}$\\[5pt]
\end{tabular}
 \end{center}
Let $I = I(Y)$. Then $M = \{ (1,1,r), (2,2,r), (4,1,r), (3,1,c),(3,2,c)\}$ is a collection of marked boxes such that 
\begin{align*}
\depth S/I(Y) & = d(M,Y) = 3,\\
\reg S/I(Y) &= r(M,Y) = 18.    
\end{align*}
\end{exam}

Finally, we have some concluding remarks. 
\begin{rem}
    \begin{enumerate}
        \item The minimal boxes are not unique. They are incomparable though, namely, if $(\gamma_1,\delta_1)$ and $(\gamma_2,\delta_2)$ are minimal boxes of $Y$ then $(\gamma_1 - \gamma_2) (\delta_1 - \delta_2) < 0$. They will be all be visited no matter what order we choose. For example, in the filling 
    \begin{center}
\begin{tabular}{ccc}
$\begin{ytableau}
*(white) 3 & *(white) 4 & *(yellow) 2 & *(white) 6 & *(white) 7\\
*(white) 4 & *(yellow) 2 & *(white) 4 & *(white) 6   \\
*(yellow) 2 & *(white) 3   \\
*(blue) 2 & *(white) 4\\
*(white) 5
\end{ytableau}$
\end{tabular}
 \end{center}
the yellow boxes are the minimal boxes. The blue box also has the minimum weight but it is not a minimal box.
\item The ordering of the weights on each row (column) matters. For example, let $Y$ be the filling in the previous remark then $\reg S/I(Y) = 27$ and $\depth S/I(Y) = 2$. If we reorder the weight on the first row to $2,3,4,6,7$ then we obtain another filling $Z$ with $\reg S/I(Z) = 25$ and $\depth S/I(Z) = 3$.
       \item The formulae show that the depth and regularity of $I(Y)$ do not depend on the characteristic of the base field $\k$.
       \item In general, the collections of marked boxes that give rise to the depth and regularity of $I(Y)$ are different.
       \item In general, the complexity of computing the depth and computing the regularity of $I(Y)$ are almost identical.
       \item All experimental computations were done in Macaulay2 \cite{M2}.
    \end{enumerate}
\end{rem}

 \section*{Acknowledgments}
 The research of    D. T. Hoang   is     partially supported by  NAFOSTED (Vietnam)   under the grant number 101.04-2023.36.

\end{document}